\definecolor{linkblue}{rgb}{0,0,.6}
\definecolor{citered}{rgb}{.7,0,0}
\newtheorem{theorem}{Theorem}[section]
\newtheorem{proposition}[theorem]{Proposition}
\newtheorem{corollary}[theorem]{Corollary}
\newtheorem{lemma}[theorem]{Lemma}
\newtheorem{definition}[theorem]{Definition}
\newtheorem{remark}[theorem]{Remark}
\newtheorem{example}[theorem]{Example}
\theoremstyle{plain}
\newcommand{\purge}[1]{} 
\newcommand{\vsp}{\vspace{2mm}}
\def\epsilon{\varepsilon}
\def\phi{\varphi}
\newcommand{\al}{\alpha}
\newcommand{\be}{\beta}
\newcommand{\ga}{\gamma}
\newcommand{\lam}{\lambda}
\newcommand{\om}{\omega}
\newcommand{\gati}{{\ti{\ga}}}
\newcommand{\pti}{{\ti{p}}}
\newcommand{\qti}{{\ti{q}}}
\newcommand{\rti}{{\ti{r}}}
\newcommand{\Cti}{{\ti{C}}}
\newcommand{\vhat}{{\hat{v}}}
\def\N{{\mathbb N}}
\def\R{{\mathbb R}}
\def\Z{{\mathbb Z}}
\newcommand{\mcE}{\mathcal E}
\newcommand{\mcH}{\mathcal H}
\newcommand{\mcI}{\mathcal I}
\newcommand{\mcM}{\mathcal M}
\newcommand{\mcN}{\mathcal N}
\newcommand{\mcR}{\mathcal R}
\newcommand{\mcMhat}{\widehat{\mathcal M}}
\newcommand{\mcNhat}{\widehat{\mathcal N}}
\newcommand{\msC}{\mathscr C}
\newcommand{\IFF}{\Leftrightarrow}
\newcommand{\ti}{\tilde}
\newcommand{\x}{\times}
\newcommand{\del}{\partial}
\newcommand{\beq}{\begin{equation}}
\newcommand{\eeq}{\end{equation}}
\newcommand{\beqs}{\begin{equation*}}
\newcommand{\eeqs}{\end{equation*}}
\DeclarePairedDelimiter{\abs}{\lvert}{\rvert}
\DeclareMathOperator{\Diff}{Diff}
\DeclareMathOperator{\Fix}{Fix}
\DeclareMathOperator{\Id}{Id}
\DeclareMathOperator{\Img}{Im}
\DeclareMathOperator{\Morph}{Morph}
\DeclareMathOperator{\Obj}{Obj}
\DeclareMathOperator{\Span}{Span}
\DeclareMathOperator{\Symp}{Symp}
\def\slashii#1{\setbox0=\hbox{$#1$}             
\dimen0=\wd0                                 
\setbox1=\hbox{\sl/} \dimen1=\wd1            
\ifdim\dimen0>\dimen1                        
\rlap{\hbox to \dimen0{\hfil\sl/\hfil}}   
#1                                        
\else                                        
\rlap{\hbox to \dimen1{\hfil$#1$\hfil}}   
\hbox{\sl/}                               
\fi}                                         %
\def\slashiii#1{\setbox0=\hbox{$#1$}#1\hskip-\wd0\hbox to\wd0{\hss\sl/\/\hss}}
\newcommand{\refglue}{Theorem \ref{glue}}
\newcommand{\refcut}{Theorem \ref{cut}}
\newcommand{\refcancel}{Lemma \ref{cancel}}
\newcommand{\refexampleNonComplete}{Example \ref{exampleNonComplete}}
\newcommand{\refgeomeDelComplete}{Lemma \ref{geomDelComplete}}
\newcommand{\refexistenceDirectLim}{Proposition \ref{existenceDirectLim}}
\newcommand{\refshortDirectLim}{Proposition \ref{shortDirectLim}}
\newcommand{\refquotientDirSys}{Lemma \ref{quotientDirSys}}
\newcommand{\refdirLimQuotient}{Corollary \ref{dirLimQuotient}}
\newcommand{\refnoChainMap}{Lemma \ref{noChainMap}}
\newcommand{\refcriterionChainCompat}{Lemma \ref{criterionChainCompat}}
\newcommand{\refnewDefLocalFH}{Definition \ref{newDefLocalFH}}
\newcommand{\refpropModSys}{Lemma \ref{propModSys}}
\newcommand{\refPropHomSys}{Proposition \ref{PropHomSys}}
\newcommand{\refthDirectLim}{Theorem \ref{thDirectLim}}
\begin{document}

\title[Homoclinic Floer homology via direct limits]{Homoclinic Floer homology via direct limits}

\author{Sonja Hohloch}

\date{\today}

\begin{abstract}
Assume $M$ to be $\R^2$ or a closed surface of genus $g \geq 1$ and $\om$ a symplectic form on $M$. Let $\phi: M \to M$ be a symplectomorphism with hyperbolic fixed point $x$ and transversely intersecting stable and unstable manifolds $W^s(\phi, x)$ and $ W^u(\phi, x)$. The intersection points $W^s(\phi, x) \cap\ W^u(\phi, x)=:\mcH(\phi, x)$ are called homoclinic points, and the (un)stable manifolds of a symplectomorphism are Lagrangian submanifolds.

For this Lagrangian intersection problem with its wildly oscillating Lagrangian manifolds and infinite number of intersection points, we introduced in earlier works Floer homologies generated by so-called (semi)primary homoclinic points and analysed their dynamical and geometric properties.

In this paper, we significantly generalise these earlier results by first defining a Floer homology generated by finite sets of contractible homoclinic points. These Floer homologies nevertheless still consider rather `local' aspects of $W^s(\phi, x) \cap\ W^u(\phi, x)$ since their generator sets are finite (but the number of contractible homoclinic points is infinite).

To overcome this issue, we construct a direct limit of these `local' homoclinic Floer homologies over suitable index sets. These direct limits accumulate the information gathered by the finitely generated `local' homoclinic Floer homologies.
\end{abstract}

\maketitle


\section{Introduction}
\label{sec:intro}

In the 1960s, V.\ I.\ Arnold conjectured that the number of fixed points of a nondegenerate Hamiltonian diffeomorphism (= a time-1 map of a nonautnomous Hamiltonian flow) on a closed symplectic manifold $(M, \om)$ is greater or equal to the sum over the Betti numbers of this manifold. For the $2n$-dimensional torus, it was proven by Conley and Zehnder in 1983. Floer \cite{floer1, floer2, floer3} turned the fixed point problem into an intersection problem which allowed him to prove it on more general classes of manifolds. More precisely, he considered the fixed points of the Hamiltonian diffeomorphism as intersection points of the graph of the Hamiltonian diffeomorphism with the diagonal in the symplectic manifold $(M \x M, \om \oplus (-\om))$. The diagonal and the graph are Lagrangian submanifolds, i.e., the symplectic form vanishes on them and they have half the dimension of the underlying manifold. The intersection points of the graph and the diagonal can be seen as critical points of the so-called symplectic action functional. Considering this functional as `Morse function', Floer devised some kind of `infinite dimensional Morse theory' for the symplectic action functional. This theory and its generalizations are nowadays called {\em Floer theory} and the associated homology theory is called {\em Floer homology}. Floer theory turned out to be a very powerful tool and gave rise to many other applications in symplectic geometry, dynamical systems, and other fields of mathematics and physics.


\subsection{History and background of homoclinic points}

In order to motivate the construction of Floer homology generated by so-called homoclinic points we first need some notations and background from homoclinic dynamics: Given a manifold $N$ and a diffeomorphism $f\in \Diff(N)$, we call $x \in N$ an {\em $m$-periodic point} if there exists $m \in \N^{\geq 1}$ with $f^m(x)=x$. For $m=1$, such a point $x$ is usually called a {\em fixed point} and the set of fixed points of $f$ is denoted by $\Fix(f)$.
A fixed point $x$ is {\em hyperbolic} if the absolute values of the eigenvalues of the derivative $Df(x)$ of $f$ in $x$ differs from $1$. The {\em stable manifold} of a hyperbolic fixed point $x$ is defined as
$$W^s(f,x):= \{p \in N \mid\lim_{n \to \infty} f^n(p)=x\}$$ 
and the {\em unstable manifold} as 
$$W^u(f,x):= \{p \in N \mid\lim_{n \to -\infty} f^n(p)=x\}.$$
We call the connected components of $W^s(f,x) \backslash \{x\}$ resp.\ $W^u (f,x) \backslash \{x\}$ the {\em branches} of $W^s(f,x)$ resp.\ $W^u(f,x)$. 
The {\em homoclinic points of $x$} are the intersection points of the stable and unstable manifold of $x$. The set of homoclinic points of $x$ is denoted by 
$$\mcH(f,x) := W^s(f,x) \cap  W^u(f,x).$$
In this convention, we consider the fixed point $x$ also as homoclinic point.
In Figure \ref{tangle}, the complicated intersection behaviour of transversely intersecting stable and unstable manifolds is sketched. It is often referred to as `homoclinic tangle'.

\begin{figure}[h]
\begin{center}

\input{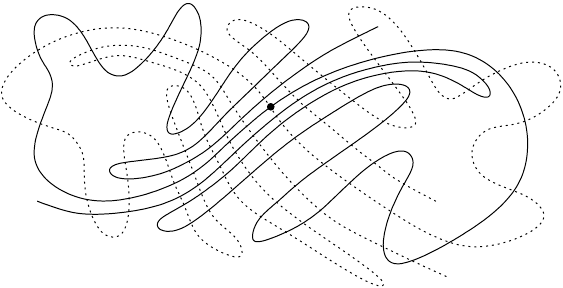_t}

\caption{The intersection behaviour (`homoclinic tangle') of transversely intersecting stable (= continuous line) and unstable (= dotted line) manifold of a hyperbolic fixed point (= bold black dot).}
\label{tangle}

\end{center}
\end{figure}

The {\em orbit} of a point $p \in N$ is given by the set $\{ f^n(p) \mid n \in \Z\}$. Note that the stable and unstable manifolds are invariant under the action
$$\Z \x N \to N, \quad (m, p) \mapsto f^m(p)$$
such that being periodic or homoclinic is in fact a property shared by all points in the orbit.

Intuitively, homoclinic points are the `next more complicated' orbit type after fixed points and periodic points. (Transverse) homoclinic points were discovered by Poincar\'e \cite{poincare1}, \cite{poincare2} around 1890 while studying the $n$-body problem. In 1935, Birkhoff \cite{birkhoff} described the existence of high-periodic points near homoclinic ones, but only Smale's horseshoe in the 1960s explained the implied dynamics precisely. Although homoclinic points have been studied by now by various methods like calculus of variations, perturbation theory, and numerical approximation, many questions are still open.


\subsection{Challenges concerning Floer homology generated by homoclinic points}

If we work with symplectomorphisms instead of diffeomorphisms, the (un)stable manifolds are in fact Lagrangian submanifolds, i.e., the symplectic form vanishes along the (un)stable manifolds and their dimensions equal half of the dimension of the underlying symplectic manifold. This prompts the question if it is possible to construct a Floer homology generated by homoclinic points, i.e., if the intersection points of the Lagrangian (un)stable manifolds admit the construction of Floer homology.

The main problem is the abundance of homoclinic points due to the wild oscillation and accumulation behaviour of the only injectively immersed (un)stable manifolds. This causes tremendous problems in particular for the welldefinedness of the Floer boundary operator which relies, among others, on Fredholm analysis and regularity of solutions of a Cauchy-Riemann type PDE (so-called pseudoholomorphic curves).

If we restrict the dimension of the underlying symplectic manifold to two, we may replace the involved analysis of pseudoholomorphic curves by combinatorics (see de Silva \cite{de silva}, de Silva $\&$ Robbin $\&$ Salamon \cite{de silva-robbin-salamon}, Felshtyn \cite{felshtyn}, Gautschi $\&$ Robbin $\&$ Salamon \cite{gautschi-robbin-salamon}). Note that, in dimension two, being symplectic is the same as being volume preserving w.r.t.\ the symplectic form and, moreover, that any one dimensional submanifold of a two dimensional symplectic manifold is Lagrangian.

While focussing on two dimensional symplectic manifolds avoids all analysis troubles, the problems caused by the infinite number of homoclinic points persist: Homoclinic points are intended to play the role of generators of a chain complex. Thus the infinite number of homoclinic points makes defining chain groups of finite rank a tricky task, not to speak of the welldefinedness of the boundary operator between the chain groups\dots

Now let $(M, \om)$ be a two dimensional symplectic manifold, $\phi: (M, \om) \to (M, \om)$ a symplectomorphism, and $x$ a hyperbolic fixed point of $\phi$. Given $p$, $q \in \mcH:= \mcH(\phi, x)$, denote by $[p,q]_i $ the (one dimensional!) closed segment between $p$ and $q$ in $W^i:=W^i(\phi, x)$ for $i \in \{s,u\}$.
Let $c_p: [0,1] \to W^u \cup W^s$ be a continuous curve with $c_p(0)=x=c_p(1)$ that runs first from $x$ through $[x,p]_u$ to $p$ and then through $[p,x]_s$ back to $x$.
The homotopy class of $p$ is given by $[p]:= [c_p] \in \pi_1(M, x)$. Then
$$\mcH_{[x]}:=\{p \in \mcH \mid [p] =[x] \}$$
is said to be the set of {\em contractible} homoclinic points. It is invariant under the $\Z$-action induced by $\phi$. We define
\begin{align*}
 & \mcH_{pr}:=\mcH_{pr}(\phi, x):= \{ p \in \mcH_{[x]} \mid \ ]p,x[_s \cap\ ]p,x[_u \cap\ \mcH_{[x]} = \emptyset \} , \\
 & \mcH_{s}:=\mcH_{s}(\phi, x):= \{ p \in \mcH_{[x]} \mid \ ]p,x[_s \cap\ ]p,x[_u  = \emptyset \}
\end{align*}
and the elements of $\mcH_{pr}$ are referred to as {\em primary} homoclinic points and those of $\mcH_{s}$ as {\em semiprimary} homoclinic points.
Geometrically, homoclinic points are primary if getting from $p$ to $x$ via the stable and unstable manifold has no  contractible intersection points. They are semiprimary if the ways back do not intersect at all.

Note that $\abs{\mcH_{pr}\slash \Z} < \infty$ and $\abs{\mcH_{s}\slash \Z} < \infty$, i.e., modulo $\Z$-action of $\phi$, the sets of primary and semiprimary points are finite.
This was the motivation to construct Floer homology in dimension two generated by (semi)primary homoclinic points in our earlier work \cite{hohloch1}. Note that these (semi)primary points form a sort of `skeleton' within the intersecting stable and unstable manifolds and the associated (semi)primary homoclinic Floer homologies have interesting links to known quantities in dynamical systems, geometry, and algebra, see Hohloch \cite{hohloch2, hohloch3}.


\subsection{New approach via local homoclinic Floer homology and direct limit}

Assume $M$ to be $\R^2$ or a closed surface of genus $g \geq 1$ and $\om$ a symplectic form on $M$. Let $\phi: (M, \om) \to (M, \om)$ be a symplectomorphism, and $x$ a hyperbolic fixed point of $\phi$ with transversely intersecting stable and unstable manifolds $W^s(\phi, x) \cap\ W^u (\phi, x) =: \mcH(\phi, x)=:\mcH$.

The aim of this paper is to define a Floer homology for significantly larger and more general sets of homoclinic points than the set of (semi)primary homoclinic points used in Hohloch \cite{hohloch1} to define (semi)primary Floer homology.

To construct a Floer homology, we first of all need chain groups to set up the chain complex. The general idea in this paper is to take a finite set of contractible homoclinic points and figure out how to generalize the construction from Hohloch \cite{hohloch1} to this much more general setting.

Consider the set of contractible homoclinic points $\mcH_{[x]}$ (which contains the fixed point $x$). It can be endowed (see Section \ref{secMaslov} for details) with the Maslov grading $\mu: \mcH_{[x]} \to \Z$ and natural convention $\mu(x)=0$.
Set
$$\mcE:= \{E \subset \mcH_{[x]} \mid E \mbox{ finite} \}$$
and let $E \subset \mcE$. Then the Maslov index induces a grading on the elements of $E$ so that the free abelian group $\Z E:= \Span_\Z\{p \mid p \in E\}$ generated by the elements of $E$ can be written as
$$\Z E = \bigoplus_{k \in \Z} \Z E_k$$
with $E_k:=\{ p \in E \mid \mu(p)=k\}$. Since $E$ is finite, only finitely many of the $E_k$ are nontrivial. Hence our candidates for the chain groups of the desired homoclinic Floer chain complex generated by $E$ are
$$
C(E):= \Z E = \bigoplus_{k \in \Z} \Z E_k =:\bigoplus_{k \in \Z} C_k(E).
$$
Now we need to define a boundary operator between the chain groups.
Let $E \in \mcE$ and consider $p, q \in E$ with $\mu(p)-1=\mu(q)$. Classical Lagrangian Floer homology would now count the number of certain pseudoholomorphic strips from $p$ to $q$, but, on two dimensional manifolds, one may replace the analysis by combinatorics (see de Silva \cite{de silva}, de Silva $\&$ Robbin $\&$ Salamon \cite{de silva-robbin-salamon}, Felshtyn \cite{felshtyn}, Gautschi $\&$ Robbin $\&$ Salamon \cite{gautschi-robbin-salamon}) and work with certain `compatible signs' $n(p,q) \in \{-1, 0, +1\}$ instead. For the precise definition of $n(p,q)$ see Section \ref{secSigns} in general and Equation \eqref{signs} in particular.
This suggests to define as candidate for the boundary operator
\begin{equation*}
 \del^E : \Z E \to \Z E, \qquad \del^E p:= \sum_{\stackrel{q \in E}{\mu(q) = \mu(p)-1}} n(p,q) \ q
\end{equation*}
 on the generators and to extend it to $\Z E$ in a linear way. $\del^E$ maps $\Z E_k$ to $\Z E_{k-1}$ and thus gives rise to a family $\del^E_k : C_k(E) \to C_{k-1}(E)$ for all $k \in \Z$. To be a boundary operator, $\del^E$ needs to satisfy $\del^E_{k-1} \circ \del^E_k =0$ for all $k \in \Z$, briefly $\del^E \circ \del^E =0$. Unfortunately there are sets $E \in \mcE$ where this is simply not true, see \refexampleNonComplete, item \ref{notComplete}.

 To overcome this problem, we proceed as follows: a set $E \in \mcE$ is said to be {\em $\del$-complete} if $\del^E \circ \del^E =0$, and otherwise $\del$-incomplete. Now set
 $$
 \overline{\mcE}:= \{E \in \mcE \mid E \ \del\mbox{-complete}\}.
 $$
Then we obtain (for details see Section \ref{sec:localFH}):

\begin{theorem}
\label{th:introFH}
For all $E \in  \overline{\mcE}$, setting
$$
\bigl( C(E), \del^E\bigr):= \bigl( \Z E, \del^ E \bigl)
$$
yields a welldefined chain complex. Its induced homology is called {\em (local) homoclinic Floer homology} of $E$ and denoted by
$$
H(E)= \bigoplus_{k \in \Z} H_k(E) : = \bigoplus_{k \in \Z} \ker \del^E_k \slash \Img \del^E_{k+1}
$$
\end{theorem}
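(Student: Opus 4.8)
The plan is to treat Theorem \ref{th:introFH} as the verification that the proposed data $\bigl(\Z E, \del^E\bigr)$ really does satisfy the axioms of a $\Z$-graded chain complex, the only nonformal input being the defining property of $\overline{\mcE}$. Concretely, three things must be checked: that $\del^E$ is a well-defined endomorphism of $\Z E$; that it is homogeneous of degree $-1$ with respect to the Maslov grading, so that it assembles into maps $\del^E_k : C_k(E) \to C_{k-1}(E)$; and that $\del^E \circ \del^E = 0$, from which the homology groups are well-defined quotients.

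For the first point I would fix a generator $p \in E$ and note that, since $E$ is finite, the index set $\{\, q \in E \mid \mu(q) = \mu(p) - 1 \,\}$ is finite, so that $\del^E p = \sum_q n(p,q)\, q$ is a finite $\Z$-linear combination of elements of $E$ and therefore lies in $\Z E$; the coefficients $n(p,q) \in \{-1,0,+1\}$ are the compatible signs whose construction — and whose independence of the auxiliary choices entering it — is supplied in Section \ref{secSigns}, Equation \eqref{signs}. Extending $\del^E$ $\Z$-linearly over the free abelian group $\Z E$ is then automatic and unambiguous. For the second point I observe that every term $n(p,q)\, q$ produced from $p \in E_k$ satisfies $\mu(q) = k-1$, i.e.\ $q \in E_{k-1}$; since $E$ is finite, only finitely many $E_k$ are nontrivial, and $\del^E$ restricts to homomorphisms $\del^E_k : \Z E_k = C_k(E) \to \Z E_{k-1} = C_{k-1}(E)$, so $\del^E$ indeed has Maslov degree $-1$.

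The third point is where the hypothesis enters: by the definition of $\overline{\mcE}$, the assumption $E \in \overline{\mcE}$ means precisely that $\del^E \circ \del^E = 0$, equivalently $\del^E_{k-1} \circ \del^E_k = 0$ for every $k \in \Z$. Hence $\Img \del^E_{k+1} \subseteq \ker \del^E_k$ as subgroups of the finitely generated free abelian group $C_k(E)$, so the quotient $H_k(E) := \ker \del^E_k \slash \Img \del^E_{k+1}$ is a well-defined abelian group for each $k$, and $H(E) := \bigoplus_{k \in \Z} H_k(E)$ is the local homoclinic Floer homology of $E$. This is exactly the content of the theorem.

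I do not expect a genuine obstacle in the argument itself, since once $n(p,q)$ and $\del$-completeness are in hand the statement is purely formal; the one place deserving real care is isolating the claim that the signs $n(p,q)$ — hence the operator $\del^E$ — do not depend on the choices made in their construction, which is the actual substance and is carried out in Section \ref{secSigns}. The companion issues, namely that $\del$-incomplete $E$ genuinely occur (so that passing to $\overline{\mcE}$ is not merely cosmetic) and that $\overline{\mcE}$ is nevertheless rich enough to be useful, are not part of this statement: the first is exhibited in \refexampleNonComplete and the second is the point of the subsequent direct-limit construction.
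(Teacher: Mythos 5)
Your proposal is correct and follows essentially the same route as the paper: Section \ref{sec:localFH} establishes welldefinedness of $\del^E$ from finiteness of $E$ and $n(p,q)\in\{\pm 1,0\}$, notes the degree $-1$ decomposition $\del^E_k:\Z E_k\to\Z E_{k-1}$, and then invokes $\del$-completeness (the defining property of $\overline{\mcE}$) to get $\del^E_{k-1}\circ\del^E_k=0$ and hence a chain complex with welldefined homology. One small refinement: the operator $\del^E$ itself does depend on the orientation choice (it can flip to $-\del^E$), but as the paper's remark after Definition \ref{newDefLocalFH} notes, kernel and image are unchanged, so the homology is independent of that choice.
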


Note that representatives of the generators of (semi)primary homoclinic Floer homology form a $\del$-complete set, so (semi)primary homoclinic Floer homology as defined in Hohloch \cite{hohloch1} is included in the statement above.

There is also a way to assign a Floer homology to $\del$-incomplete sets as proven in Section \ref{sec:pruning}:

\begin{proposition}
Let $E \in \mcE $. Then the pruning algorithm (see Section \ref{sec:pruning}) turns $E$ into a $\del$-complete set $\overline{E} \subseteq E$. If $E$ is already $\del$-complete then $\overline{E} = E$, otherwise $\overline{E} \subset E$ is a strict subset. Thus $H(E):=H(\overline{E})$ is welldefined and coincides for $\del$-complete sets with the definition in Theorem \ref{th:introFH}.
\end{proposition}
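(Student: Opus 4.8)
The plan is to read off the claimed properties directly from the construction of the pruning algorithm in Section~\ref{sec:pruning}. Recall the shape of that construction: given $E \in \mcE$, one tests whether $\del^E \circ \del^E = 0$; if so, the algorithm returns $E$ unchanged, and if not, it locates the nonempty set of generators responsible for the failure, deletes them to obtain a strictly smaller finite set $E' \subsetneq E$, and then iterates on $E'$. What has to be established is that this process (i) terminates with a $\del$-complete set, (ii) collapses the two boundary cases as asserted, and (iii) produces an output $\overline{E}$ that does not depend on whatever choices the algorithm leaves open.

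For (i), termination is immediate: each pruning step strictly decreases the finite nonnegative integer $\abs{E}$, so after at most $\abs{E}$ steps the iteration halts, and by the halting criterion the terminal set $\overline{E}$ satisfies $\del^{\overline{E}} \circ \del^{\overline{E}} = 0$, i.e.\ $\overline{E} \in \overline{\mcE}$. Theorem~\ref{th:introFH} then applies to $\overline{E}$ and produces the chain complex $(C(\overline{E}), \del^{\overline{E}})$ together with its homology $H(\overline{E})$, so the definition $H(E) := H(\overline{E})$ makes sense. One point deserves care throughout: the boundary operator must be recomputed at each stage, since the sum defining $\del^{E'}$ ranges only over $q \in E'$. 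The individual signs $n(p,q)$ are intrinsic to the pair $(p,q)$ and to $W^s$, $W^u$ and are unaffected by shrinking the ambient set, but $\del$-completeness is monotone in neither direction --- a subset of a $\del$-complete set may fail to be $\del$-complete --- and this is exactly why passing to a carefully chosen subset can repair $\del$-incompleteness.

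For (ii): if $E$ is already $\del$-complete, the halting criterion is met at the first test, nothing is deleted, $\overline{E} = E$, and hence $H(E) := H(\overline{E})$ is literally $H(E)$ in the sense of Theorem~\ref{th:introFH}, which is the claimed compatibility. If $E$ is $\del$-incomplete, the first pruning step removes a nonempty set of generators, so $\abs{\overline{E}} \le \abs{E} - 1$ and $\overline{E} \subsetneq E$ is a strict subset.

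Part (iii) is the step I expect to carry the real weight. If the algorithm as defined deletes at each stage a set of generators that is canonically attached to $E$ --- for instance all $p$ with $(\del^E\circ\del^E)p \ne 0$ together with their uniquely determined partners --- then there are no choices, $\overline{E}$ is a function of $E$ alone, and nothing further is needed. If instead the algorithm admits latitude (the order in which obstructions are processed, or which of several competing generators is removed first), the plan is to prove a confluence statement: any two maximal sequences of admissible pruning moves starting from $E$ terminate at the same subset. The route is a local diamond lemma --- if two distinct single moves are available at an intermediate set $F$, yielding $F_1$ and $F_2$, exhibit further pruning sequences out of $F_1$ and out of $F_2$ that meet --- and then Newman's lemma, with the well-founded order given by the cardinality $\abs{F}$ (which every move strictly decreases), upgrades local to global confluence. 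The genuinely delicate verification is the local diamond property itself: one must check that an obstruction untouched by a given move persists, possibly in altered form, as an obstruction after that move, and here the combinatorial description of $n(p,q)$ from Section~\ref{secSigns}, together with the decomposition into the pieces $E_k$ of fixed Maslov index, is what feeds into the bookkeeping.
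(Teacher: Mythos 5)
Your parts (i) and (ii) are fine as far as they go, but the proof has to be about the specific algorithm of Section~\ref{sec:pruning}, and your reconstruction of it is a different algorithm. The paper's procedure does not test $\del^E\circ\del^E=0$ as a halting criterion and delete ``the generators responsible for the failure'' (your hedge ``all $p$ with $(\del^E\circ\del^E)p\neq 0$ together with their partners'' would delete top-level points); it sweeps level by level through triples $(p,q,r)$ of consecutive Maslov index and deletes the \emph{middle} point $q$ of any heart whose cutting partner $q'$ (unique by Theorem~\ref{cut}) is absent. Two consequences: first, since different deletion rules produce different outputs, ``the output is canonically attached to $E$, nothing further is needed'' is not available --- the well-definedness claim must be proved for this rule. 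Second, $\del$-completeness of the terminal set is \emph{not} immediate from a halting criterion: one must check that deletions performed while processing lower Maslov levels do not destroy the completeness already arranged at higher levels. The paper handles this by noting that for cutting partners $q,q'$ in a heart over $(p,r)$, the contribution to $\del\circ\del(p)$ is $r-r=0$ before $r$ is deleted and $0-0=0$ afterwards, so the cancellation survives; together with Lemma~\ref{geomDelComplete} this gives $\overline{E}\in\overline{\mcE}$. Your proposal contains no substitute for this step.

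The real weight, as you yourself say, sits in part (iii), and there the proposal stops at a plan. Newman's lemma with $\abs{F}$ as the well-founded measure is a perfectly reasonable framework, but the local diamond property is precisely the nontrivial content, and you never verify it --- ``one must check that an obstruction untouched by a given move persists'' is the statement of the problem, not its solution. The paper's uniqueness argument is exactly this verification, done by hand: using the uniqueness of cutting partners from Theorem~\ref{cut}, it runs through the three possible processing orders in the configuration where $q,q'$ are cutting partners in a heart over $(p,r)$ while $q$ also lies in a heart over $(\pti,\rti)$ with no cutting partner $\qti$, and shows that in every order first $q$ and then, in the next sweep, $q'$ are deleted, so the terminal set is the same. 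Until you supply this (or an equivalent) confluence check, the independence of $\overline{E}$ from the processing order, and hence the well-definedness of $H(E):=H(\overline{E})$, is not established.
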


Now we want to overcome the still `very local' nature of the Floer homology defined in Theorem \ref{th:introFH} in the sense that we want to see it as one in a sequence of homologies that gather more and more information about the underlying homoclinic tangle. We will do so by defining a suitable index set over which we can take the direct limit of local homoclinic Floer homologies.

For the setting of a direct limit we need a partially ordered index set and a so-called direct system consisting of objects of a category together with transition functions, both indexed by the index set. Moreover, when compatibility with short exact sequences is necessary or wished then the index set should in addition be directed. For precise definitions, we refer the reader to Section \ref{sec:backgroundHomAlg}.

Intuitively, our candidate for the index set should be $\overline{\mcE}$ with partial order induced by the inclusion $\subseteq$ since we would like to take the direct limit over $H(E)$ for all $E \in \overline{\mcE}$. From the inclusions $ D \hookrightarrow E$ for $D, E \in \overline{\mcE}$ with $D \subseteq E$ and their extension to maps $\mcI^{DE}: \Z D \hookrightarrow \Z E$, the transition functions $H( \mcI^{DE}): H(D) \to H(E)$ in this setting would arise.

Unfortunately this does not work without further conditions since $\mcI^{DE}: \Z D \hookrightarrow \Z E$ is an inclusion of graded $\Z$-modules but not necessarily a chain map, i.e., it does not always satisfy $\mcI^{DE} \circ \del^D = \del^E \circ \mcI^{DE}$, see \refnoChainMap.

To overcome this problem, we proceed as follows: a pair $(D, E) $ with $D, E \in \overline{\mcE}$ and $D \subseteq E$ is called {\em chain compatible} if $\mcI^{DE} \circ \del^D = \del^E \circ \mcI^{DE}$. Moreover, a subset $\ti{\mcE} \subset  \overline{\mcE}$ is {\em chain compatible} if, for all $D, E \in \ti{\mcE}$ with $D \subseteq E$, the pair $(D, E)$ is chain compatible.

\begin{proposition}
Let $D, E \in \overline{\mcE}$ with $D \subseteq E$ and let $(D, E)$ be chain compatible.
Then the chain map $\mcI^{DE}: \Z D \to \Z E$ gives rise to a map in homology $H(\mcI^{DE}): H(D) \to H(E)$.
Moreover, for all $ D, E, F \in \overline{\mcE}$ with $D \subseteq E \subseteq F$ and $(D, E)$, $(E, F)$, $(D, F)$ chain compatible, we have
$$H(\mcI^{EF}) \circ H(\mcI^{DE}) = H(\mcI^{DF}).$$
\end{proposition}

This is proven in \refPropHomSys. Now we are ready to pass to the direct limit:

\begin{theorem}
Let $\tilde{\mcE} \subset (\overline{\mcE}, \subseteq)$ be chain compatible. Then, for all $k \in \Z$,
 \begin{align*}
 H^{\ti{\mcE}}_k& := \bigl( \{ H_k(D) \mid D   \in \tilde{\mcE}\} , \{ H(\mcI^{DE}_k) \mid D, E \in \tilde{\mcE}, \ D \subseteq E \} \bigr), \\
 H^{\ti{\mcE}}& := \bigl( \{ H(D) \mid D   \in \tilde{\mcE}\} , \{ H(\mcI^{DE}) \mid D, E \in \tilde{\mcE}, \ D \subseteq E \} \bigr)
\end{align*}
 with (partially ordered) index set $\tilde{\mcE}$ are direct systems of graded $\Z$-modules in the category of $\Z$-moduls. In particular, both are direct systems of abelian groups and therefore admit direct limits.
 If the chain compatible set $\tilde{\mcE}\subset (\overline{\mcE}, \subseteq)$ is not only partially ordered but in fact directed then both direct systems allow passing to the direct limit of short exact sequences and we can assign a welldefined homology via direct limit to $\ti{\mcE}$:
$$
H_k(\ti{\mcE}):= \varinjlim H_k^{\ti{\mcE}} \quad \forall\ k \in \Z \qquad \mbox{and} \qquad H(\ti{\mcE}):= \bigoplus_{k \in \Z} H_k(\ti{\mcE}).
$$
\end{theorem}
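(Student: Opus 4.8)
The plan is to deduce the statement from standard homological algebra over $\Z$ (collected in Section~\ref{sec:backgroundHomAlg}), the only substantive input being the preceding Proposition (\refPropHomSys): for every chain compatible pair $(D,E)$ with $D\subseteq E$ the inclusion $\mcI^{DE}$ is a chain map, hence induces $H(\mcI^{DE})\colon H(D)\to H(E)$, which respects the Maslov grading since $\mcI^{DE}$ maps $\Z D_k$ into $\Z E_k$; moreover these induced maps compose correctly whenever the three pairs involved are chain compatible.

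\textbf{Step 1: the direct system axioms.} Fix a chain compatible $\ti{\mcE}\subset(\overline{\mcE},\subseteq)$. For $D\subseteq E$ in $\ti{\mcE}$ the pair $(D,E)$ is chain compatible, so $H(\mcI^{DE})$, and degreewise $H(\mcI^{DE}_k)$, are well-defined module homomorphisms. I would then check the two axioms of a direct system: first, $\mcI^{DD}$ is the identity chain map, so $H(\mcI^{DD})=\Id_{H(D)}$; second, for $D\subseteq E\subseteq F$ in $\ti{\mcE}$ all three pairs $(D,E)$, $(E,F)$, $(D,F)$ are chain compatible --- this is precisely where chain compatibility of the \emph{whole} family $\ti{\mcE}$, not merely of single pairs, is used --- so \refPropHomSys\ gives $H(\mcI^{EF})\circ H(\mcI^{DE})=H(\mcI^{DF})$. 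Since all these maps are graded, $H^{\ti{\mcE}}$ is a direct system of graded $\Z$-modules over the poset $\ti{\mcE}$, and $H^{\ti{\mcE}}_k$ is its degree-$k$ component.

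\textbf{Step 2: existence of the direct limits.} The category of $\Z$-modules, and likewise of $\Z$-graded $\Z$-modules, is cocomplete, so the colimit of any poset-indexed direct system exists (explicitly $\varinjlim H^{\ti{\mcE}}=\bigl(\bigoplus_{D\in\ti{\mcE}}H(D)\bigr)/N$ with $N$ generated by the elements $a-H(\mcI^{DE})(a)$), regardless of whether the poset is directed. As every transition map is graded, $N$ is graded and $\bigl(\varinjlim H^{\ti{\mcE}}\bigr)_k=\varinjlim H^{\ti{\mcE}}_k$; forgetting the grading, these are also direct limits of abelian groups. This gives the first assertion.

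\textbf{Step 3: the directed case and short exact sequences.} Now suppose $\ti{\mcE}$ is directed, and recall that filtered colimits are exact in the category of $\Z$-modules. The chain complexes $(\Z D,\del^D)_{D\in\ti{\mcE}}$ with the chain maps $\mcI^{DE}$ form a direct system of chain complexes; since each $\mcI^{DE}$ is a chain map it restricts to maps of cycles $\ker\del^D_k\to\ker\del^E_k$ and of boundaries $\Img\del^D_{k+1}\to\Img\del^E_{k+1}$, so these define direct subsystems $Z_k^{\ti{\mcE}}$ and $B_k^{\ti{\mcE}}$ of $C_k^{\ti{\mcE}}:=(\{\Z D_k\},\{\mcI^{DE}_k\})$, yielding short exact sequences of direct systems
\[0\to Z_k^{\ti{\mcE}}\to C_k^{\ti{\mcE}}\xrightarrow{\del_k}B_{k-1}^{\ti{\mcE}}\to 0,\qquad 0\to B_k^{\ti{\mcE}}\to Z_k^{\ti{\mcE}}\to H_k^{\ti{\mcE}}\to 0.\]
Applying exactness of $\varinjlim$ to these is exactly the asserted compatibility with short exact sequences; it gives $\varinjlim Z_k^{\ti{\mcE}}=\ker(\varinjlim\del_k)$, $\varinjlim B_{k-1}^{\ti{\mcE}}=\Img(\varinjlim\del_k)$, and therefore $\varinjlim H_k^{\ti{\mcE}}\cong\ker(\varinjlim\del_k)/\Img(\varinjlim\del_{k+1})$, i.e.\ the direct limit of the local homoclinic Floer homologies is the homology of the limit chain complex $(\varinjlim C_k^{\ti{\mcE}},\varinjlim\del_k)$, which is indeed a complex because $\varinjlim$ turns $\del^D\circ\del^D=0$ into $(\varinjlim\del)\circ(\varinjlim\del)=0$. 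One then sets $H_k(\ti{\mcE}):=\varinjlim H_k^{\ti{\mcE}}$ and $H(\ti{\mcE}):=\bigoplus_{k\in\Z}H_k(\ti{\mcE})$, which is well defined up to canonical isomorphism.

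\textbf{Expected main obstacle.} Conceptually nothing deep remains after \refPropHomSys; the rest is bookkeeping. The one point that needs genuine care is Step 3, namely checking that the transition maps really do restrict to the cycle and boundary subsystems --- so that $Z_k^{\ti{\mcE}}$ and $B_k^{\ti{\mcE}}$ are direct \emph{systems} in the first place --- and then invoking exactness of filtered colimits correctly. This is also the step that fails for a merely partially ordered, non-directed $\ti{\mcE}$, which is why the second half of the theorem requires directedness.
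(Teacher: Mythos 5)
Your proposal is correct and follows essentially the same route as the paper: chain compatibility plus \refPropHomSys\ give the direct systems, the transition maps restrict to cycles and boundaries, and the short exact sequences of direct systems together with exactness of directed colimits (\refshortDirectLim, \refdirLimQuotient) yield the well-defined limit homology. Your additional identification of $\varinjlim H_k^{\ti{\mcE}}$ with the homology of the limit chain complex via the sequence $0\to Z_k^{\ti{\mcE}}\to C_k^{\ti{\mcE}}\to B_{k-1}^{\ti{\mcE}}\to 0$ is a harmless (and correct) refinement beyond what the paper records.
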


This is proven in \refthDirectLim.
Since a direct limit is by construction `quite small' compared to the vastness of the set of homoclinic points there is still much to discover within and around homoclinic tangles!



\subsection{Organization of this paper}
Section \ref{sec:intro} consists of the introduction. Section \ref{sec:FloerStuff} recalls the necessary background from combinatorial Floer homology. Section \ref{sec:localFH} contains the construction and discussion of (local) homoclinic Floer homology for finite sets of contractible homoclinic points. Section \ref{sec:backgroundHomAlg} summarizes necessary notions from homological algebra and category theory. Section \ref{sec:directLimFH} contains the discussion and construction concerning the direct limit of (local) homoclinic Floer homologies.


\subsection*{Acknowledgements}
The author wishes to thank Wendy Lowen and Julia Ramos for helpful explanations and references. The research for this article was partially funded by the
UA BOF DocPro4 project with Antigoon-ID 31722,
the FWO-FNRS EoS project G0H4518N titled {\em Symplectic techniques in differential geometry},
and FWO-FNRS EoS project 40007524 titled {\em Beyond symplectic geometry}.


\section{Maslov index, cutting and gluing, and orientations}
\label{sec:FloerStuff}


Assume $M$ to be $\R^2$ or a closed surface of genus $g \geq 1$ and $\om$ a symplectic form on $M$. Note that, in dimension two, being a symplectic form is the same as being a volume form. Consider $\phi \in \Symp(M, \om)$ with hyperbolic fixed point $x$ and transversely intersecting (un)stable manifolds $W^s:=W^s(\phi, x)$ and $W^u:=W^u(\phi, x)$. This implies in particular that the (un)stable manifolds are {\em one dimensional}. Abbreviate by 
$$\mcH:= \mcH(\phi, x) :=W^s(\phi, x) \cap W^u(\phi, x)$$ 
the set of homoclinic points which includes the fixed point $x$. The symplectomorphism $\phi$ induces the $\Z$-action
$$\Z \x \mcH \to \mcH, \qquad (n, p) \mapsto \phi^n(p)$$ 
on the {\em set of homoclinic points} of $x$. Note that for transversely intersecting $W^s \cap W^u$, the sets $\mcH$ and $\mcH \slash \Z$ are both infinite as a simple glance at Figure \ref{tangle} shows. 


\subsection{Maslov index and grading}
\label{secMaslov}

Given $p$, $q \in \mcH$, denote by $[p,q]_i $ the (one dimensional!) closed segment between $p$ and $q$ in $W^i$ for $i \in \{s,u\}$.
Let $c_p: [0,1] \to W^u \cup W^s$ be a continuous curve with $c_p(0)=x=c_p(1)$ that runs first from $x$ through $[x,p]_u$ to $p$ and then through $[p,x]_s$ back to $x$.
The homotopy class of $p$ is given by $[p]:= [c_p] \in \pi_1(M, x)$. Then $\mcH_{[x]}:=\{p \in \mcH \mid [p] =[x] \}$ is said to be the set of {\em contractible} homoclinic points. It is invariant under the action of $\phi$.

In Hohloch \cite[Section 2.1]{hohloch1}, we showed that there is a (relative) {\em Maslov index} $\mu(p, q) \in \Z$ for $p$, $q \in \mcH$ if $[p] =[q]$. Intuitively, it can be seen as follows:
If we assume the intersections of the Lagrangians to be perpendicular and if we flip $+90^\circ$ at the `vertex' $q$ and $-90^\circ$ at the `vertex' $p$ we can identify $\mu(p,q)$ in our two dimensional setting with twice the winding number of the unit tangent vector of a loop starting in $p$, running through $[p,q]_u$ to $q$ and through $[p,q]_s$ back to $p$. 
Moreover, we have $\mu(p, q)= \mu(\phi^n(p),\phi^n(q))$ for $n \in \Z$ and $p,q \in \mcH_{[x]}$.
For contractible homoclinic points, the (relative) Maslov index yields a natural {\em grading} 
$$\mu: \mcH_{[x]} \to \Z, \qquad  \mu(p):=\mu(p,x)$$ 
where we use the convention $\mu(x)=\mu(x,x)=0$.
Thus we get for contractible homoclinic points $p$ and $q$
\begin{equation*}
\mu(p,q)= \mu(p,x) +\mu(x,q)=\mu(p,x)- \mu(q,x)= \mu(p)-\mu(q).
\end{equation*}
If $p$ and $q$ have the same homotopy class but are not contractible, i.e.\ $[p]=[q] \neq [x]$, then we still can define a grading within this homotopy class by picking a base point $x_0$ with $[x_0]=[p] = [q]$, but there is no `natural' choice for $x_0$. Changing the base point will change the grading by a constant integer. For noncontractible homoclinic points, the relative Maslov index needs not be preserved under iteration.


\subsection{Immersions, di-gons and hearts}

In this section, we recall the definitions and results of Hohloch \cite[Section 2.2]{hohloch1}.

A {\em di-gon} is the polygon $D\subset \R^2$ sketched in Figure \ref{digon} (a), having precisely two vertices, both convex and located at $(-1,
0)$ and $(1,0)$. Denote its upper boundary by
$B_s$ and its lower boundary by $B_u$.
A {\em heart} is either the polygon $D_b $ of Figure \ref{digon} (b) or the
polygon $D_c$ of Figure \ref{digon} (c). It is characterized by two
vertices at $(-1,0)$ and $(1,0)$, one is convex and one concave. Denote
their upper boundaries by $B_s$ and their lower boundaries by $B_u$.

\begin{figure}[h]
\begin{center}

\input{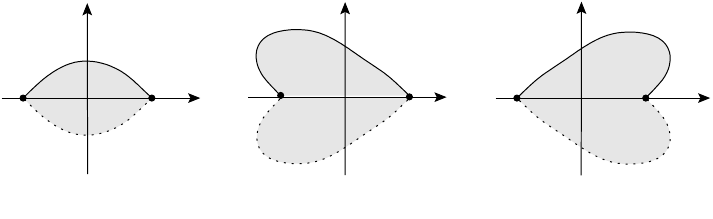_t}

\caption{Di-gon and hearts.}
\label{digon}

\end{center}
\end{figure}

In the following, immersions of di-gons and hearts are to be immersions also on
the boundaries and vertices. This means in particular that the image of a small neighbourhood of a convex (concave) vertex of a
polygon is a wedge-shaped region with angle smaller (larger) than $\pi$.
Let $p$, $q \in \mcH$ with $\mu(p,q)=1$. We define
$\mcM(p,q)$ to be the {\em space of smooth, immersed di-gons} $v : D \to M$
that are orientation preserving and satisfy
$v(B_u) \subset W^u$, $v(B_s) \subset W^s$, 
$v(-1,0)=p$ and $v(1,0)=q$.
Now denote by $G(D)$ the group of orientation preserving diffeomorphisms of $D$
which preserve the vertices. Then we call $\mcMhat(p,q):=\mcM(p,q) \slash G(D)$ the
{\em space of unparametrized immersed di-gons.}

We remark that $\abs{\mcMhat(p,q)} \in \{0,1\}$ for $p$
and $q$ with $\mu(p,q)=1$ since there is exactly one segment $[p,q]_i$, $i \in \{s,u\}$, joining $p$, $q
\in \mcH$ because of $\pi_2(M)=0$.

Now let $p$, $q \in \mcH$ with $\mu(p,q)=2$. We
define $\mcN_b(p,q)$ resp.\ $\mcN_c(p,q)$ to be the {\em space of smooth immersed
hearts} $w: D_b \to M$ resp.\ $w: D_c \to M$ that are
orientation preserving and satisfy
$w(B_u) \subset W^u$, $w(B_s) \subset W^s$, 
$w(-1,0)=p$ and $w(1,0)=q$.
We set $\mcN(p,q):=\mcN_b(p,q) \ \dot{\cup}\ \mcN_c(p,q)$.
Now denote by $G(D_b)$ resp.\ $G(D_c)$ the group of orientation preserving
diffeomorphisms of $D_b$ resp.\ $D_c$ that preserve the vertices. Call
$\mcNhat_b(p,q):=\mcN_b(p,q) \slash G(D_b)$ resp.\ $\mcNhat_c(p,q):=\mcN_c(p,q)
\slash G(D_c)$ and $\mcNhat(p,q) :=\mcNhat_b(p,q) \ \dot{\cup}\ \mcNhat_c(p,q)$
the {\em spaces of unparametrized immersed hearts.}

When working with $\mcM(p,q)$ and $\mcN(p,r)$ we always implicitly
assume $p$, $q$, $r \in \mcH$ with $[p]=[q]$, $[p]=[r]$, $\mu(p,q)=1$ and
$\mu(p,r)=2$.


\subsection{Cutting and gluing}

We now recall some results from Hohloch \cite[Section 2.4]{hohloch1} that are necessary for the construction and welldefinedness of the boundary operator later on.
Intuitively, {\em gluing} of two immersed di-gons $v \in \mcMhat(p,q)$ and $\vhat \in
\mcMhat(q,r)$ with $\mu(p,q)=1=\mu(q,r)$ and thus $\mu(p,r)=2$ is the procedure that recognizes the tupel $(v,\vhat)$ as an element of
$\mcNhat(p,r)$. 
The {\em cutting} procedure, on the other side, is the `inverse' procedure that starts with $w \in \mcNhat(p,r)$ and
locates two points $q_a$, $q_b \in \mcH$ such that $w$ can be idenitified with either a tupel $(v,\vhat) \in \mcMhat(p,q_a) \x \mcMhat(q_a,r)$ or a tupel
$(v', \vhat') \in \mcMhat(p,q_b) \x \mcMhat(q_b,r)$. The geometric intuition of gluing and cutting is sketched in Figure \ref{cutGlue}.

\begin{figure}[h]
\begin{center}

\input{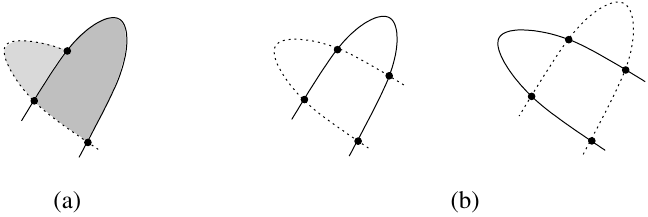_t}

\caption{Geometric intuition for gluing in (a) and cutting with the two placements of the concave and convex vertices in (b). The unstable manifold $W^u$ is drawn with a dotted line and the stable manifold $W^s$ with a full line.}
\label{cutGlue}

\end{center}
\end{figure}

\begin{theorem}[Hohloch \cite{hohloch1}, Theorem 14, `Gluing']
\label{glue}
Let $p$, $q$, $ r \in \mcH$ with $[p]=[q]=[r]$ and $\mu(p,q)=1=\mu(q,r)$. Consider $v
\in \mcMhat(p,q)$ and $\vhat \in \mcMhat(q,r)$. Then the {\em gluing construction
$\#$} for $v $ and $\vhat$ yields an immersed heart $w:=\vhat\#v \in
\mcNhat(p,r)$.
\end{theorem}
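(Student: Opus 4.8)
\textbf{Proof proposal for Theorem \ref{glue} (`Gluing').}

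The plan is to make precise the picture in Figure \ref{cutGlue}(a): given the two unparametrized di-gons $v \in \mcMhat(p,q)$ and $\vhat \in \mcMhat(q,r)$, first fix convenient representatives, then glue their domains along the common vertex $q$ to obtain a topological disc with two distinguished boundary points, and finally argue that the glued map is (after smoothing) an orientation-preserving immersion of a heart. First I would choose representatives $v: D \to M$ and $\vhat: D \to M$ so that near the vertex $(1,0)$ of the domain of $v$ and the vertex $(-1,0)$ of the domain of $\vhat$ both maps agree, up to reparametrization, with a fixed local model: by the immersion hypothesis on boundaries and vertices, a neighbourhood of $q$ in $W^u \cup W^s$ looks like two transverse arcs through $q$, and the images $v(B_u), v(B_s)$ (resp. $\vhat(B_u), \vhat(B_s)$) are embedded arcs in $W^u$ (resp. $W^s$) terminating at $q$. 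Using $G(D)$ I can arrange that $v$ and $\vhat$ have matching $1$-jets along the boundary arcs abutting $q$, so that the concatenation is $C^1$ there.

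Next I would carry out the topological gluing of domains: identify a short boundary subarc of $B_s$ of $v$'s domain ending at $(1,0)$ with a short boundary subarc of $B_s$ of $\vhat$'s domain starting at $(-1,0)$ (and similarly for the $B_u$ arcs), so that the two copies of $D$ are glued at the single vertex $q$. The resulting domain $\Sigma$ is a closed disc with exactly two marked boundary points, the images of $p$ (from $v$) and $r$ (from $\vhat$); at $p$ and at $r$ the opening angles are inherited from the original di-gons and hence convex ($< \pi$), while at the gluing point the two convex angles of the di-gons at $q$ add up. Here I must split into the two cases according to whether the total angle at $q$ is less than $\pi$ (giving the heart $D_b$) or between $\pi$ and $2\pi$ (giving the heart $D_c$); in either case $\Sigma$ is diffeomorphic, by a vertex-preserving orientation-preserving diffeomorphism, to $D_b$ or $D_c$, and the glued map $w$ descends to $\mcNhat(p,r)$. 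One checks $w(B_u) \subset W^u$ and $w(B_s) \subset W^s$ directly from the corresponding properties of $v$ and $\vhat$, that $w$ is orientation preserving since both pieces are, and that $\mu(p,r)=2$ follows from additivity of the relative Maslov index, $\mu(p,r)=\mu(p,q)+\mu(q,r)=1+1$.

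The main obstacle I expect is precisely the behaviour at the vertex $q$: one has to verify that the concatenated map is genuinely an immersion there — i.e. that the two boundary arcs from $v$ and the two from $\vhat$ fit together so that a neighbourhood of $q$ in $\Sigma$ maps to a single wedge-shaped region with a well-defined opening angle, rather than folding back on itself or producing a cusp — and that, after a $C^1$-small smoothing of the domain corner, the map stays an orientation-preserving immersion (including on the now-smoothed boundary). This is where the normalization of representatives in the first step pays off: because $v$ and $\vhat$ both map the relevant boundary arcs immersively onto the \emph{same} local arcs of $W^u$ and $W^s$ through $q$, and both are orientation preserving, the two wedges sit on opposite sides of $W^u \cup W^s$ near $q$ and their union is again a wedge, whose angle is the sum. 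The remaining verification — that no new self-intersections or critical points are introduced in the interior, which uses $\pi_2(M)=0$ and the fact that each $[p,q]_i$ is the unique segment — is then routine. I would finish by noting that the construction is independent of the choices made, up to the $G(D_b)$- resp. $G(D_c)$-action, so $w = \vhat \# v$ is a well-defined element of $\mcNhat(p,r)$.
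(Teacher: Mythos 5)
Your construction goes wrong at the crucial point, namely at what the glued object looks like near $q$ and where the concave vertex of the heart comes from. By definition, an element of $\mcNhat(p,r)$ has exactly \emph{two} vertices, mapped to $p$ and $r$, one convex and one concave; the gluing point $q$ is \emph{not} a vertex of $\vhat\#v$ at all. Your proposal keeps the convex corners at $p$ and $r$ "inherited from the original di-gons" and creates a new corner at $q$ whose angle is the sum of the two convex angles, with the dichotomy "$<\pi$ gives $D_b$, between $\pi$ and $2\pi$ gives $D_c$". Such a three-cornered object with both $p$ and $r$ convex is not an immersed heart in the sense of the paper, so even if your construction went through you would not land in $\mcNhat(p,r)$. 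In the actual gluing the two convex wedges at $q$ are adjacent along one local branch (of $W^u$, say) and their union is a wedge of angle exactly $\pi$ bounded by the smooth curve $W^s$ through $q$; hence $q$ becomes a \emph{smooth} boundary point lying on the $B_s$-part (resp.\ $B_u$-part) of the heart's boundary. The concave vertex instead appears at $p$ or at $r$: generically the images of the two di-gons overlap along a whole segment of $W^u$ or $W^s$ (e.g.\ if $r \in\ ]p,q[_u$ then $[q,r]_u \subset [p,q]_u$), the domains must be glued along the full corresponding boundary sub-arcs (not "at the single vertex $q$" --- gluing two discs at one boundary point does not even give a disc, and gluing along short arcs on both sides of $q$ would make $q$ an interior point), and at the point mapping to $r$ (say) a smooth boundary point of one di-gon meets the convex corner of the other, producing an angle in $(\pi,2\pi)$, i.e.\ the concave vertex. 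In particular $w(B_u)$ is $[p,r]_u$ and not $[p,q]_u \cup [q,r]_u$: part of the di-gon boundaries ends up in the \emph{interior} of the heart, so your "direct" verification of the boundary conditions also fails. The $D_b$/$D_c$ dichotomy corresponds to whether the concave vertex sits at $p$ or at $r$ (equivalently, whether the overlap occurs along $W^s$ or $W^u$), not to the size of an angle at $q$.

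The substantive content of the theorem, which your sketch does not engage with, is precisely the case analysis showing that the orientation-preserving hypothesis together with $\mu(p,q)=1=\mu(q,r)$ forces such an overlap configuration and excludes the "bowtie" situation in which the two lunes meet only at $q$ on diagonally opposite sides (in that configuration the second lune would belong to $\mcMhat(r,q)$ rather than $\mcMhat(q,r)$, since the boundary of an orientation-preserving immersed di-gon is traversed with the region on a fixed side). Determining along which of the two manifolds the overlap occurs, and verifying that the resulting map is an immersion also at the corners and along the identified arcs, is where the work lies; this is carried out in Hohloch \cite{hohloch1} (Theorem 14), which the present paper only recalls without reproof. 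Two smaller points: the appeal to $\pi_2(M)=0$ to rule out "new self-intersections" is off target, since immersed hearts are allowed to self-intersect; and the Maslov additivity claim $\mu(p,r)=2$ is fine but is an assumption built into the definition of $\mcNhat(p,r)$ rather than something the gluing needs to produce.
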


Recall that the two connected components of $W^s\backslash\{x\}$ resp.\
$W^u\backslash \{x\}$ are called the {\em branches} of the (un)stable manifolds.
$W^u$ and $W^s$ are said to be {\em strongly intersecting} (w.r.t.\ $x$) if each
branch of $W^u$ intersects each branch of $W^s$, meaning $W^i_+ \cap W_j^+ \neq
\emptyset \neq W^i_- \cap W_j^+ $ for $i, \ j \in \{0,1\}$ and $i \neq j$.
For a discussion when strongly intersecting is a generic property, see Section 2.4 in Hohloch \cite{hohloch1}.

\begin{theorem}[Hohloch \cite{hohloch1}, Theorem 15, `Cutting']
\label{cut}
Let $W^u$ and $W^s$ be strongly intersecting and let all their intersections be transverse.
Consider $p$, $ r \in \mcH$ with $[p]=[r]$ and $\mu(p,r)=2$ and $w \in \mcN(p,r)$. 
Then there are distinct, unique $q_a$, $q_b \in \mcH$ with
$\mu(p,q_i)=1=\mu(q_i, r)$ and $v_i \in \mcM(p,q_i)$, $\vhat_i \in \mcM(q_i,r)$
such that $\vhat_i \#v_i=w$ for $i \in \{a,b\}$. The points $q_a$ and $q_b $ are called {\em cutting partners}.
\end{theorem}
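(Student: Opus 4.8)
The plan is to reduce the cutting statement to a topological/combinatorial analysis of the immersed heart $w \in \mcN(p,r)$ and then to invoke the gluing theorem (Theorem \ref{glue}) to recognize the pieces. First I would fix $w$ and look at the preimages under $w$ of the homoclinic points of $\mcH$ lying in the image: since $\mu(p,r)=2$, the Maslov/winding interpretation from Section \ref{secMaslov} says that as one traverses the boundary loop of the heart — out along $w(B_u) \subset W^u$ from $p$ to $r$ and back along $w(B_s) \subset W^s$ from $r$ to $p$ — the unit tangent makes exactly one full turn. The concave vertex of $D_b$ (or $D_c$) is the only place where the boundary turns `the wrong way'. The key geometric observation is that a point $q \in \mcH$ serving as a cutting partner corresponds exactly to a way of splitting this winding-number-$2$ loop into two winding-number-$1$ sublobes, each of which is an immersed di-gon; the position of the concave vertex forces the candidate $q$ to lie on a specific arc of the boundary.

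The main steps I would carry out are: (1) Use strong intersection to guarantee that the relevant segments $[x,p]_u$, $[p,x]_s$, etc., actually meet the right branches, so that candidate cutting points exist in $\mcH$ and not merely as abstract boundary points. (2) Analyze the two combinatorial placements of the concave vertex relative to the convex vertex — these are precisely the polygons $D_b$ and $D_c$ of Figure \ref{digon}, and they yield the two distinct partners $q_a$ and $q_b$; sketch why exactly these two placements are possible and no others, using transversality to rule out degenerate tangencies and $\pi_2(M)=0$ to rule out hidden topology. (3) For each placement, cut $D_b$ (resp.\ $D_c$) along the arc through the identified interior boundary point into two copies of the di-gon $D$, obtaining $v_i: D \to M$ and $\vhat_i: D \to M$ with the correct boundary and vertex conditions; check that $\mu(p,q_i)=1=\mu(q_i,r)$ by additivity of the Maslov index, $\mu(p,q_i) + \mu(q_i,r) = \mu(p,r) = 2$, together with the fact that each sublobe is an immersed di-gon (hence contributes winding number $1$, i.e.\ Maslov index $1$). (4) Verify $\vhat_i \# v_i = w$ directly from the definition of the gluing construction $\#$ in Theorem \ref{glue}, since gluing and cutting are set up to be mutually inverse on the nose. (5) Establish uniqueness and distinctness of $q_a, q_b$: uniqueness within each placement follows because the di-gon structure together with $\mu(p,q_i)=1$ pins down the segments $[p,q_i]_s$ and $[p,q_i]_u$ (there is a unique such segment by $\pi_2(M)=0$, as already noted after the definition of $\mcMhat$), and $q_a \neq q_b$ because the two placements put the concave vertex on genuinely different boundary arcs.

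The hard part will be Step (2): making rigorous the claim that precisely two combinatorial types of sublobe decomposition occur. One has to argue carefully that traversing the boundary of the heart, the first transverse self-crossing of the projected boundary loop (relative to the chosen vertex) is well-defined and gives $q_a$, the `other' crossing gives $q_b$, and that strong intersection plus transversality guarantee both crossings land on honest homoclinic points of the correct homotopy class and Maslov index. This is where the geometric picture of Figure \ref{cutGlue}(b) must be turned into a genuine argument; everything else — the Maslov additivity bookkeeping, the di-gon cut-and-paste, and the compatibility with $\#$ — is essentially formal once the two cutting points are correctly located.
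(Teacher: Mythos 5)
Note first that this paper does not actually prove \refcut: it is quoted verbatim from Hohloch \cite{hohloch1} (Theorem 15), so there is no in-paper argument to measure your sketch against; it has to stand on its own. Judged that way, your outline reproduces the geometric intuition that the paper itself supplies around Figure \ref{cutGlue}, but it leaves the actual content of the theorem unestablished. Your step (2) --- that a heart of relative Maslov index $2$ admits a decomposition into two immersed di-gons through \emph{exactly two} homoclinic points --- is precisely what the theorem asserts, and you explicitly defer it as ``the hard part''. The remaining steps (Maslov additivity $\mu(p,q_i)+\mu(q_i,r)=\mu(p,r)$, cutting the domain polygon, compatibility with the gluing construction of \refglue) are indeed routine bookkeeping once the two points are located; what is missing is exactly the substance: an existence argument showing that suitable boundary crossings exist at all (this is where the strong-intersection hypothesis must be put to work, not merely invoked), and an exhaustiveness argument showing that no further crossing of $[p,r]_u$ with $[p,r]_s$ --- of which a homoclinic tangle may contain many --- yields an immersed-di-gon decomposition. ``The first transverse self-crossing relative to the chosen vertex'' is not yet a well-defined prescription in this setting, and nothing in the proposal rules out a third valid cutting point.

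There is also a concrete conceptual error: you identify the two cutting partners with the two polygon types $D_b$ and $D_c$ of Figure \ref{digon} (``the two combinatorial placements of the concave vertex \dots yield the two distinct partners $q_a$ and $q_b$''). But $\mcN(p,r)=\mcN_b(p,r)\,\dot\cup\,\mcN_c(p,r)$ is a disjoint union: a fixed heart $w$ is modeled on exactly one of $D_b$, $D_c$, and \emph{both} $q_a$ and $q_b$ are cutting partners of that single heart --- in Figure \ref{cutGlue}(b) each of the two pictures (one per vertex placement) shows both $q_a$ and $q_b$ on one and the same heart. The two partners arise as two distinct crossings of the boundary arcs $[p,r]_u$ and $[p,r]_s$ of the same $w$, not from switching the domain type. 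Implemented as written, your step (2) would produce at most one cutting partner per heart, and your uniqueness/distinctness argument in step (5), which rests on ``the two placements put the concave vertex on genuinely different boundary arcs'', inherits the same flaw. Until the two crossings are correctly located on a fixed heart and shown to be the only ones giving immersed di-gons with $\mu(p,q_i)=1=\mu(q_i,r)$, the proposal does not constitute a proof.
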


This implies that the map 
$$
(p,r) \mbox{ as in \refcut} \quad \mapsto \quad \mbox{their cutting partners } (q_a, q_b)
$$ 
is welldefined.
Note that this map is not injective since $(q_a, q_b)$ may appear also as cutting partners of a different tuple $(p,r')$, see Figure \ref{sameCutPart}. Moreover, note that the shape of the immersions in $\mcN(p,r)$ and $\mcN(p,r')$ disagree since they are determined by their boundaries $[p,r]_s \cup [p,r]_u$ resp.\ $[p,r']_s \cup [p,r']_u$ which do not coincide.

\begin{figure}[h]
\begin{center}

\input{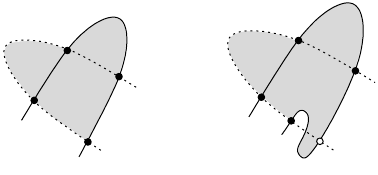_t}

\caption{The points $q_a$ and $q_b$ may appear as cutting partners for two different tuples $(p,r)$ and $(p,r')$ that satisfy both the requirements of \refcut.}
\label{sameCutPart}

\end{center}
\end{figure}


\subsection{Signs and coherent orientations}
\label{secSigns}

In order to define the chain groups over the ring $\Z$ instead of the field $\Z \slash 2 \Z$, we have to show that cutting and gluing are behaving `coherently' when we endow $\mcMhat(p,q)$ with an `orientation', i.e., when we, instead of just counting $\abs{\mcMhat(p,q)}\ mod \ 2$, assign to $\mcMhat(p,q)$ an integer $n(p,q) \in \Z$. We recall the necessary definitions and refer to Hohloch \cite[Section 3.2]{hohloch1} for details.

Now let $\phi$ be a symplectomorphism on a two dimensional manifold with hyperbolic fixed point $x$ and stable and unstable manifolds $W^s$ and $W^u$. Such a $\phi$ is either orientation preserving on $W^s$ {\it and} $W^u$ or orientation reversing on {\em both}. In the first case, $\phi$ is said to be {\em $W$-orientation preserving} and in the latter {\em $W$-orientation reversing}.

Consider $W^u$ and endow it with an orientation denoted by $o_u$. Moreover, for all $p$, $q \in \mcH$ with $\mu(p,q)=1$, endow the segment $[p,q]_u$ with the orientation given by the `direction' from $p$ to $q$ and denote this orientation by $o_{pq}$. If $\phi $ is $W$-orientation preserving, we set
\begin{equation}
\label{signs}
n(p,q):=
\left\{
\begin{aligned}
+1 & \quad \mbox{if } \mcMhat(p,q) \neq \emptyset \mbox{ and } o_{pq}=o_u,  \\
 -1 & \quad \mbox{if } \mcMhat(p,q) \neq \emptyset \mbox{ and } o_{pq} \neq o_u,
 \\
0 & \quad \mbox{if } \mcMhat(p,q)=\emptyset.
\end{aligned}
\right.
\end{equation}
Using $-o_u$ instead of $o_u$ flips the sign of $n(p,q)$. The same happens if we use the orientation $o_{qp}=-o_{pq}$ instead of $o_{pq}$. Note that $n(p,q)$ could also be defined using an orientation on $W^s$. This also results in sign flips, for details we refer to Hohloch \cite[Section 3.2]{hohloch1}. 

\begin{remark}
In case $\phi$ is $W$-orientation reversing, we have to count modulo 2, i.e. using signs of the form $n_2(p,q):= n(p,q) \ mod \ 2$.
\end{remark}

For details we refer again to Hohloch \cite[Section 3.2]{hohloch1}.
Now we explain what we mean with signs being `coherent' w.r.t.\ gluing and cutting:

\begin{lemma}[Hohloch \cite{hohloch1}, Lemma 21]
\label{cancel}
Consider $p$, $r \in \mcH$ with $\mu(p,r)=2$ and $w \in \mcN(p,r)$. For $i \in
\{a, b\}$, let $q_i \in \mcH$ with $\mu(p,q_i)=1=\mu(q_i,r)$ and $\vhat_i \in
\mcM(p,q_i)$ and $v_i \in \mcM(q_i,r)$ such that $\vhat_i \#v_i=w$.
Then
\beqs
n(p,q_a)\cdot n(q_a,r)=- n(p,q_b) \cdot n(q_b,r)
\eeqs
and this relation also is true for $n_2$.
\end{lemma}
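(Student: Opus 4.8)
The statement to prove is Lemma~\ref{cancel}: given $w \in \mcN(p,r)$ with $\mu(p,r)=2$ and its two cutting decompositions $\vhat_i \# v_i = w$ through the cutting partners $q_a$, $q_b$, we have $n(p,q_a)\cdot n(q_a,r) = -n(p,q_b)\cdot n(q_b,r)$. The key geometric fact, supplied by \refcut, is that $q_a$ and $q_b$ are exactly the two points where the concave/convex vertex can be placed when cutting $w$; concretely, inside the immersed heart $w$ the point $q_a$ lies on $W^s$-boundary-side and $q_b$ on $W^u$-boundary-side (or vice versa), and both are endpoints of sub-segments of $[p,r]_u$ and $[p,r]_s$. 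So the plan is to trace through how the fixed orientation $o_u$ on $W^u$ (used in Equation~\eqref{signs}) interacts with the four segments $[p,q_a]_u$, $[q_a,r]_u$, $[p,q_b]_u$, $[q_b,r]_u$ that together make up $[p,r]_u$, and to show that the product of the two comparison signs for the $a$-decomposition is opposite to that for the $b$-decomposition.

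\textbf{Key steps.} First I would set up coordinates on the boundary of $w$: since $w$ is an immersed heart, $[p,r]_u$ and $[p,r]_s$ are the two boundary arcs, and each cutting partner $q_i$ lies on one of these arcs. I would record, for $i\in\{a,b\}$, whether the orientation $o_{pq_i}$ (direction $p\to q_i$ along $W^u$) agrees with $o_u$, and similarly whether $o_{q_ir}$ agrees with $o_u$, so that $n(p,q_i)\cdot n(q_i,r)$ is $+1$ iff an even number of these two comparisons are reversals. Second, the crucial combinatorial observation: because of the way the concave vertex sits (angle $>\pi$) versus the convex vertex, the local picture of how $W^u$ traverses near $q_a$ is \emph{opposite} to how it traverses near $q_b$ relative to the global orientation $o_u$ — essentially one cutting point reverses the $W^u$-direction between the two sub-di-gons and the other preserves it, because the roles of ``upper'' and ``lower'' boundary $B_u$, $B_s$ are swapped between the two heart-shapes $D_b$ and $D_c$ appearing in Figure~\ref{digon}. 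Third, I would convert this ``opposite traversal'' statement into the sign identity: the four orientation comparisons split so that exactly one of the two products picks up an extra minus sign, giving $n(p,q_a)n(q_a,r) = -n(p,q_b)n(q_b,r)$. Finally, since the $W$-orientation-reversing case is handled by reducing everything mod $2$, and $-1 \equiv 1 \pmod 2$, the relation for $n_2$ follows trivially once it is known mod-$2$ that both products are nonzero — which holds because $\mcMhat(p,q_i)\neq\emptyset$ and $\mcMhat(q_i,r)\neq\emptyset$ by the cutting theorem.

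\textbf{Main obstacle.} The hard part is the second step: making rigorous and orientation-bookkeeping-proof the claim that the two placements of the concave vertex induce \emph{opposite} relative orientations of $W^u$ across the cut. This is really a statement about the local model of the heart near each vertex and how an orientation-preserving immersion $w$ transports the standard orientation of $D_b$ (resp.\ $D_c$) to $M$; one must check that the concave vertex at $q_a$ versus at $q_b$ forces $[p,q_a]_u \cup [q_a,r]_u$ and $[p,q_b]_u\cup[q_b,r]_u$ to be traversed with a relative reversal. I would argue this by choosing the orientation $o_u$ induced from the immersion $w$ itself on $[p,r]_u$, reducing the comparison to a purely local one at the two vertices, and then invoking the convex-versus-concave angle condition from the definition of immersed hearts together with orientation-preservation of $w$; alternatively one can cite the detailed sign analysis of Hohloch \cite[Section 3.2]{hohloch1}, of which this lemma is the two-di-gon instance, and simply verify that the present hypotheses ($\mu(p,r)=2$, strong transverse intersection via \refcut) match those used there. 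The remaining steps are routine sign arithmetic once this orientation comparison is pinned down.
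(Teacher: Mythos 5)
You should first be aware that the present paper does not prove Lemma \ref{cancel} at all: it is imported verbatim from Hohloch \cite{hohloch1} (Lemma 21), with the sign conventions and details delegated to Section 3.2 of that reference, so the fair comparison is with the argument there. Your overall skeleton does match that argument: reduce everything to comparing the traversal directions $o_{pq_i}$ and $o_{q_ir}$ with the fixed orientation $o_u$, observe that the product $n(p,q_i)\cdot n(q_i,r)$ is $+1$ precisely when the two comparisons agree, and handle $n_2$ by reduction mod $2$ (that last part of your sketch is fine). However, there is a genuine gap exactly at the step you yourself single out as the main obstacle, and the justification you offer for it is not the right mechanism: the sign flip has nothing to do with the roles of $B_u$ and $B_s$ being ``swapped between $D_b$ and $D_c$'' --- in both models $B_s$ is the upper and $B_u$ the lower boundary, they differ only in which vertex is concave, and $w$ is one fixed heart of one fixed type which both decompositions reconstitute, so no such swap occurs.

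The actual mechanism, which your plan never pins down, is positional. By the cutting construction of \refcut, the two cutting partners are found from the concave vertex (say it is $r$) by following $W^u$ resp.\ $W^s$ from $r$ into the interior of $w$ until first exit: one partner, say $q_b$, therefore lies on the unstable boundary arc $[p,r]_u$, while the other, $q_a$, lies on the stable arc $[p,r]_s$ and is joined to $r$ by an unstable segment that is the continuation of $W^u$ beyond $r$. Hence the order of the relevant points along $W^u$ is $p,\,q_b,\,r$ in the first decomposition but $p,\,r,\,q_a$ in the second. Since the product $n(p,q_i)\,n(q_i,r)$ does not depend on the choice of $o_u$ and equals $+1$ exactly when $o_{pq_i}=o_{q_ir}$, the first decomposition yields $+1$ (both subsegments of $[p,r]_u$ are traversed coherently from $p$ towards $r$), while the second yields $-1$ (the segment $[p,q_a]_u$ runs through $r$, whereas $[q_a,r]_u$ is traversed back against that direction); both products are nonzero because \refcut\ provides the four di-gons. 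This order-along-$W^u$ argument, together with the symmetric case in which the concave vertex is $p$, is what your ``local model at the two vertices'' step needs to become; as written, your proposal either leaves that step open or falls back on citing Hohloch \cite{hohloch1}, Section 3.2 --- which is precisely what the paper itself does, so nothing new would be proved.
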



\section{Local homoclinic Floer homology}
\label{sec:localFH}

In this section, we aim at defining a Floer homology generated by a finite set of `arbitrary' homoclinic points in order to generalize the type of generators which, in previous versions of homoclinic Floer homology (see Hohloch \cite{hohloch1, hohloch2, hohloch3}), were always required to be (semi)primary.

\vsp

Within this section, let the symplectic manifold $(M, \om)$ be $\R^2$ or a closed surface of genus $g \geq 1$ both endowed with a volume form. Let $\phi: M \to M$ be a $W$-orientation preserving symplectomorphism with hyperbolic fixed point $x$ whose stable and unstable manifolds $W^s:=W^s(\phi, x)$ and $W^u:=W^u(\phi, x)$ are strongly and transversely intersecting.


\subsection{Basic notions and notations}

Consider the set
$$\mcE:=\{ E \subseteq \mcH_{[x]} \mid \abs{E}< \infty \}$$
of finite sets of contractible homoclinic points. For each $E \in \mcE$, there exist
$$k^+:=k^+(E):=\max\{ \  \mu(p) \mid p \in E\} \quad \mbox{and} \quad k^-:=k^-(E):=\min\{ \ \mu(p) \mid p \in E\} .$$
For $E \in \mcE$ and $k \in \Z$, we define
$$
E_k:= \{ p \in E \mid \mu(p) = k\}. 
$$
Therefore, we have
$$
E  = \bigcup_{k \in \Z} E_k= \bigcup_{k^- \leq k \leq k^+} E_k \qquad \mbox{for all } E \in \mcE.
$$
Given $E \in \mcE$, define
$$\Z E:= \Span_\Z \{\ p \mid p \in E \ \}$$
which decomposes into
$$
\Z E   = \bigoplus_{k \in \Z} \Z E_k=\bigoplus_{k^- \leq k \leq k^+} \Z E_k
$$
where we set $\Z E_k =:\{0\}$ whenever $E_k = \emptyset$. Note that, for $D, E \in \mcE$ with $D \subseteq E$, we have also $D_k \subseteq E_k$ for all $k \in \Z$ and thus in fact
$$
\Z D \subseteq \Z E \qquad \mbox{and} \qquad \Z D_k \subseteq \Z E_k.
$$
Hence the grading by the Maslov index is compatible with the structure as freely generated abelian group or $\Z$-module, i.e., we obtained in fact {\em graded} $\Z$-modules.


\subsection{Welldefinedness of the boundary operator}
\label{sec:problemsDelComplete}

If we want to define a Floer homology generated by $E \in \mcE$, then we first need to define a (candidate for a) boundary operator on the (candidate for our) chain complex and then show that it actually is a boundary operator so that we may pass to homology.

Let $\Z E = \bigoplus_{k \in \Z} \Z E_k$ be our chain group generated by $E$. Now define
\begin{equation}
 \label{defDel}
 \del^E : \Z E \to \Z E, \qquad \del^E p:= \sum_{\stackrel{q \in E}{\mu(q) = \mu(p)-1}} n(p,q) \ q
\end{equation}
on the generators and extend $\del^E$ by linearity to $\Z E$. This is the intuitive generalization of the boundary operator of primary homoclinic Floer homology from Hohloch \cite{hohloch1}.
Since $\abs{E}< \infty$ and $n(p,q) \in\{ \pm 1, 0\}$, the sum is finite and welldefined. The map $\del^E$ decomposes in fact as $\del^E=(\del^E_k)_{k \in \Z}$ with
$$
\del^E_k : \Z E_k \to \Z E_{k-1}, \qquad \del^E_k p= \sum_{q \in E_{k-1}} n(p,q) \ q.
$$
Note that $\del^E_k =0$ for all $k \leq k^-(E)$ and all $k > k^+(E)$.
To obtain a chain complex we need the concatenation $\del^E \circ \del^E$ to be welldefined and to yield $\del^E \circ \del ^E =0$, more precisely, we need $\del^E_{k-1} \circ \del^E_{k} =0$ for all $k \in \Z$ so that the sequence below induced by $\Z E = \bigoplus_{k \in \Z} \Z E_k $
$$
\dots \stackrel{\del^E_{k+1}}{\longrightarrow} \Z E_k  \stackrel{\del^E_k}{\longrightarrow} \Z E_{k-1} \stackrel{\del^E_{k-1}}{\longrightarrow} \Z E_{k-2}  \stackrel{\del^E_{k-2}}{\longrightarrow} \dots
$$
give rise to a chain complex with boundary operator $\del^E$.

Note that we work with {\em finite} sets of homoclinic points for the following reason:

\begin{example}
If we admit $E \subseteq \mcH_{[x]}$ with $\abs{E}= \infty$, then $\del^E p$ may be an infinite sum, see the choice of $p$ and some of the (infinitely many) $q$ with $n(p,q) \neq 0$ in Figure \ref{boundaryOperatorInfinite}.
\end{example}

An infinite sum may cause problems in particular for the welldefinedness of the double sum arising from $\del^E \circ \del^E$. We avoid this problem by working with finite sets.

\begin{figure}[h]
\begin{center}

\input{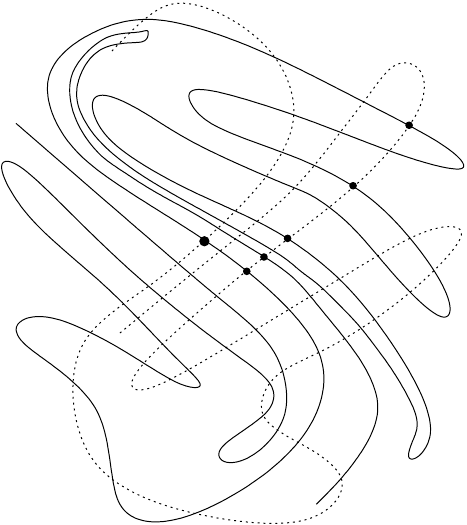_t}

\caption{The unstable manifold is drawn with a dotted line and the stable manifold with a continuous line. Given a homoclinic point $p$ as chosen in the figure, then we find $\abs{ \{q \in \mcH \mid n(p,q) \neq 0\} } = \infty$ since $n(p,q_i) \neq 0$ for at least $q_1, q_2, q_3, q_4, \dots$}

\label{boundaryOperatorInfinite}

\end{center}
\end{figure}

We will now observe that there are sets $E \subset \mcE$ where the above approach leads to a welldefined boundary operator as can be seen in items \ref{completeOne} and \ref{completeTwo} of \refexampleNonComplete.
Unfortunately there are also sets where $\del^E \circ \del^E \neq 0$, see \refexampleNonComplete, item \ref{notComplete}.

\newpage

\begin{example}
\label{exampleNonComplete}
\mbox{ } 
\begin{enumerate}[label=\arabic*)]
  \item \label{completeOne}
 Consider the set $E:=\{p, q_a, q_b, r\}$ in the left part of Figure \ref{cutGlue} (b) and endow $W^u$ with an orientation `pointing' from $q_a$ to $r$. Then we obtain $\del^E p = - q_a + q_b$ and $\del^E q_a = r = \del^E q_b$ and $\del^E r =0$ and thus $(\del^E \circ \del^E) p = \del^E(- q_a + q_b) = -r + r =0$ and $(\del^E \circ \del^E ) q_a = 0$ and $(\del^E \circ \del^E ) q_b = 0$ and $(\del^E \circ \del^E ) r = 0$ implying $\del^E \circ \del^E =0$ on $\Z E$.
 \item \label{completeTwo}
 Considering the right part of Figure \ref{cutGlue} (b), an analogous calculation also shows $\del^E \circ \del^E =0$ on $E:=\{p, q_a, q_b, r\}$.
 \item \label{notComplete}
 Consider the set $E:=\{p, q, r\}$ in Figure \ref{cutGlue} (a) and endow $W^u$ with an orientation `pointing' from $p$ to $r$. Then we find $\del^E p = q$ and $\del^E q = r$ implying $(\del^E\circ \del^E) p = \del^E (\del^E p) =  \del^E (-q) = r \neq 0$. Thus $\del^E \circ \del^E \neq 0$ on $\Z E$.
\end{enumerate}
\end{example}

Thus we either have to exclude all $E \in \mcE$ with $\del^E \circ \del^E \neq 0$ or find a way to assign to $E$ a unique set on which the square of the boundary operator vanishes.

\begin{definition}
 $E \in \mcE$ is said to be {\em $\del$-complete} if $\del^E \circ \del^E =0$ on $\Z E$. Otherwise $E$ is referred to as {\em $\del$-incomplete}.
 We call $\Z E$ {\em $\del$-complete} if the underlying $E \in \mcE$ is $\del$-complete and otherwise {\em $\del$-incomplete}.
 Moreover, we set
 $$
 \overline{\mcE} :=\{ E \in \mcE \mid E \mbox{ is } \del \mbox{-complete}\}.
 $$
\end{definition}

\refexampleNonComplete\ b) and c) show that $\overline{\mcE}  \neq \emptyset$ for strongly and transversely intersecting homoclinic tangles since these `heart shaped' constellations of points appear naturally.
Geometrically, being $\del$-complete means the following.

\begin{lemma}
\label{geomDelComplete}
The following two statements are equivalent:
\begin{enumerate}[label=\arabic*)]
 \item
 $E \in \mcE$ is $\del$-complete.
 \item
 Given $E \in \mcE$, then, for all $k \in \{k^+(E), \dots, k^-(E)+ 2\}$ and all triple $(p,q,r) \in E_k \times E_{k-1} \times E_{k-2}$ which give rise to an immersed heart in the sense of \refglue, the cutting partner of $q$ in the sense of \refcut\ is contained in $E_{k-1}$.
\end{enumerate}
\end{lemma}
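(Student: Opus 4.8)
The plan is to prove the two implications separately by unwinding the definition of $\del^E \circ \del^E$ in terms of the compatible signs $n(p,q)$ and matching them up with the cutting/gluing dictionary of \refglue\ and \refcut. First I would fix $E \in \mcE$ and $k$, take $p \in E_k$, and compute
$$
(\del^E \circ \del^E)(p) = \sum_{r \in E_{k-2}} \Bigl( \sum_{q \in E_{k-1}} n(p,q)\, n(q,r) \Bigr)\, r.
$$
So $\del^E \circ \del^E = 0$ on all of $\Z E$ if and only if for every such $p$ and every $r \in E_{k-2}$ the inner sum $\sum_{q \in E_{k-1}} n(p,q)\, n(q,r)$ vanishes. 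The point is that a term $n(p,q)\, n(q,r)$ is nonzero exactly when $\mcMhat(p,q) \neq \emptyset \neq \mcMhat(q,r)$, i.e.\ when there are immersed di-gons from $p$ to $q$ and from $q$ to $r$; by \refglue\ these glue to an immersed heart $w = \vhat \# v \in \mcNhat(p,r)$, and by \refcut\ every such heart $w$ (noting $\mu(p,r) = 2$) arises from \emph{exactly} two cutting partners $q_a, q_b \in \mcH$ with $\mu(p,q_i) = 1 = \mu(q_i,r)$, and by \refcancel\ the two contributions satisfy $n(p,q_a)\, n(q_a,r) = -\,n(p,q_b)\, n(q_b,r)$, so they cancel \emph{provided both $q_a$ and $q_b$ lie in $E$}.

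For the direction $2) \Rightarrow 1)$: assume that whenever a triple $(p,q,r) \in E_k \times E_{k-1} \times E_{k-2}$ gives rise to an immersed heart, the cutting partner of $q$ (w.r.t.\ that heart) also lies in $E_{k-1}$. Fix $p \in E_k$ and $r \in E_{k-2}$ and group the nonzero terms of the inner sum according to which immersed heart $w \in \mcN(p,r)$ they come from. The shape of $w$ is determined by its boundary $[p,r]_s \cup [p,r]_u$, so there are finitely many such $w$, and by \refcut\ each contributes precisely the pair $\{q_a, q_b\}$; by hypothesis both members of each such pair are in $E_{k-1}$, and by \refcancel\ their contributions cancel in pairs. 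Hence the inner sum is zero for every $r$, so $\del^E \circ \del^E = 0$; one checks separately the trivial range $k > k^+(E)$ or $k < k^-(E) + 2$, where $\Z E_k = 0$ or $\Z E_{k-2} = 0$ and there is nothing to prove. For $1) \Rightarrow 2)$, I argue contrapositively: suppose there is a triple $(p,q,r) \in E_k \times E_{k-1} \times E_{k-2}$ giving an immersed heart $w$ whose second cutting partner $q' \notin E_{k-1}$. Then in the inner sum $\sum_{\tilde q \in E_{k-1}} n(p,\tilde q)\, n(\tilde q, r)$ the term coming from $q$ is present but the cancelling term from $q'$ is absent. The subtlety is that a priori some \emph{other} $\tilde q \in E_{k-1}$ might accidentally contribute a compensating term; but such a $\tilde q$ would, by \refglue\ and \refcut, have to be a cutting partner of $p$ and $r$ for \emph{some} heart in $\mcN(p,r)$, and the cutting partners pair up the distinct hearts in $\mcN(p,r)$ two at a time, so unmatched terms cannot cancel — the coefficient of $r$ in $(\del^E \circ \del^E)(p)$ is nonzero, contradicting $\del$-completeness.

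The main obstacle I anticipate is the bookkeeping in $1) \Rightarrow 2)$: making rigorous the claim that, once one cutting partner is removed from $E$, no spurious cancellation can rescue the sum. This requires knowing that the map ``heart $w \in \mcN(p,r)$'' $\mapsto$ ``unordered cutting pair $\{q_a, q_b\}$'' organizes \emph{all} the nonzero $(p, \cdot, r)$-contributions, and that distinct hearts give genuinely different geometric configurations (their boundaries differ, as remarked after \refcut), so that the sum over $E_{k-1}$ really does decompose as a disjoint union of cancelling pairs plus the one orphaned term. I would handle this by indexing the nonzero terms by the pair (heart $w$, choice of cutting vertex) and invoking \refcut's uniqueness statement ``distinct, unique $q_a, q_b$'' to get a bijection onto the set of such terms, after which the $n$-vs-$n_2$ remark and \refcancel\ finish the cancellation count.
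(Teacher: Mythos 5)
Your overall strategy is the same as the paper's: expand $(\del^E\circ\del^E)(p)$ into the double sum $\sum_{\tilde q}\sum_{r} n(p,\tilde q)\,n(\tilde q,r)\,r$, identify the nonzero terms with gluable pairs of di-gons, organize them by the immersed hearts they glue to via \refglue, and let \refcut\ together with \refcancel\ produce the cancelling pairs; your direction 2)$\Rightarrow$1) is carried out correctly in this way. The step that does not hold up as written is your dismissal of ``spurious cancellation'' in the direction 1)$\Rightarrow$2). You claim an unmatched term coming from a heart with a missing cutting partner cannot be cancelled by some other $\tilde q\in E_{k-1}$ because ``the cutting partners pair up the distinct hearts in $\mcN(p,r)$ two at a time''; but if $\mcNhat(p,r)$ could contain two distinct unparametrized hearts, each with exactly one of its two cutting partners in $E$, the two orphaned $\pm1$ contributions could have opposite signs, the coefficient of $r$ would vanish anyway, and $\del$-completeness would not be contradicted. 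The remark you cite after \refcut\ does not exclude this: it compares $\mcN(p,r)$ with $\mcN(p,r')$ for $r\neq r'$ and says nothing about two hearts with the same endpoints.

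The missing ingredient is uniqueness of the heart for a fixed pair $(p,r)$: since immersions of hearts are required to be immersions also on the boundary, each boundary arc is an immersed, hence monotone, arc of the one-dimensional $W^u$ resp.\ $W^s$ joining $p$ to $r$, so it equals $[p,r]_u$ resp.\ $[p,r]_s$; thus every element of $\mcNhat(p,r)$ has the same boundary and, by the same reasoning that gives $\abs{\mcMhat(p,q)}\leq 1$, one gets $\abs{\mcNhat(p,r)}\leq 1$. Consequently, for fixed $p\in E_k$ and $r\in E_{k-2}$, the inner sum $\sum_{\tilde q\in E_{k-1}}n(p,\tilde q)\,n(\tilde q,r)$ has at most two nonzero terms, namely those coming from the two cutting partners $q_a,q_b$ of the unique heart, and it vanishes if and only if both or neither of them lie in $E_{k-1}$. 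With this observation your contrapositive argument closes, and the proof then coincides with the paper's, which likewise rests on this pairing (asserted there directly via \refcancel\ and the uniqueness of cutting partners, without spelling out the heart-uniqueness).
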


\begin{proof}
Let $E \in \mcE$ and $p \in E$. We calculate for $p \in E_k$
\begin{align*}
   \del_{k-1}^E \circ \del_k^E (p)  =  \del^E_{k-1} \left( \sum_{q \in E_{k-1}} n(p,q) \ q  \right)
   =   \sum_{q \in E_{k-1}} \sum_{r \in E_{k-2}} n(p,q) \ n(q,r) \ r .
 \end{align*}
This expression vanishes if and only if for every $q \in E_{k-1}$ with $n(p,q) \ n(q,r)\neq 0$ there exists precisely one $\qti \in E_{k-1}$ with $n(p,\qti) \ n(\qti,r)\neq 0$ and
$$
n(p,q) \ n(q,r) = -n(p,\qti) \ n(\qti,r).
$$
According to \refcancel, this is precisely the case when $q$ and $\qti$ are cutting partners in the sense of \refcut. Since the cutting and gluing operation in \refcut\ and \refglue\ are inverse operations, the cutting partner of a given triple $(p,q,r) \in E_k \times E_{k-1} \times E_{k-2}$ is unique.
\end{proof}


\subsection{Homoclinic Floer homology of $\del$-complete sets}

Now we associate with each $\del$-complete set a welldefined Floer homology before we will come up with a solution for $\del$-incomplete sets.

Let $E \in \overline{\mcE}$ and consider $\Z E = \bigoplus_{k \in \Z} \Z E_k$ and endow it with the boundary operator defined in Equation \eqref{defDel}. Since $E$ is $\del$-complete, we have $\del^{E}_{k-1} \circ \del^{E}_k =0$ for all $k \in \Z$ in
\begin{equation}
\label{localChainComplex}
\dots \stackrel{\del^{E}_{k+1}}{\longrightarrow} \Z E_k  \stackrel{\del^{E}_k}{\longrightarrow} \Z E_{k-1} \stackrel{\del^{E}_{k-1}}{\longrightarrow} \Z E_{k-2}  \stackrel{\del^{E}_{k-2}}{\longrightarrow} \dots
\end{equation}
so that $(\Z E, \del^{E})$ is a chain complex. This observation gives rise to

\begin{definition}
\label{newDefLocalFH}
 The {\em Floer chain groups} of $E\in \overline{\mcE}$ are given by
 $$
 C_k(E):= C_k(\phi, x, E):= \Z E_k \quad \mbox{for all } k \in \Z
 $$
 and the boundary operator $\del^E_k:C_k(E) \to C_{k-1}(E)$ is defined on the generators $p \in E_k$ by
 $$
 \del^E_k(p):=\sum_{q \in E_{k-1}} n(p,q) \ q
 $$
 and is extended to $C_k(E)$ by linearity. The homology $H(E):=H(\phi, x, E)$ of the chain complex $(C(E), \del^E)$ is given in each degree by
 $$
 H_k(E):=H_k(\phi, x, E):= \frac{\ker \bigl(\del^E_k :C_k(E) \to C_{k-1}(E) \bigr)}{\Img \bigl(\del^E_{k+1}:C_{k+1}(E) \to C_{k}(E) \bigr)}
 $$
 and is called {\em (local) homoclinic Floer homology of $E \in \overline{\mcE}$}.
\end{definition}

\begin{remark}
 Choosing a different orientation on $W^u$ or working instead with orientations on $W^s$ may flip the sign of $n(p,q)$, i.e., instead of working with $\del$ we may end up working with $-\del$. Since the kernel and image of $\del$ and $-\del$ coincide, the resulting homology is independent of the choice of orientation. This also holds true when working with $W$-orientation reversing symplectomorphisms and $n_2(p,q)$. For more details see also Hohloch \cite[Section 3.3]{hohloch1}.
\end{remark}


\subsection{Pruning algorithm and Floer homology of $\del$-incomplete sets}
\label{pruningAlgo}

When a set $E \in \mcE$ is $\del$-incomplete then, according to \refgeomeDelComplete, there are cutting partners missing in $E$.


\subsubsection{First idea (unsuccessful): add missing cutting partners}
\label{sec:addCutPartners}

When cutting partners are missing, a natural idea is to add them to the original set to obtain a new set that is $\del$-complete. The main problem with this approach is that adding all missing cutting points may not be a finite process:

Let $E \in \mcE$ be $\del$-incomplete and consider its points with top Maslov degree $k_+:=k_+(E)$ so the first Maslov degree potentially containing cutting partners is $k_+-1$. Now assume that there are cutting partners missing in degree $k_+-1$. Now add them, thus making all hearts arisen from points of Maslov degrees $(k_+, k_+-1, k_+-2)$ $\del$-complete, and denote the new (finite) set by $A_1(E)$.

Now look in $A_1(E)$ for hearts with points of Maslov degrees $(k_+-1, k_+-2, k_+-3)$ with missing cutting partner in degree $k_+-2$. Add the necessary cutting partners and thus obtain a new (finite) set $A_2(E)$. The problem now is that these new points in degree $k_+-2$ may cause again trouble in degree $k_+-1$ by forming $\del$-incomplete hearts of degree $(k_+, k_+-1, k_+-2)$. But by creating the set $A_1(E)$ the $\del$-completeness in Maslov level $k_+-1$ was already supposed to be solved! Fixing this issue now may again create problems in Maslov level $k_+-2$ etc.

Similar problems arise analogously in Maslov levels lower than $k_+-1$ and $k_+-2$. Thus there is no guarantee that the process of adding missing cutting partners terminates within a finite number of steps, more precisely, we did not find a way to prove that it terminates.
So we have to come up with a different idea how to obtain a $\del$-complete set from a $\del$-incomplete one.


\subsubsection{Second idea (successful): delete points without cutting partners}
\label{sec:pruning}

Since adding the missing cutting points does not (yet?) work, see Section \ref{sec:addCutPartners}, let us now try to remove cutting points that do not have an associated cutting partner. Since there are only finitely many points in any given set $E \in \mcE \setminus \bar{\mcE}$, this procedure certainly will terminate within finitely many steps.

\vsp

\noindent
{\em Pruning algorithm:}
\begin{enumerate}[label=\arabic*)]
\item
Let $E \in \mcE$ with $k_+:=k_+(E)$ and $k_-:=k_-(E)$.
If $E \in \overline{\mcE}$ then $E$ is already $\del$-complete and nothing further needs to be done. If $E \in \mcE \setminus \overline{\mcE}$ then proceed to the following step.
 \item
 Since the boundary operator of a chain complex (and of a homology) always lowers the degree by one, we start from the top degree level downwards.
Now go systematically through all triples $(p, q, r) \in E_{k_+} \times E_{k_+ -1} \times E_{k_+-2}$ check if they give rise to a heart and, if yes, check wether the associated cutting partner $q'$ exists, i.e., if there is a triple $(p, q', r) \in E_{k_+} \times E_{k_+ -1} \times E_{k_+-2}$ that give rise to the same heart. If the triple gives rise to a heart, but there is no cutting partner $q'$ then delete $q$. Otherwise proceed until all triples have been checked once. Call the resulting set $P_1(E)$. Note that we have $0 \leq \abs{P_1(E)} \leq \abs{E} < \infty$.
\item
Repeat the previous step for the set $P_1(E)$. If there were no additional points deleted then call the unchanged set $P(E,{k_+})$ and continue to the next step. Otherwise call the set $P_2(E)$ and repeat this step (calling the new sets $P_3(E)$ etc.) until there are no points deleted any more. Note that we have $0 \leq \abs{P(E, {k_+})} \leq \abs{E} < \infty$ and in particular $0 \leq \abs{(P(E, {k_+}))_{k_+-1}} \leq \abs{E_{k_+-1}} \leq \abs{E}< \infty$. Thus there can only be finitely many repetitions.
\item
Now repeat the two previous steps for all triples in $ E_{k_+-k} \times E_{k_+ -(k+1)} \times E_{k_+-(k+2) }$ starting with $k=1$, then considering $k=2$ etc.\ until $k= k_+ -k_- -2$ is reached, i.e., until all triples including those in $ E_{k_-+2} \times E_{k_- +1} \times E_{k_- }$ are processed.
Since $0 \leq \abs{ P(E, k_-+2)} \leq  \abs{ P(E, k_+-k)} \leq \abs{ P(E, k_+)} \leq \abs{E}< \infty$ for all $k_+ \geq k \geq k_+ -k_- -2$ there can only be finitely many repetitions.
\item
Write $\overline{E}:= P(E, k_-+2)$ to shorten notation.
\end{enumerate}

This yields

\begin{theorem}
Let $E \in \mcE$. Then the set $\overline{E} $ obtained by the pruning algorithm is finite, $\del$-complete, and unique in the sense that checking, per level, the triples in a different order does not lead to a set $\overline{E}' $ different from $\overline{E} $.
\end{theorem}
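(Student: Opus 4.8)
The plan is to establish the three assertions of the theorem separately: finiteness (together with termination), $\del$-completeness, and order-independence, with the last being the real content. Finiteness and termination are immediate from the shape of the algorithm. Every step only removes points, so $\overline{E}\subseteq E$ and hence $\abs{\overline{E}}\le\abs{E}<\infty$; within the processing of a single Maslov level each pass over the triples either deletes at least one point or deletes none (in which case that level is declared stable), so at most $\abs{E}$ passes are needed, and there are only $k_+(E)-k_-(E)-1$ middle-degree levels to go through. I will write this out, but it uses no new idea.

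For $\del$-completeness I would reduce to condition 2) of \refgeomeDelComplete: it suffices to show that for every $d$ with $k_-(\overline{E})+1\le d\le k_+(\overline{E})-1$ and every triple $(p,q,r)\in\overline{E}_{d+1}\times\overline{E}_d\times\overline{E}_{d-1}$ giving rise to a heart in the sense of \refglue, the cutting partner of $q$ (from \refcut) lies in $\overline{E}_d$. The two observations that drive this are: (i) immediately after the algorithm has finished the level whose middle degree is $d$, the current set $F$ is \emph{clean} at level $d$ — if some heart-triple at degrees $(d+1,d,d-1)$ in $F$ lacked its cutting partner, one more pass would delete the middle vertex, contradicting stability of that level; and (ii) every later step deletes only points of Maslov degree strictly below $d$, so the degree-$d$ vertices (in particular any cutting partner found in (i)) are never removed afterwards. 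Since the levels are processed strictly top-down and $\overline{E}\subseteq F$, any heart-triple surviving in $\overline{E}$ at degrees $(d+1,d,d-1)$ already occurred in $F$ with its cutting partner present in $F_d$, and that partner persists in $\overline{E}_d$. Ranging over all relevant $d$ — the processed range $\{k_+(E)-1,\dots,k_-(E)+1\}$ contains the range needed for $\overline{E}$ — gives condition 2), hence $\del^{\overline{E}}\circ\del^{\overline{E}}=0$.

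The main obstacle is uniqueness, i.e.\ confluence of the per-level pruning under reordering of the triple checks. Here I would show that the stabilised set produced by processing one level is a canonical object, namely the largest \emph{admissible} subset. When the middle degree $d$ is processed, $F_{d+1}$ (the output of the previous level) and $F_{d-1}$ (still equal to the original $E_{d-1}$) are frozen while $F_d=E_d$ shrinks; call $G\subseteq F_d$ admissible if for every heart-triple $(p,q,r)\in F_{d+1}\times G\times F_{d-1}$ the cutting partner of $q$ is again in $G$. Using the symmetry of cutting partners supplied by \refcut\ and \refcancel\ (the two cutting points $q_a,q_b$ of a heart are partners of each other), each constraint defining admissibility is either an equivalence ``$q_a\in G\iff q_b\in G$'' (when $q_a,q_b\in F_d$) or a prohibition ``$q_a\notin G$'' (when the partner of $q_a$ is not in $F_d$), and both forms are preserved under arbitrary unions. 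Hence the admissible subsets form a family with a greatest element $G^{\max}_d$.

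To finish, I would argue that any run of the per-level procedure outputs exactly $G^{\max}_d$. A short induction on the order of deletions shows each deleted point is absent from $G^{\max}_d$: when a vertex $q$ is removed, its current partner is either never in $F_d$, so $q\notin G^{\max}_d$ by the prohibition, or was deleted earlier, so by induction it is not in $G^{\max}_d$ and then neither is $q$, by the equivalence; thus the output contains $G^{\max}_d$. Conversely the output is stable, hence admissible, hence contained in $G^{\max}_d$. So the output equals $G^{\max}_d$, independently of the order. Since the levels are then processed in the fixed order $k_+(E)-1, k_+(E)-2,\dots$ and the input data of each level is thereby determined, an induction over levels gives that $\overline{E}$ does not depend on any ordering choice. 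The delicate point, and the one I would check most carefully, is the closure-under-union of admissible sets: it rests entirely on the partnership of cutting points being a genuine symmetric relation attached to each heart, which is precisely what \refcut\ and \refcancel\ provide.
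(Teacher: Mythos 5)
Your proof is correct, and while the finiteness and $\del$-completeness parts run essentially parallel to the paper's (the paper makes the ``later deletions are harmless'' point by an explicit cancellation computation $\del\circ\del(p)=(q-q')\mapsto r-r=0$, you make it by observing that all subsequent deletions occur in Maslov degrees strictly below the level just stabilised — same content, slightly cleaner bookkeeping), your treatment of uniqueness is genuinely different from, and more systematic than, the paper's. The paper argues order-independence by fixing one representative configuration (two hearts sharing a cutting point $q$, with $q$ lacking a partner in one of them) and checking the three possible orders in which the algorithm could encounter the relevant triples, concluding each time that the same points $q,q'$ disappear; this is an illustrative local analysis resting on the uniqueness of cutting partners from \refcut. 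You instead characterise the output of each level intrinsically: freezing the adjacent levels, the partner constraints coming from \refcut\ and \refcancel\ are either equivalences ($q_a\in G\iff q_b\in G$) or prohibitions ($q_a\notin G$), both stable under unions, so the admissible subsets of the level have a greatest element $G^{\max}_d$; a short induction on the deletion order shows every run outputs exactly $G^{\max}_d$, and induction over the (fixed, top-down) level order finishes the claim. What your route buys is full confluence for arbitrary configurations and an order-free description of $\overline{E}$ (per level, the largest partner-closed subset), which also substantiates the paper's subsequent remark that $\overline{E}$ is the largest $\del$-complete subset of $E$; what the paper's route buys is brevity and a concrete picture of why a deleted point drags its partner along. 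Your identification of the closure-under-union of admissible sets as the crux is exactly right, and it does hold because the cutting-partner relation attached to each heart is a well-defined symmetric pairing by \refcut.
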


\begin{proof}
{\em Finiteness:}
The set $\overline{E}:= P(E, k_-+2)$ resulting from the pruning algorithm is finite since
$$0 \leq \abs{\overline{E}} =\abs{ P(E, k_-+2)} \leq \abs{E} < \infty.$$

\noindent
{\em $\del$-completeness:}
After completing the pruning algorithm at level $k_+-k$, any heart formed by a triple $(p,q,r) \in E_{k_+-k} \times E_{k_+ -(k+1)} \times E_{k_+-(k+2) }$ gives rise to another triple $(p,q',r)$ where $q'$ is the cutting partner of $q$. Thus \refgeomeDelComplete\ is valid.

Moreover, when proceeding to level $k_+-(k+1)$ and performing the pruning algorithm, then maybe certain $r \in E_{k_+-(k+2)}$ will be deleted. But this does not destroy the property $\del_{k_+-k} \circ \del_{k_+-(k+1)}=0$ as we will see: consider a a heart containing $(p,q,r)$ and $(p,q',r)$, i.e., $q$ and $q'$ are cutting partners. Restricted to this heart, the boundary operator vanishes since
$$\del_{k_+-k} \circ \del_{k_+-(k+1)}(p) = \del_{k_+-k} (q-q') = \del_{k_+-k} (q) -  \del_{k_+-k} (q')=r-r =0.$$
After deleting the point $r$, we still find
$$\del_{k_+-k} \circ \del_{k_+-(k+1)}(p) = \del_{k_+-k} (q-q') =  \del_{k_+-k} (q) -  \del_{k_+-k} (q')= 0-0 =0.$$

{\em Uniqueness:} According to \refcut, cutting partners are unique. We now show that it does not matter when a point without cutting partner gets deleted: Assume that there is a heart formed by triples $(p,q,r)$ and $(p,q',r)$ containing the cutting partners $q$ and $q'$ and assume further that $q$ also appears in the heart formed by a triple $(\pti, q, \rti)$, but does not have a cutting partner $\qti$ forming the heart $(\pti, \qti, \rti)$.

If the algorithm happens to check first $(p,q',r)$ for potential lack of cutting partners, then, due to the existence of $(p,q,r)$, the point $q'$ is not deleted. But when $(\pti, q, \rti)$ is checked then the lack of the cutting partner $\qti$ causes the deletion of $q$. Then, when going though the next round of the pruning algorithm, $(p,q',r)$ now lacks its cutting partner $(p,q,r)$ so that $q'$ gets deleted, i.e., now $q$ and $q'$ are deleted.

If the algorithm happens to check $(p,q, r)$ first, then due to the existence of $(p,q',r)$, the point $q$ is not deleted. But when $(\pti, q, \rti)$ is checked the lack of the cutting partner $\qti$ causes the deletion of $q$. Then, in the next round of the pruning algorithm, $(p,q',r)$ lacks its cutting partner $(p,q,r)$ so that $q'$ gets deleted, i.e., now $q$ and $q'$ are deleted.

If the pruning algorithm happens to check $(\pti, q, \rti)$ first then $q$ gets deleted since $(\pti, q, \rti)$ has no cutting partner. Thus $(p,q',r)$ now lacks its cutting partner $(p,q,r)$ so that $q'$ gets deleted, i.e., now $q$ and $q'$ are deleted.
\end{proof}

Moreover

\begin{remark}
 Since the pruning algorithm deletes precisely those points that cause a set to be $\del$-incomplete, the resulting set is the largest $\del$-complete set contained in the original set.
\end{remark}

Since the pruning algorithm assigns to a $\del$-incomplete set $E$ a unique $\del$-complete set $\overline{E}$, we can now define Floer homology also for $\del$-incomplete sets:

\begin{definition}
\label{defLocalFH}
The {\em Floer chain groups} of $E\in \mcE$ are given by
 $$
 C_k(E):= C_k(\phi, x, E):= C_k(\phi, x, \overline{E})= \Z {\overline{E}}_k \quad \mbox{for all } k \in \Z
 $$
 and the boundary operator $\del^E_k: C_k(E) \to C_{k-1}(E)$ is defined on the generators $p \in {\overline{E}}_k$ by
 $$
 \del^E_k(p):=\del^{\overline{E}}_k (p)= \sum_{q \in \overline{E}_{k-1}} n(p,q) \ q
 $$
 and is extended to $C_k(E)$ by linearity. The homology $H(E):=H(\phi, x, E)$ of the chain complex $(C(E), \del^E)$ is given in each degree by
 $$
 H_k(E):=H_k(\phi, x, E):= \frac{\ker \bigl(\del^E_k :C_k(E) \to C_{k-1}(E) \bigr)}{\Img \bigl(\del^E_{k+1}:C_{k+1}(E) \to C_{k}(E) \bigr)}
 $$
and is called {\em (local) homoclinic Floer homology of $E \in \mcE$}. Note that this definition agrees for $\del$-complete sets $E$ with the original \refnewDefLocalFH.
\end{definition}

Since $H(E)$ only takes the homoclinic points in $E$ resp.\ $\overline{E}$ into account, these chain groups and homology groups measure only `local' properties of $\mcH$, namely `behaviour near' $E$ resp.\ $\overline{E}$ which are `very small' subsets of $\mcH$.


\section{Background from homological algebra}
\label{sec:backgroundHomAlg}

In what follows, we summerize some definitions and facts from homological algebra that we will need later on. We refer to Rotman's \cite{rotman} and Weibel's \cite{weibel} text books for details. Whereas the first book mainly works with directed sets and systems, the latter one formulates it more generally for filtered categories.


\subsection{Left $R$-modules}

Since we are later on mainly interested in freely generated groups and their quotients it suffices to recall notions from homological algebra within the following setting.

\begin{definition}
Let $R$ be a ring with multiplicative identity $1_R$.
 A {\em left $R$-module $T$} consists of an abelian group $(T , +)$ endowed with an operation $R \x T \to T$, $(r, p) \mapsto r.p$ satisfying for all $r,s \in R$ and all $p,q \in T$
 \begin{enumerate}[label=\arabic*)]
  \item
  $r.(p+q) = r.p + r.q$,
  \item
  $(r+s).p = r.p + s.p$,
  \item
  $(rs).p = r.(s.p)$,
  \item
  $1_R.p = p$.
 \end{enumerate}
\end{definition}

\noindent
Thus, intuitively, a module is a `vector space over a ring'.

\begin{remark}
 Let $(T, +)$ be an abelian group and $\Z$ the ring of integers. By means of
 $$
 \Z \x T \to T, \quad (n, p) \mapsto n.p:=
 \left\{
 \begin{aligned}
  &  \underbrace{p + \dots + p}_{n}, \quad \mbox{for } n \in \Z^{\geq 0}, \\
  & -((-n).p), \quad \mbox{for } n \in \Z^{<0}
 \end{aligned}
 \right.
$$
we turn $T$ into a $\Z$-modul, i.e., every abelian group can be seen as a $\Z$-module.
\end{remark}




\subsection{Direct limit}

Let us fix some notions and notations.

\begin{definition}
A set $I$ with a binary relation $\preceq$ is said to be a {\em partially ordered set}, briefly called a {\em poset}, if
\begin{enumerate}[label=\arabic*)]
 \item
 \ $\preceq$ is reflexive, i.e.,
 $\forall\ i \in I: i \preceq i$.
 \item
 \ $\preceq$ is antisymmetric, i.e., if $i \preceq j$ and $j\preceq i$ then $i=j$.
 \item
 \ $\preceq$ is transitive, i.e.,
 $\forall\ i,j,k \in I$ with $i \preceq j$ and $j \preceq k$ follows $i \preceq k$.
\end{enumerate}
A partially ordered set $(I, \preceq)$ is said to be {\em directed} if for all $i, j \in I$, there exists $k=k(i,j) \in I$ such that $i \preceq k$ and $j \preceq k$, i.e., each pair of elements has a `common upper bound'.
\end{definition}

\begin{definition}
 Let $(I, \preceq)$ be a partially ordered set and $\msC$ a category. A {\em direct system in $\msC$} is an indexed family
 $$
 \{ C^i, \ga^{ij} \} := \left( (C^i)_{i \in I}, (\ga^{ij})_{\stackrel{i, j \in I}{i \preceq j}} \right)
 $$
 where $C^i \in \Obj(\msC)$ for all $i \in I$ and $\ga^{ij} \in \Morph(C^i, C^j)$ for all $i, j \in I$ with $i \preceq j$ satisfying in addition
 \begin{enumerate}[label=\arabic*)]
  \item
    \ $\ga^{ii}= \Id_{C^i} $ for all $i \in I$.
  \item
  \ $\ga^{jk} \circ \ga^{ij} = \ga^{ik}$ for all $i, j, k \in I$ with $i \preceq j \preceq k$.
  \end{enumerate}
\end{definition}

\noindent
Given such a direct system, we are interested in the following.

\begin{definition}
 Let $(I, \preceq)$ be a partially ordered set, $\msC$ a category, and $\{ C^i, \ga^{ij} \}$ a direct system in $\msC$. The associated {\em direct limit} (if it exists) is given by an object $\varinjlim C^i \in \Obj(\msC) $ together with a family of morphisms $\ga^k \in \Morph(C^k, \varinjlim C^i)$ for all $k \in I$ such that
 \begin{enumerate}[label=\arabic*)]
  \item
  $\ga^j \circ \ga^{ij} = \ga^i$ for all $i, j \in I$ with $i \preceq j$.
  \item
  Given $\Cti \in \Obj(\msC)$ with $\gati^i \in \Morph(C^i, \Cti)$ for all $i \in I$ and satisfying $\gati^j \circ \ga^{ij} = \gati^i$ for all $i, j \in I$ with $i \preceq j$ then there exists a unique $\theta \in \Morph( \varinjlim C^i, \Cti)$ such that $\theta \circ \ga^i = \gati^i$ for all $i \in I$.
 \end{enumerate}
The direct limit is often also called {\em inductive limit} or {\em colimit}.
\end{definition}

If the direct limit exists it is therefore unique up to (unique) isomorphism. Rotman shows in \cite[Proposition 5.23, Lemma 5.30, Corollary 5.31]{rotman} the following.

\begin{proposition}
\label{existenceDirectLim}
Let $R$ be a ring, $(I, \preceq)$ a partially ordered set and $\{C^i, \ga^{ij}\}$ a direct system of left $R$-modules. For $i \in I$, denote by $\lam^i : C^i \to \bigoplus _{i \in I} C^i$ the insertion morphisms.
\begin{enumerate}[label=\arabic*)]
 \item
 Then the direct limit $\varinjlim C^i $ exists and is given by
$$
\varinjlim C^i = \left. \left( \bigoplus _{i \in I} C^i \right) \right\slash S
$$
where $S$ is the submodule of $ \bigoplus _{i \in I} C^i$ generated by all elements of the form $\lam^j(\ga^{ij}(c^i)) - \lam^i(c^i)$ with $i, j \in I$ and $i \preceq j$.
 \item
 If $(I, \preceq)$ is in addition directed, then the following holds true:
 \begin{itemize}
  \item
  Each element of $\varinjlim C^i$ has a representative of the form $\lam^i ( c^i) + S$ (instead of $\sum_{i}\lam^i (c^i) + S$) where $c^i \in C^i$.
  \item
  For all $i \in I$ and all $c^i \in C^i$, we have $\lam^i( c^i) + S =0$ if and only if there exists $k=k(i) \in I$ with $i \preceq k$ such that $\ga^{ik} (c^i) =0$.
  \item
  Given $c^i \in C^i$ and $c^j \in C^j$ for $i, j \in I$, we define $c^i \sim c^j$ if and only if there exists $k=k(i,j) \in I$ with $i \preceq k$ and $j \preceq k$ such that $\ga^{ik}(c^i) = \ga^{jk}(c^j)$. This is an equivalence relation on the disjoint union $\bigsqcup_{i \in I} C^i$ giving rise to the isomorphism
  $$
  \varinjlim C^i \quad \simeq \quad \left. \left( \bigsqcup _{i \in I} C^i \right)\right\slash \sim
  $$
  Thus elements of $\varinjlim C^i$ can be seen as equivalence classes $\llbracket c^i \rrbracket$ where $c^i \in C^i$ with addition defined by
  $$\llbracket c^i \rrbracket + \llbracket c^j \rrbracket = \llbracket \ga^{ik}(c^i) + \ga^{jk}(c^j) \rrbracket.$$
 \end{itemize}
\end{enumerate}
\end{proposition}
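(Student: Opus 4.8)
The plan is to construct the candidate for $\varinjlim C^i$ explicitly as the quotient $L/S$ with $L := \bigoplus_{i\in I} C^i$, verify the universal property by hand, and then, under the directedness hypothesis, refine the description of $L/S$ via the standard ``eventually equal'' argument. For item 1) I would set $\gamma^i := \pi\circ\lam^i$, where $\pi: L \to L/S$ is the canonical projection. The relation $\gamma^j\circ\ga^{ij} = \gamma^i$ is immediate, since $\lam^j(\ga^{ij}(c^i)) - \lam^i(c^i)$ is by definition a generator of $S$ and hence maps to $0$ under $\pi$. For the universal property, given $\Cti$ with compatible morphisms $\gati^i$, the universal property of the direct sum yields a unique $R$-linear $\Phi: L \to \Cti$ with $\Phi\circ\lam^i = \gati^i$; one checks that $\Phi$ annihilates each generator of $S$ (this is precisely the compatibility $\gati^j\circ\ga^{ij}=\gati^i$), so $\Phi$ factors as $\Phi = \theta\circ\pi$ for a unique $\theta: L/S \to \Cti$, which then satisfies $\theta\circ\gamma^i = \gati^i$. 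Uniqueness of $\theta$ follows because $L/S$ is generated by the images of the $\gamma^i$. None of this uses directedness.

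For item 2), assume $(I,\preceq)$ is directed. The single-index representative claim is an induction on the number of summands: given a general element $\sum_{n=1}^m \lam^{i_n}(c^{i_n})$ of $L$, pick an upper bound $k$ of $i_1,\dots,i_m$ and use $\lam^{i_n}(c^{i_n}) \equiv \lam^k(\ga^{i_n k}(c^{i_n}))$ modulo $S$ to collapse the sum into the single term $\lam^k\bigl(\sum_n \ga^{i_n k}(c^{i_n})\bigr)$. The vanishing criterion $\lam^i(c^i) + S = 0 \iff \exists\, k\succeq i: \ga^{ik}(c^i) = 0$ has an easy direction: if $\ga^{ik}(c^i) = 0$ then $\lam^i(c^i) = \lam^i(c^i) - \lam^k(\ga^{ik}(c^i))$, which is a generator of $S$ up to sign.

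The one genuine obstacle is the converse: deducing from $\lam^i(c^i)\in S$ that $\ga^{ik}(c^i) = 0$ for some $k\succeq i$. Here I would write $\lam^i(c^i)$ as a \emph{finite} $R$-linear combination of generators of $S$, let $k$ be a common upper bound of $i$ together with all of the (finitely many) indices occurring in that expression, and introduce the auxiliary $R$-linear map $f_k: L \to C^k$ determined by $f_k\bigl(\lam^\ell(c^\ell)\bigr) := \ga^{\ell k}(c^\ell)$ when $\ell\preceq k$ and $:= 0$ otherwise. Using $\ga^{jk}\circ\ga^{ij} = \ga^{ik}$ one checks that $f_k$ annihilates every generator of $S$ both of whose indices lie below $k$; in particular it kills the chosen finite expression, so $\ga^{ik}(c^i) = f_k(\lam^i(c^i)) = 0$. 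The delicate point is that $f_k$ need not annihilate all of $S$, only the part supported below $k$, which is why the index $k$ must be chosen \emph{after} fixing the finite expression; this is the step I expect to require the most care.

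Finally, for the equivalence-relation picture, define $c^i\sim c^j$ iff $\ga^{ik}(c^i) = \ga^{jk}(c^j)$ for some common upper bound $k$ of $i$ and $j$; reflexivity and symmetry are trivial, and transitivity follows from directedness by passing to a common upper bound of the two witnessing indices. The assignment $\bigl(\bigsqcup_i C^i\bigr)\big/{\sim}\ \to\ L/S$, $[c^i] \mapsto \lam^i(c^i) + S$, is well defined by the easy direction of the vanishing criterion, surjective by the single-index representative claim, and injective by applying the hard direction of the vanishing criterion to $\ga^{ik}(c^i) - \ga^{jk}(c^j)$ after a common bound $k\succeq i,j$. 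That the stated formula $\llbracket c^i\rrbracket + \llbracket c^j\rrbracket = \llbracket \ga^{ik}(c^i) + \ga^{jk}(c^j)\rrbracket$ gives a well-defined $R$-module structure independent of $k$ and of representatives, and that it corresponds under the above bijection to the addition of $L/S$, is then routine bookkeeping with $\ga^{jk}\circ\ga^{ij} = \ga^{ik}$ and directedness. Once the converse vanishing criterion is established, everything else is formal.
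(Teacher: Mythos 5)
Your proof is correct; note that the paper itself does not prove this proposition but simply cites Rotman (Proposition 5.23, Lemma 5.30, Corollary 5.31), and your argument --- the quotient $\bigl(\bigoplus_{i\in I} C^i\bigr)\slash S$ with the universal property checked directly, the auxiliary map $f_k$ built from a common upper bound of the finitely many indices in a given relation to establish the vanishing criterion, and the ``eventually equal'' equivalence-relation description in the directed case --- is precisely the standard textbook proof given there. In particular, your handling of the one delicate point (choosing $k$ only after fixing a finite expression of $\lam^i(c^i)$ in terms of generators of $S$, since $f_k$ need not kill all of $S$) is exactly right.
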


\begin{definition}
Let $(I, \preceq)$ be a partially ordered set and $A:=\{A^i, \al^{ij}\}$ and $B:=\{B^i, \be^{ij}\}$ direct systems over $(I, \preceq)$. A {\em morphism of direct systems} is a map $g: A \to B$ consisting of maps $g^i: A^i \to B^i$ for all $i \in I$ such that the following diagram commutes for all $i, j \in I$ with $i \preceq j$:
$$
\begin{array}{ccc}
A^i & \stackrel{g^i}{\longrightarrow} & B^i \\
{\scriptstyle \al^{ij}} \downarrow & & \downarrow {\scriptstyle  \be^{ij}} \\
A^j & \stackrel{g^j} {\longrightarrow}& B^j
\end{array}
$$
\end{definition}

\begin{remark}
Let $(I, \preceq)$ be a partially ordered set and $A:=\{A^i, \al^{ij}\}$ and $B:=\{B^i, \be^{ij}\}$ direct systems over $(I, \preceq)$. Recall from \refexistenceDirectLim\ the notations $\varinjlim A^i = \left(\oplus_{i \in I} A^i \right) \slash S^A$ and $\varinjlim B^i = \left(\oplus_{i \in I} B^i \right) \slash S^B$ and the insertion morphisms $\lam^i_A: A^i \to \oplus_{i \in I} A^i$ and $\lam^i_B: B^i \to \oplus_{i \in I} B^i$.
Then a morphism of direct systems $g: A \to B$ gives rise to a morphism
$$
\underscriptrightarrow{g}: \varinjlim A^i \to \varinjlim B^i, \qquad \underscriptrightarrow{g} \left(\sum \lam^i_A (a^i) + S^A \right) :=  \sum \lam^i_B (g^i(a^i)) + S^B.
$$
\end{remark}

Moreover, we will need the following important property of the direct limit within the category of left $R$-modules when taken over directed sets:

\begin{proposition}[\cite{rotman}, Proposition 5.33]
\label{shortDirectLim}
 Let $(I, \preceq)$ be a directed set and $A:=\{A^i, \al^{ij}\}$ and $B:=\{B^i, \be^{ij}\}$ and $C:=\{C^i, \ga^{ij}\}$ direct systems of left $R$-modules over $(I, \preceq)$. Let $g: A \to B$ and $h: B \to C$ be morphisms of direct systems forming a short exact sequence of direct systems
 $$
 0 \to A \stackrel{g}{\to} B \stackrel{h}{\to} C \to 0,
 $$
 i.e., for all $i \in I$,
 $$
 0 \to A^i \stackrel{g^i}{\to} B^i \stackrel{h^i}{\to} C^i \to 0
 $$
is a short exact sequence. Then there is the short exact sequence
 $$
 0 \longrightarrow \varinjlim A^i \stackrel{\underscriptrightarrow{g}}{\longrightarrow} \varinjlim B^i \stackrel{\underscriptrightarrow{h}}{\longrightarrow} \varinjlim C^i \longrightarrow 0
 $$
 \end{proposition}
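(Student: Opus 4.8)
The plan is to argue directly from the explicit description of the direct limit over a directed set furnished by \refexistenceDirectLim. First I would record what is already available: by the Remark preceding the statement, the induced maps $\underscriptrightarrow{g}\colon\varinjlim A^i\to\varinjlim B^i$ and $\underscriptrightarrow{h}\colon\varinjlim B^i\to\varinjlim C^i$ are well-defined $R$-module homomorphisms with $\underscriptrightarrow{g}\llbracket a^i\rrbracket=\llbracket g^i(a^i)\rrbracket$ and $\underscriptrightarrow{h}\llbracket b^i\rrbracket=\llbracket h^i(b^i)\rrbracket$, using the single-representative form $\llbracket\,\cdot\,\rrbracket$ legitimized by \refexistenceDirectLim. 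It then remains to check exactness of
$$
0\longrightarrow\varinjlim A^i\stackrel{\underscriptrightarrow{g}}{\longrightarrow}\varinjlim B^i\stackrel{\underscriptrightarrow{h}}{\longrightarrow}\varinjlim C^i\longrightarrow 0
$$
at its three spots, and the leitmotif in each case is to use directedness of $(I,\preceq)$, via the vanishing criterion of \refexistenceDirectLim (namely $\llbracket c^i\rrbracket=0$ iff $\gamma^{ik}(c^i)=0$ for some $k\succeq i$, and likewise for $A$ and $B$), to transport the problem to a single index where the pointwise exactness of $0\to A^i\to B^i\to C^i\to 0$ can be invoked.

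Second I would dispatch the two easy ends. For injectivity of $\underscriptrightarrow{g}$: if $\llbracket g^i(a^i)\rrbracket=0$ then $\beta^{ik}(g^i(a^i))=0$ for some $k\succeq i$; commutativity of the direct-system morphism gives $\beta^{ik}\circ g^i=g^k\circ\alpha^{ik}$, so $g^k(\alpha^{ik}(a^i))=0$, and injectivity of $g^k$ (from exactness at $A^k$) forces $\alpha^{ik}(a^i)=0$, i.e.\ $\llbracket a^i\rrbracket=0$. For surjectivity of $\underscriptrightarrow{h}$: given $\llbracket c^i\rrbracket$, pick $b^i\in B^i$ with $h^i(b^i)=c^i$ (possible since $h^i$ is onto), whence $\underscriptrightarrow{h}\llbracket b^i\rrbracket=\llbracket c^i\rrbracket$.

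Third, exactness at the middle term. The inclusion $\Img\underscriptrightarrow{g}\subseteq\ker\underscriptrightarrow{h}$ is immediate from $h^i\circ g^i=0$, giving $(\underscriptrightarrow{h}\circ\underscriptrightarrow{g})\llbracket a^i\rrbracket=\llbracket h^i(g^i(a^i))\rrbracket=0$. For the reverse inclusion, let $\llbracket b^i\rrbracket\in\ker\underscriptrightarrow{h}$, so $\llbracket h^i(b^i)\rrbracket=0$ and hence $\gamma^{ik}(h^i(b^i))=0$ for some $k\succeq i$; commutativity gives $h^k(\beta^{ik}(b^i))=0$, so $\beta^{ik}(b^i)\in\ker h^k=\Img g^k$ by exactness at $B^k$, and choosing $a^k\in A^k$ with $g^k(a^k)=\beta^{ik}(b^i)$ yields $\underscriptrightarrow{g}\llbracket a^k\rrbracket=\llbracket g^k(a^k)\rrbracket=\llbracket\beta^{ik}(b^i)\rrbracket=\llbracket b^i\rrbracket$, since $\beta^{ik}(b^i)$ and $b^i$ name the same element of $\varinjlim B^i$.

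The genuinely delicate step will be this last inclusion $\ker\underscriptrightarrow{h}\subseteq\Img\underscriptrightarrow{g}$: it is the place where one must simultaneously advance far enough along the system to make $\ker h^k=\Img g^k$ applicable and keep track that the new representative $\beta^{ik}(b^i)$, and then $\llbracket a^k\rrbracket$, still designates the original limit element. Everything else is routine bookkeeping with the representative form and the vanishing criterion. I would also flag that directedness is used essentially and not merely for convenience: without it \refexistenceDirectLim no longer supplies single-representative descriptions, the vanishing criterion fails, and exactness of the limit in the middle term can genuinely break down.
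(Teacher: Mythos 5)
Your proof is correct: the paper does not prove this proposition itself but imports it verbatim from Rotman (Proposition 5.33), and your argument --- using the single-representative form $\llbracket\,\cdot\,\rrbracket$ and the vanishing criterion of \refexistenceDirectLim\ over the directed index set to push each exactness claim to a single index $k$ where the levelwise short exact sequence applies --- is exactly the standard proof given there. Nothing further is needed.
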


 We note

 \begin{lemma}
 \label{quotientDirSys}
 Let $(I, \preceq)$ be a directed set. Let $A:=\{A^i, \al^{ij}\}$ and $B:=\{B^i, \be^{ij}\}$ be direct systems of left $R$-modules over $(I, \preceq)$ such that there is a morphism of direct systems $g: A \to B$ with $ g^i: A^i \to B^i $ injective for all $i \in I$.
 Then
 \begin{enumerate}[label=\arabic*)]
  \item
  $ 0 \to A^i \stackrel{g^i}{\to} B^i \stackrel{h^i}{\to} B^i \slash g^i(A^i) \to 0$ is a short exact sequence for all $i \in I$ where $h^i: B^i \to B^i \slash g^i(A^i)$ is given by $h^i(b^i):= b^i + g^i(A^i)$.
  \item
  $B\slash g(A) := \{ B^i \slash g^i(A^i), \ga^{ij}\}$ with transition maps $\ga^{ij}: B^i \slash g^i(A^i) \to B^j \slash g^j(A^j)$ defined by $\ga^{ij}(b^i + g^i(A^i)):= \be^{ij}(b^i) + g^j(A^j)$ is a direct system over $(I, \preceq)$.
  \end{enumerate}
This gives rise to a morphism of direct systems $h: B \to B\slash g(A)$ and a short exact sequence of direct systems
$$
0 \to A \stackrel{g}{\to} B \stackrel{h}{\to} B\slash g(A) \to 0
$$
\end{lemma}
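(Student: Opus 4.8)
The plan is to establish the two numbered claims separately and then assemble them into the asserted short exact sequence of direct systems. All three parts are essentially diagram chases; the one place where the hypotheses must genuinely be used — and the only place I expect any friction — is the well-definedness of the transition maps $\ga^{ij}$ of the quotient system, which forces us to invoke that $g$ is a \emph{morphism} of direct systems rather than just a level-wise family of maps.

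First I would treat item 1). For fixed $i \in I$, the image $g^i(A^i)$ is a submodule of $B^i$ because $g^i$ is an $R$-module homomorphism, so $B^i \slash g^i(A^i)$ is again a left $R$-module and $h^i$ is the canonical projection onto it, which is surjective with kernel exactly $g^i(A^i)$. Combined with the injectivity of $g^i$ assumed in the hypothesis, this gives exactness of $0 \to A^i \stackrel{g^i}{\to} B^i \stackrel{h^i}{\to} B^i \slash g^i(A^i) \to 0$ at all three positions. This step is just the standard first-isomorphism-theorem bookkeeping.

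Next I would verify item 2). The key point is that $\ga^{ij}$ is well-defined: if $b^i + g^i(A^i) = \bti^i + g^i(A^i)$, write $b^i - \bti^i = g^i(a^i)$ for some $a^i \in A^i$; then the defining compatibility of a morphism of direct systems, $\be^{ij} \circ g^i = g^j \circ \al^{ij}$, yields $\be^{ij}(b^i) - \be^{ij}(\bti^i) = g^j(\al^{ij}(a^i)) \in g^j(A^j)$, so the two cosets agree. That $\ga^{ij}$ is $R$-linear is inherited from $\be^{ij}$ and the projections. The direct-system axioms for $B \slash g(A)$ then follow from those of $B$: $\ga^{ii}$ is the map induced by $\be^{ii} = \Id_{B^i}$, hence the identity on $B^i \slash g^i(A^i)$, and $\ga^{jk} \circ \ga^{ij}$ is induced by $\be^{jk} \circ \be^{ij} = \be^{ik}$, hence equals $\ga^{ik}$.

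Finally I would assemble the pieces. The family $h = (h^i)_{i \in I}$ is a morphism of direct systems $B \to B \slash g(A)$ because $\ga^{ij} \circ h^i = h^j \circ \be^{ij}$ is precisely the recipe by which $\ga^{ij}$ was defined, so the relevant squares commute; and $g$ is a morphism of direct systems by hypothesis. Since the sequence is exact at each index $i$ by item 1), this is exactly the statement that $0 \to A \stackrel{g}{\to} B \stackrel{h}{\to} B \slash g(A) \to 0$ is a short exact sequence of direct systems. I note in passing that directedness of $(I, \preceq)$ is not actually used in this lemma — a general poset would suffice — but it is the hypothesis under which the lemma becomes useful in combination with \refshortDirectLim.
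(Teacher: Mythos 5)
Your proof is correct and follows essentially the same route as the paper's: levelwise exactness of $0 \to A^i \to B^i \to B^i\slash g^i(A^i) \to 0$, verification of the direct-system axioms for the quotient system, and the commutation $\ga^{ij}\circ h^i = h^j\circ \be^{ij}$ making $h$ a morphism of direct systems. You are in fact slightly more thorough than the paper, which treats 1) as clear and omits the well-definedness check of $\ga^{ij}$ on cosets (the one point where $\be^{ij}\circ g^i = g^j\circ\al^{ij}$ is genuinely needed), and your closing observation that directedness of $(I,\preceq)$ is not used in this lemma itself is also accurate.
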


 \begin{proof}
{\bf 1)} is clear.

\noindent
{\bf 2)} We have $\ga^{ii}=\Id_{B^i \slash g^i(A^i)}$ for all $i \in I$ and we compute for all $i, j, k \in I$ with $i \preceq j \preceq k$:
\begin{align*}
\ga^{jk}(\ga^{ij}(b^i + g^i(A^i))) & = \ga^{jk}(\be^{ij}(b^i) + g^j(A^j)) = \be^{jk}(\be^{ij}(b^i)) + g^k(A^k) = \be^{ik}(b^i) + g^k(A^k) \\
& = \ga^{ik}(b^i + g^i(A^i)) .
\end{align*}
Moreover, we find for all $i, j \in I$ with $i \preceq j$
\begin{align*}
 h^j(\be^{ij}(b^i)) & = \be^{ij}(b^i) + g^j(A^j) = \ga^{ij}(b^i + g^i(A^i)) = \ga^{ij}(h^i(b^i)) .
\end{align*}
\end{proof}

This implies in particular

\begin{corollary}
\label{dirLimQuotient}
 Let $(I, \preceq)$ be a directed set. Let $A:=\{A^i, \al^{ij}\}$ and $B:=\{B^i, \be^{ij}\}$ direct systems of left $R$-modules over $(I, \preceq)$ such that there is a morphism of direct systems $g: A \to B$ such that $ g^i: A^i \to B^i $ is injective for all $i \in I$.
 Then, up to unique isomorphism, we have
 $$
 \varinjlim \left( B^i \slash g^i(A^i) \right) = \varinjlim B^i \slash \varinjlim g^i(A^i)
 $$
\end{corollary}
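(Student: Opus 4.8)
The plan is to feed Lemma \ref{quotientDirSys} into Proposition \ref{shortDirectLim} twice and then match up the resulting submodules. First, since each $g^i$ is injective, Lemma \ref{quotientDirSys} gives the short exact sequence of direct systems
\[
0 \to A \stackrel{g}{\to} B \stackrel{h}{\to} B\slash g(A) \to 0, \qquad B\slash g(A) = \{ B^i\slash g^i(A^i), \ga^{ij}\}.
\]
Because $(I,\preceq)$ is directed, Proposition \ref{shortDirectLim} applies and yields the short exact sequence of left $R$-modules
\[
0 \to \varinjlim A^i \to \varinjlim B^i \to \varinjlim\bigl(B^i\slash g^i(A^i)\bigr) \to 0,
\]
where the first map is the one induced by the morphism of direct systems $g$. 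By the first isomorphism theorem, $\varinjlim(B^i\slash g^i(A^i))$ is canonically isomorphic to $\varinjlim B^i$ modulo the image of $\varinjlim A^i$ in $\varinjlim B^i$.

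Second, I would identify that image with $\varinjlim g^i(A^i)$. Note first that $g(A):=\{g^i(A^i),\, \be^{ij}|_{g^i(A^i)}\}$ is itself a well-defined direct system of submodules of the $B^i$: since $g$ is a morphism of direct systems we have $\be^{ij}\circ g^i = g^j\circ \al^{ij}$, hence $\be^{ij}(g^i(A^i)) = g^j(\al^{ij}(A^i)) \subseteq g^j(A^j)$; and $\varinjlim g^i(A^i)$ is by definition the direct limit of this system. Since each $g^i$ is injective, $g$ is an isomorphism of direct systems $A \xrightarrow{\ \sim\ } g(A)$, so it induces an isomorphism $\varinjlim A^i \xrightarrow{\ \sim\ } \varinjlim g^i(A^i)$. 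Applying Lemma \ref{quotientDirSys} and Proposition \ref{shortDirectLim} once more to the short exact sequence $0 \to g(A) \hookrightarrow B \to B\slash g(A) \to 0$ shows that the induced map $\varinjlim g^i(A^i) \to \varinjlim B^i$ is injective; and since the composite $A \to g(A) \hookrightarrow B$ is the original $g$, the induced map on limits has image equal to the image of $\varinjlim A^i \to \varinjlim B^i$. Combining the two steps gives the desired isomorphism $\varinjlim(B^i\slash g^i(A^i)) \cong \varinjlim B^i \slash \varinjlim g^i(A^i)$. Uniqueness up to unique isomorphism holds because both sides receive compatible maps from every $B^i\slash g^i(A^i)$, so any two such isomorphisms must coincide by the universal property of $\varinjlim(B^i\slash g^i(A^i))$.

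Alternatively, once could argue directly from the concrete model in Proposition \ref{existenceDirectLim}(2): represent an element of $\varinjlim(B^i\slash g^i(A^i))$ by a class $\llbracket b^i + g^i(A^i)\rrbracket$, send it to $\llbracket b^i\rrbracket + \varinjlim g^i(A^i)$, and use directedness of $I$ to push any two competing representatives to a common index $k$ in order to verify well-definedness, injectivity, and surjectivity by hand. The only genuinely delicate point in either approach is pinning down $\varinjlim g^i(A^i)$ as a submodule of $\varinjlim B^i$ — i.e. the injectivity of $\varinjlim g^i(A^i)\to \varinjlim B^i$ — which is precisely where injectivity of the $g^i$ together with directedness of $(I,\preceq)$ enter, and which the argument above packages cleanly by invoking Proposition \ref{shortDirectLim} on $0 \to g(A) \to B \to B\slash g(A) \to 0$.
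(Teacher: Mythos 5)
Your proposal is correct and follows essentially the same route as the paper: feed \refquotientDirSys\ into \refshortDirectLim\ to obtain the short exact sequence $0 \to \varinjlim A^i \to \varinjlim B^i \to \varinjlim (B^i \slash g^i(A^i)) \to 0$ and then identify the quotient term with $\varinjlim B^i \slash \varinjlim g^i(A^i)$. The only difference is that you make explicit the step the paper leaves implicit --- that the image of $\varinjlim A^i$ in $\varinjlim B^i$ coincides with the submodule $\varinjlim g^i(A^i)$, via the isomorphism of direct systems $A \simeq g(A)$ and a second application of \refshortDirectLim\ --- which is a welcome sharpening rather than a divergence.
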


\begin{proof}
 Completing the sequence of direct systems $0 \to A \stackrel{g}{\to} B$ to the short exact sequence of direct systems $0 \to A \stackrel{g}{\to} B \stackrel{h}{\to} B\slash g(A) \to 0 $ as in \refquotientDirSys, \refshortDirectLim\ implies the existence of the short exact sequence
 $$
 0 \to \varinjlim A^i \stackrel{\underscriptrightarrow{g}}{\longrightarrow} \varinjlim B^i \stackrel{\underscriptrightarrow{h}}{\longrightarrow} \varinjlim (B^i\slash g^i(A^i)) \to 0
 $$
 On the other hand, completing the sequence $0 \to \varinjlim A^i \stackrel{\underscriptrightarrow{g}}{\longrightarrow} \varinjlim B^i $ to a short exact sequence yields
 $$
 0 \to \varinjlim A^i \stackrel{\underscriptrightarrow{g}}{\longrightarrow} \varinjlim B^i \longrightarrow \varinjlim B^i \slash \varinjlim g^i(A^i) \to 0
 $$
 Since the direct limit is unique up to unique isomorphism this yields the claim.
\end{proof}


\section{Homoclinic Floer homology via direct limit}
\label{sec:directLimFH}

In Section \ref{sec:localFH}, we defined local Floer homology generated by finite sets of homoclinic points. In this section, we will use this `local' notion in order to define a `more global' version by passing to the direct limit.
This significantly generalizes {\em (semi)primary homoclinic Floer homology} (cf.\ Hohloch \cite{hohloch1, hohloch2, hohloch3}) that is generated by the finite set of {\em (semi)primary} homoclinic orbits.


\subsection{Direct systems of $\Z$-modules generated by homoclinic points}

\begin{lemma}
Endow the set $\mcE$ with the inclusion $\subseteq$ as binary relation.
Then $(\mcE, \subseteq)$ is a directed set.
\end{lemma}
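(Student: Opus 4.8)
The plan is to verify the three poset axioms for $(\mcE, \subseteq)$ and then the directedness condition, the latter being the only point requiring a small argument.

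\medskip

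First I would observe that set-theoretic inclusion $\subseteq$ is reflexive, antisymmetric, and transitive on any collection of sets, so in particular on $\mcE$; this makes $(\mcE, \subseteq)$ a partially ordered set. The one thing that genuinely needs to be checked is that $\mcE$ is closed under the operation that witnesses directedness: given $D, E \in \mcE$, the natural candidate for a common upper bound is $D \cup E$, and we need $D \cup E \in \mcE$, i.e.\ that $D \cup E$ is again a \emph{finite} subset of $\mcH_{[x]}$. Since $D \subseteq \mcH_{[x]}$ and $E \subseteq \mcH_{[x]}$ we have $D \cup E \subseteq \mcH_{[x]}$, and since $\abs{D} < \infty$ and $\abs{E} < \infty$ we get $\abs{D \cup E} \leq \abs{D} + \abs{E} < \infty$. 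Hence $D \cup E \in \mcE$, and clearly $D \subseteq D \cup E$ and $E \subseteq D \cup E$, so $D \cup E$ is the desired common upper bound $k(D,E)$.

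\medskip

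There is no real obstacle here — the only thing to be careful about is confirming that the ambient constraint defining $\mcE$ (finiteness, and being a subset of $\mcH_{[x]}$) is preserved under finite unions, which it is. I would therefore write the proof as a direct verification: state reflexivity, antisymmetry, and transitivity of $\subseteq$ in one line each (or together), then produce $D \cup E$ as the witness for directedness together with the inequality $\abs{D\cup E}\le\abs D+\abs E<\infty$.
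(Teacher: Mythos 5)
Your proof is correct and follows essentially the same route as the paper: the poset axioms are immediate for $\subseteq$, and directedness is witnessed by $D \cup E$, which lies in $\mcE$ because a union of two finite subsets of $\mcH_{[x]}$ is again a finite subset of $\mcH_{[x]}$. The only difference is that you spell out the finiteness estimate $\abs{D \cup E} \leq \abs{D} + \abs{E} < \infty$, which the paper leaves implicit.
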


\begin{proof}
 $(\mcE, \subseteq)$ is clearly a partially ordered set. Given $E_1, E_2 \in \mcE$, we note that $E_1, E_2 \subseteq (E_1 \cup E_2) \in \mcE$ so that $(\mcE, \subseteq)$ is in fact a directed set.
\end{proof}

For $D, E \in (\mcE, \subseteq)$ with $D \subseteq E$, denote the inclusion induced on the $\Z$-modules by
$$\mcI^{ D E}: \Z D \to \Z E.$$

\begin{lemma}
The family
\begin{align*}
 \Z \mcE &:= \bigl( \{ \Z E \mid E \in \mcE\} , \{\mcI^{ D  E} \mid D, E \in \mcE,\ D \subseteq E \} \bigr)
\end{align*}
with index set $\mcE$ is a direct system of $\Z$-modules in the category of $\Z$-moduls. In particular, it is a direct systems of abelian groups.
%
\end{lemma}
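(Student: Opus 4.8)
The plan is to verify directly the two axioms in the definition of a direct system, the work being essentially bookkeeping about the inclusion maps $\mcI^{DE}$.

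First I would record that for each $E \in \mcE$ the module $\Z E$ is indeed an object of the category of $\Z$-modules (it is a free abelian group, hence a $\Z$-module), and that for $D \subseteq E$ the set-theoretic inclusion $D \hookrightarrow E$ extends uniquely to a $\Z$-linear map $\mcI^{DE} \colon \Z D \to \Z E$ sending a generator $p \in D$ to the same element $p$ viewed inside $\Z E$; this is a morphism in the category. Since $(\mcE, \subseteq)$ was just shown to be a directed (in particular partially ordered) set, the index set has the required structure.

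Next I would check axiom 1: for each $E \in \mcE$ we have $\mcI^{EE} = \Id_{\Z E}$, which is immediate since $\mcI^{EE}$ fixes every generator $p \in E$ and is $\Z$-linear, hence equals the identity on all of $\Z E$. Then axiom 2: for $D, E, F \in \mcE$ with $D \subseteq E \subseteq F$, both $\mcI^{EF} \circ \mcI^{DE}$ and $\mcI^{DF}$ are $\Z$-linear maps $\Z D \to \Z F$ that send each generator $p \in D$ to $p \in \Z F$; since a $\Z$-linear map is determined by its values on the generators of the free module $\Z D$, the two maps coincide. This establishes that $\Z \mcE$ is a direct system of $\Z$-modules indexed by $(\mcE, \subseteq)$; forgetting the $\Z$-action (equivalently, since every $\Z$-module is an abelian group and every $\Z$-linear map is a group homomorphism) it is in particular a direct system of abelian groups.

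There is no real obstacle here: the only thing to be slightly careful about is that $\mcI^{DE}$ is well defined as a map of modules and not merely of sets, but this is automatic because $\Z D$ is free on $D$, so any assignment on $D$ extends uniquely and linearly. One could additionally remark that the grading is respected, i.e.\ $\mcI^{DE}(\Z D_k) \subseteq \Z E_k$, as already noted in Section \ref{sec:localFH}, so that $\Z \mcE$ is in fact a direct system of graded $\Z$-modules, but this is not needed for the statement as phrased.
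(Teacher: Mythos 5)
Your proposal is correct and follows essentially the same route as the paper: one verifies $\mcI^{EE}=\Id_{\Z E}$ and $\mcI^{EF}\circ\mcI^{DE}=\mcI^{DF}$ for $D\subseteq E\subseteq F$, the only difference being that you spell out the (automatic) freeness argument for why the inclusions extend to $\Z$-linear maps. Nothing is missing.
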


\begin{proof}
 We have $\mcI^{E E}= \Id_{\Z E}$ for all $E \in \mcE$ and, for sets $D, E, F  \in \mcE$ with $D \subseteq E \subseteq F$ we find moreover $\mcI^{EF} \circ \mcI^{DE} = \mcI^{DF}$.
\end{proof}

As discussed in Section \ref{sec:problemsDelComplete}, not all sets in $\mcE$ admit a welldefined boundary operator, only $\del$-complete sets do. So it is of interest if $(\overline{\mcE}, \subseteq) \subset ({\mcE}, \subseteq)$ is also partially ordered and directed.

\begin{lemma}
 $(\overline{\mcE}, \subseteq)$
 is a partially ordered set.
\end{lemma}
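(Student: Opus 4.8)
The plan is to observe that $\overline{\mcE}$ is, by definition, a subset of $\mcE$, and that the restriction of a partial order to any subset is again a partial order. Since the previous lemma already established that $(\mcE, \subseteq)$ is a partially ordered set, I would simply verify the three defining properties for the relation $\subseteq$ induced on $\overline{\mcE}$: reflexivity, i.e.\ $E \subseteq E$ for all $E \in \overline{\mcE}$; antisymmetry, i.e.\ $D \subseteq E$ and $E \subseteq D$ imply $D = E$; and transitivity, i.e.\ $D \subseteq E$ and $E \subseteq F$ imply $D \subseteq F$. Each of these holds for arbitrary sets, hence in particular for elements of $\overline{\mcE}$, so no new verification beyond what was done for $\mcE$ is required.

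There is no genuine obstacle here: the relation $\subseteq$ restricted to $\overline{\mcE}$ is literally the same relation as on $\mcE$, so the poset axioms are inherited verbatim. (The genuinely delicate point, namely whether $(\overline{\mcE}, \subseteq)$ is \emph{directed} --- which would require $D \cup E \in \overline{\mcE}$ whenever $D, E \in \overline{\mcE}$, i.e.\ that $\del$-completeness is preserved under unions --- is a separate question and is not claimed in this statement.)
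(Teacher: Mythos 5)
Your proof is correct and is essentially the paper's own argument: the partial order $\subseteq$ on $\mcE$ restricts to the subset $\overline{\mcE}$, and the poset axioms are inherited verbatim. (Only a side remark: directedness would require some common upper bound $F \in \overline{\mcE}$ with $D, E \subseteq F$, not necessarily $F = D \cup E$; but as you say, that is not part of this statement.)
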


\begin{proof}
The claim is true since $(\mcE, \subseteq)$ induces a partial order on $\overline{\mcE} \subset \mcE$.
\end{proof}

We note

\begin{example}
 There exist subsets of $(\overline{\mcE}, \subseteq)$ that are directed.
\end{example}

\begin{proof}
 Using the notation from Figure \ref{sameCutPart}, a very easy (finite) example is given by $\bigl\{ \{p\}, \{ r\},  \{q_a, q_b\},\{p, q_a, q_b, r\} \bigr\} \subset \overline{\mcE}$.
\end{proof}

We strongly suspect that $(\overline{\mcE}, \subseteq)$ is also directed, i.e., that for every $D, E \in \overline{\mcE}$ there is $F \in \overline{\mcE}$ with $D$, $E \subseteq F$, but we have been so far unable to prove it. Note that if the procedure in Section \ref{sec:addCutPartners} (which adds cutting partners in order to achieve $\del$-completeness) could be made to work then taking $F$ to be the $\del$-completion of $D \cup E$ would do the trick. But there may be other ways to $\del$-compete a set.

\newpage

\begin{proposition}
\label{propModSys}
\mbox{ } 
 \begin{enumerate}[label=\arabic*)]
 \item
 The family
$
\Z \overline {\mcE}:= \bigl( \{ \Z E \mid E \in \overline{\mcE} \}, \{\mcI^{ D E} \mid D, E \in \overline{\mcE}, D \subseteq E \}  \bigr)
$
with index set $\overline{\mcE}$ is a direct system of $\Z$-moduls in the category of $\Z$-moduls. Therefore the direct limit of $\Z \overline {\mcE} $ with index set $\overline{\mcE}$ exists in the category of $\Z$-moduls. Note that, in particular, $\Z \overline {\mcE}$ is a direct system of abelian groups.
\item
Let $\tilde{\mcE} \subset \overline{\mcE}$ be directed. Then the family
$$
\Z \tilde{\mcE}:= \bigl( \{ \Z E \mid E \in \tilde{\mcE} \}, \{\mcI^{ D E} \mid D, E \in \tilde{\mcE}, D \subseteq E \}  \bigr)
$$
with index set $\tilde{\mcE}$ is a direct system of $\Z$-moduls in the category of $\Z$-moduls and, in particular, a direct system of abelian groups. Moreover, since $\tilde{\mcE}$ is directed, there is a more explicit expression for the direct limit of $\Z \tilde{\mcE}$ in the category of $\Z$-moduls and, in addition, compatibility with taking short exact sequences and quotients of direct limits.
\end{enumerate}
\end{proposition}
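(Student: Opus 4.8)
The plan is to derive both parts from results already in place; essentially no new work is required, since everything reduces to the remark that a direct system restricts to a direct system over any sub-poset, together with the general statements collected in Section \ref{sec:backgroundHomAlg}.

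\emph{Part 1.} First I would isolate the hereditary principle: if $\{C^i,\ga^{ij}\}$ is a direct system over a poset $(I,\preceq)$ and $J\subseteq I$ carries the induced order, then $\{C^i,\ga^{ij}\}_{i,j\in J}$ is again a direct system, because the defining conditions $\ga^{ii}=\Id_{C^i}$ and $\ga^{jk}\circ\ga^{ij}=\ga^{ik}$ are universally quantified identities that survive shrinking the index set. The preceding lemmas already give that $\Z\mcE$ is a direct system of $\Z$-modules over $(\mcE,\subseteq)$ and that $(\overline{\mcE},\subseteq)$ is a sub-poset of $(\mcE,\subseteq)$; since for $D,E\in\overline{\mcE}$ with $D\subseteq E$ the transition map $\mcI^{DE}$ is by definition the very same inclusion-induced map $\Z D\hookrightarrow\Z E$ used in $\Z\mcE$, the family $\Z\overline{\mcE}$ is exactly the restriction of $\Z\mcE$ to $\overline{\mcE}$, hence a direct system of $\Z$-modules. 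Existence of $\varinjlim\Z\overline{\mcE}$ in the category of $\Z$-modules then follows from the first part of \refexistenceDirectLim, applied with $R=\Z$. For the ``in particular'' I would simply cite the remark of Section \ref{sec:backgroundHomAlg} that abelian groups are precisely $\Z$-modules.

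\emph{Part 2.} For $\tilde{\mcE}\subseteq\overline{\mcE}\subseteq\mcE$ the same restriction argument shows that $\Z\tilde{\mcE}$ is a direct system of $\Z$-modules, equivalently of abelian groups, over $(\tilde{\mcE},\subseteq)$. Since $\tilde{\mcE}$ is assumed directed, I would then invoke the second part of \refexistenceDirectLim: this provides the explicit model of $\varinjlim\Z\tilde{\mcE}$, namely that every element is represented by a single class $\llbracket c^E\rrbracket$ with $c^E\in\Z E$, with addition $\llbracket c^D\rrbracket+\llbracket c^E\rrbracket=\llbracket\mcI^{DF}(c^D)+\mcI^{EF}(c^E)\rrbracket$ and the vanishing criterion that $\llbracket c^E\rrbracket=0$ iff $\mcI^{EF}(c^E)=0$ for some $F\supseteq E$ in $\tilde{\mcE}$. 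Directedness also unlocks \refshortDirectLim, so that a short exact sequence of direct systems over $\tilde{\mcE}$ passes to a short exact sequence of direct limits; combined with \refquotientDirSys\ and \refdirLimQuotient\ this is exactly the asserted compatibility of $\varinjlim$ with short exact sequences and with quotients.

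\emph{Where the care lies.} The hard part is a matter of emphasis rather than difficulty. One should stress that nothing has to be re-verified when passing to the smaller index sets $\overline{\mcE}$ or $\tilde{\mcE}$, since the transition maps are literally unchanged; and one should be careful to claim, for $\overline{\mcE}$ itself, only bare existence of the direct limit — the sharper statements (single-class representatives, exactness) are \emph{not} available there, precisely because $(\overline{\mcE},\subseteq)$ is not known to be directed (see the discussion following the earlier example), and they genuinely require the directedness hypothesis imposed in Part 2.
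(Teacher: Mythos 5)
Your proposal is correct and follows essentially the same route as the paper: the family inherits the direct-system axioms because the transition maps are the unchanged inclusion-induced maps over the sub-poset $(\overline{\mcE},\subseteq)$ (resp.\ $(\tilde{\mcE},\subseteq)$), existence of the limit comes from \refexistenceDirectLim, and for directed $\tilde{\mcE}$ the explicit description and the compatibility with short exact sequences and quotients come from \refexistenceDirectLim, \refshortDirectLim, \refquotientDirSys\ and \refdirLimQuotient. Your explicit care in claiming only bare existence for $\overline{\mcE}$ (not known to be directed) matches the paper's intent exactly.
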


\begin{proof}
Since $(\overline{\mcE}, \subseteq)$ is a partially ordered set, the family $\Z \overline{\mcE}$ indexed by $\overline{\mcE}$ is a directed system. The existence of the direct limit follows from \refexistenceDirectLim.
The corresponding claim concerning $\Z \tilde{\mcE}$ follows analogously.

Since the index set $\tilde{\mcE}$ is directed, \refexistenceDirectLim\ provides not only the existence of a direct limit, but also a more explicit form of it. \refshortDirectLim\ now provides the ability to pass to the direct limit of short exact sequences and \refdirLimQuotient\ the compatibility of taking a quotient and passing to the direct limit when working with short exact sequences.
\end{proof}


\subsection{Passing from $\Z$-modules to chain complexes}

Unfortunately, without additional assumptions, \refpropModSys\ only holds for (graded) $\Z$-modules, but does not extend to chain complexes in general, more precisely

\begin{lemma}
 \label{noChainMap}
 \mbox{ } 
 \begin{enumerate}[label=\arabic*)]
  \item 
  There are $D, E \in \overline{\mcE}$ with $D \subset E$ where $\mcI^{D E}: \Z D \to \Z E$ does not induce a chain map $\mcI^{D E}: (\Z D, \del^{D}) \to (\Z E, \del^{E})$, i.e, where we have
  $$\mcI^{D E} \circ \del^{D} \ \neq \ \del^{E} \circ  \mcI^{D E }.$$
  \item
   There exist $D, E \in \overline{\mcE}$ with $D \subset E$ such that there is no nontrivial linear map from $\Z D$ to $\Z E$ giving rise to a chain map from $(\Z D, \del^{D})$ to $(\Z E, \del^{E})$.
 \end{enumerate}
\end{lemma}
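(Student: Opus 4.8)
The plan is to settle both items at once by exhibiting one very small pair $D \subsetneq E$ in $\overline{\mcE}$ coming from a single di-gon. I would first fix two \emph{contractible} homoclinic points $p \neq q$ with $\mu(p,q)=1$, i.e.\ $\mu(p)=\mu(q)+1$, such that $\mcMhat(p,q)\neq\emptyset$; such a configuration exists under the running hypotheses (the primary homoclinic points all lie in $\mcH_{[x]}$ and, by Hohloch \cite{hohloch1}, some of them are joined by an immersed di-gon with $\mu=1$ — exactly the di-gons underlying the primary homoclinic Floer boundary operator — cf.\ the di-gon $v\in\mcMhat(p,q)$ in the gluing depicted in Figure \ref{cutGlue}(a)), so by \eqref{signs} we have $n(p,q)=\pm 1\neq 0$. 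I would then set $D:=\{p\}$ and $E:=\{p,q\}$ and record the routine checks that $D,E\in\overline{\mcE}$ and $D\subsetneq E$: indeed $\Z D=\Z p$ carries the zero differential because $D$ has no generator of Maslov degree $\mu(p)-1$, so $\del^D\circ\del^D=0$ trivially; and $\Z E=\Z p\oplus\Z q$ with $\del^E p=n(p,q)\,q$ and $\del^E q=0$ (no generator of degree $\mu(p)-2$ in $E$), whence $\del^E\circ\del^E=0$.

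For item 1 it then suffices to compute $\mcI^{DE}(\del^D p)=\mcI^{DE}(0)=0$ while $\del^E(\mcI^{DE}p)=\del^E p=n(p,q)\,q\neq 0$, so $\mcI^{DE}\circ\del^D\neq\del^E\circ\mcI^{DE}$. For item 2 I would show that any degree-preserving $\Z$-linear $f:\Z D\to\Z E$ with $f\circ\del^D=\del^E\circ f$ must be $0$: since $\Z D$ is concentrated in degree $\mu(p)$ and $E_{\mu(p)}=\{p\}$, necessarily $f(p)=a\,p$ for some $a\in\Z$; evaluating the chain-map identity on $p$ yields $0=f(\del^D p)=\del^E(a\,p)=a\,n(p,q)\,q$, hence $a\,n(p,q)=0$, and since $n(p,q)=\pm1$ this forces $a=0$, so $f\equiv 0$. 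The same argument goes through verbatim if one prefers a larger witness, e.g.\ $D=\{p\}$ the convex top vertex of a $\del$-complete heart $E=\{p,q_a,q_b,r\}$ as in Figure \ref{cutGlue}(b).

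The only genuinely delicate point — the one place where I would cite rather than compute — is the existence claim behind the choice of $p$ and $q$, namely that strong, transverse intersection really produces a contractible di-gon with $\mu(p,q)=1$; I would take this from \cite{hohloch1} (and Figure \ref{cutGlue}(a)) rather than reprove it. I would also append a brief remark that ``chain map'' in item 2 is understood to be degree-preserving: without that restriction nonzero chain maps always exist — for any nonempty $\del$-complete $D$ and any $\del$-complete $E\supsetneq D$ one may send a top-degree generator of $D$ to a nonzero element of $\ker\del^E$ (which is never $0$, since $\Img\del^E\subseteq\ker\del^E$) and kill every other generator — so the grading is an essential part of the statement.
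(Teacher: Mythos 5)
Your proof is correct, but it is not the paper's argument. The paper proves Lemma \ref{noChainMap} with the configuration of Figure \ref{figNoChainMap}: it takes $D=\{p,s\}$ with $\del^{D}=0$ and $E=\{p,q,r,s\}$ with $\del^{E}p=q$, $\del^{E}r=s-q$, verifies item 1 by the computation $\mcI^{DE}(\del^{D}p)=0\neq q=\del^{E}(\mcI^{DE}p)$, and verifies item 2 by testing grading-preserving maps $g$ with $g(p)=\al p+\be r$, $(\al,\be)\neq(0,0)$. You instead use the minimal witness $D=\{p\}\subset E=\{p,q\}$ coming from a single di-gon with $n(p,q)=\pm 1$: item 1 follows by the same one-line computation, and item 2 follows because $\Z D$ is concentrated in the single degree $\mu(p)$ while $E_{\mu(p)}=\{p\}$, so any chain map sends $p\mapsto a\,p$ and the chain identity forces $a\,n(p,q)=0$, hence $a=0$. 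What your route buys: the existence input is weaker (one contractible di-gon of Maslov difference $1$, which the paper itself exhibits, e.g.\ in Figure \ref{cutGlue} and Example \ref{exampleNonComplete}, and which underlies the primary boundary operator of \cite{hohloch1}), so citing rather than reproving it is at the same level of rigor as the paper's reliance on its figures; moreover your item-2 argument rules out \emph{all} nonzero degree-preserving maps, whereas in the paper's own example the map $p\mapsto 0$, $s\mapsto s$ does commute with the differentials, so the paper's hypothesis $(\al,\be)\neq(0,0)$ tacitly excludes nontrivial maps supported in the lower degree --- your two-point example has no such maps to exclude. Your closing remark that ``chain map'' is to be read as degree-preserving agrees with the paper's usage (its proof likewise restricts to grading-preserving maps), and your alternative witness $D=\{p\}\subset E=\{p,q_a,q_b,r\}$ from Figure \ref{cutGlue}(b) works by the identical argument.
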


Examples for \refnoChainMap\ are sketched in Figure \ref{figNoChainMap}.

\begin{figure}[h]
\begin{center}

\input{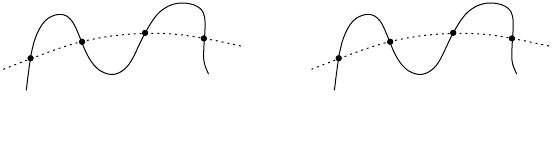_t}

\caption{The stable manifold is drawn with a continuous line and the unstable manifold with a dotted line and the latter is oriented `from left to right'. Subfigure (a) displays $D:=\{p, s\}$ with $\mu(p) = \mu(q)+1$. The set $D$ is $\del$-complete due to $\del^{D} p = 0$ and $\del^{D} s=0$. Subfigure (b) shows the set $E:=\{p, q, r, s\}$ with $\mu(p) = \mu(r)=\mu(q) + 1 =\mu(s)+ 1$. This set is $\del$-complete due to $\del^{E} p = q$ and $\del^{E} r = s -q$ and $\del^{E} q = 0$ and $\del^{E} s =0$.}
\label{figNoChainMap}

\end{center}
\end{figure}

\begin{proof}[Proof of \refnoChainMap:]
 Examples for both statements are sketched in Figure \ref{figNoChainMap}: Consider the sets $D:=\{p,s\}$ in Subfigure \ref{figNoChainMap} (a) and $E:=\{p,q,r,s\}$ in Subfigure \ref{figNoChainMap} (b) and consider their chain complexes $(\Z D, \del^{D})$ and $(\Z E, \del^{E})$. We note $\mu(p) = \mu(r) =\mu(q) +1 =\mu(s) +1 $ and abbreviate $\mu(p)=\mu(r)=:k$.
 
 {\bf 1)}
 Now have a look at the following (noncommutative!) diagram of chain complexes where the vertical arrows are induced by the inclusion $\mcI^{D E}: \Z D \to\Z E$, i.e., are given by the family $(\mcI^{D E}_k)_{k \in \Z}$ with $ \mcI^{D E}_k  : \Z D_k \to \Z E_k$:
 $$
 \begin{array}{cclclclclcc}
  \dots & \stackrel{\del_{k+2}^{D}}{\longrightarrow} & 0 & \stackrel{\del_{k+1}^{D}}{\longrightarrow} & \Z\{p\} & \stackrel{\del_{k}^{D}}{\longrightarrow} & \Z \{s\} & \stackrel{\del_{k-1}^{D}}{\longrightarrow} & 0 &   \stackrel{\del_{k-2}^{D}}{\longrightarrow} & \dots  \\
  &&&&&&&&&& \\
  && \downarrow \mcI^{D E}_{k+1} & &  \downarrow \mcI^{D E}_{k}&& \downarrow \mcI^{D E}_{k-1}&& \downarrow \mcI^{D E}_{k-2}&&  \\
  &&&&&&&&&& \\
  \dots & \stackrel{\del_{k+2}^{E}}{\longrightarrow} & 0 & \stackrel{\del_{k+1}^{E}}{\longrightarrow} & \Z\{p, r\} & \stackrel{\del_{k}^{E}}{\longrightarrow} & \Z \{q, s\} & \stackrel{\del_{k-1}^{E}}{\longrightarrow} & 0 &   \stackrel{\del_{k-2}^{E}}{\longrightarrow} & \dots
 \end{array}
 $$
 It is noncommutative since for example
 $$
 \mcI^{D E}_{k-1}(\del_{k}^{D} (p)) = \mcI^{D E}(0)=0 \neq q = \del_{k}^{E} (p) = \del_{k}^{E}( \mcI^{D E}_k(p))
 $$
 i.e., $\mcI^{D E}$ is no chain map.
 
 {\bf 2)}
 Consider the grading preserving linear map $g=(g_k)_{k \in \Z}: \Z D \to \Z E$ given on the generators by $g_k(p) := \al p+ \beta r$ with $\al, \be \in \R$ and $(\al, \be) \neq (0,0)$ and $g_{k-1}(s) \in \Z E$ arbitrary and $g_\ell =0$ for all $\ell \notin \{k, k+1\}$. Again, the induced diagram of chain complexes will not commute since
 $$
 g_{k-1}(\del_{k}^{D} (p))=g_{k-1}(0)=0 \neq \al q + \be (s-q) =  \del_{k}^{E} (p+r) =\del_{k}^{E} (g_k(p)).
 $$
 Thus there is in fact no grading preserving nontrivial linear map from $\Z D$ to $\Z E$ that gives rise to a chain map from $(\Z D, \del^D)$ to $(\Z E, \del^E)$. 
\end{proof}

Thus, if we want to work with chain complexes and not just $\Z$-modules, we have to use direct systems where the transition maps are chain maps.

\begin{definition}
 Let $D, E  \in \overline{\mcE}$ with $D \subseteq E$. The pair $(D, E)$ is said to be {\em chain compatible} if $\mcI^{D E}_{k-1} \circ \del_k^{D} = \del_k^{E} \circ \mcI_k^{D E}$ for all $k \in \Z$, i.e., if the inclusion $\mcI^{D E}$ is a chain map.
\end{definition}

The set of chain compatible pairs is not empty. A very simple example is

\begin{example}
Let $D, E \in \overline{\mcE}$ with $D \subseteq E$ and $\del^{D}=0 = \del^{E}$. Then the pair $(D, E)$ is chain compatible. An explicit example for such a situation is any pair $(D, E)$ with $D \subseteq E$ and $D =D_k$ and $E = E_k$ for a given $k \in \Z$.
\end{example}

To be able to formulate a criterion for chain compatibility let us fix the following notation.
Consider $D, E \in \overline{\mcE}$ with $D \subseteq E$. Then we get for $p \in E$
 \begin{align*}
 \del^E p & = \sum_{\stackrel{q \in E}{\mu(q)=\mu(p)-1}} n(p,q) \ q  =  \sum_{\stackrel{q \in D}{\mu(q)=\mu(p)-1}} n(p,q) \ q + \sum_{\stackrel{q \in E\setminus D}{\mu(q)=\mu(p)-1}} n(p,q) \ q
  =: \del^{E_{D}} (p)+ \del^{E_{E \setminus D}}(p),
\end{align*}
i.e., the boundary operator splits into $\del^E = \del^{E_{D}} + \del^{E_{E \setminus D}}$.

\begin{lemma}[Criterion]
\label{criterionChainCompat}
 Let $D, E \in \overline{\mcE}$ with $D \subseteq E$. Then 
 $$ \mbox{The pair } (D, E) \mbox{ is chain compatible }
 \ \ \Leftrightarrow \ \
 \del^{E_{E \setminus D}}\circ \mcI^{D E} =  0, \mbox{ i.e., }\del^{E_{E \setminus D}} \mbox{ vanishes on } D \subseteq E.
 $$
\end{lemma}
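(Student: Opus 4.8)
The plan is to reduce the whole statement to an identity checked on the generators $p \in D$, and to exploit the fact recalled in Section \ref{secSigns} that the sign $n(p,q)$ depends only on $p$, $q$ and the fixed orientation $o_u$ of $W^u$ (it is read off from the segment $[p,q]_u$), and in particular \emph{not} on the ambient finite set in which $p$ and $q$ are viewed. Since $\mcI^{DE}$, $\del^D$, $\del^E$ (and also $\del^{E_D}$, $\del^{E_{E\setminus D}}$) are all $\Z$-linear and degreewise, it suffices to compare $\mcI^{DE}(\del^D p)$ with $\del^E(\mcI^{DE}p)$ for an arbitrary generator $p \in D$, say $p \in D_k$.

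First I would apply the splitting $\del^E = \del^{E_D} + \del^{E_{E\setminus D}}$ introduced just above the lemma, so that $\del^E(\mcI^{DE}p) = \del^{E_D}(p) + \del^{E_{E\setminus D}}(p)$. The key step is to identify $\del^{E_D}(p)$ with $\mcI^{DE}(\del^D p)$: the index set of the sum defining $\del^{E_D}(p)$ is exactly $D_{k-1}$, which is the same set indexing $\del^D p$ in the definition \eqref{defDel}, and the coefficients $n(p,q)$ agree in both sums by the ambient-independence of signs. Hence for every generator $p\in D$,
$$
\del^E(\mcI^{DE}p) - \mcI^{DE}(\del^D p) = \del^{E_{E\setminus D}}(p) = \del^{E_{E\setminus D}}(\mcI^{DE}p),
$$
and by linearity this equality of maps holds on all of $\Z D$. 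Therefore $\mcI^{DE}\circ\del^D = \del^E\circ\mcI^{DE}$ precisely when $\del^{E_{E\setminus D}}\circ\mcI^{DE}=0$; reading this on generators gives exactly the statement that $\del^{E_{E\setminus D}}$ vanishes on $D\subseteq E$. Both implications then follow simultaneously.

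I do not anticipate a genuine obstacle: the argument is essentially a one-line bookkeeping computation once the ambient-independence of $n(p,q)$ is invoked. The only points deserving a sentence of care are that $\del^{E_{E\setminus D}}$ is merely a $\Z$-linear map $\Z E\to\Z E$ landing in $\Z(E\setminus D)$ — it is \emph{not} claimed to be a boundary operator, $E\setminus D$ need not be $\del$-complete or a subcomplex, and no square-zero property is used — and that the degree shift is handled degreewise, i.e. one is comparing the family $\mcI^{DE}_{k-1}\circ\del^D_k$ with $\del^E_k\circ\mcI^{DE}_k$ for all $k\in\Z$, which is automatic from the above since each map respects the Maslov grading.
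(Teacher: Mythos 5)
Your proposal is correct and follows essentially the same route as the paper: evaluate both $\mcI^{DE}(\del^D p)$ and $\del^E(\mcI^{DE}p)$ on a generator $p \in D$, use the splitting $\del^E = \del^{E_D} + \del^{E_{E \setminus D}}$ to see that the $D$-part coincides with $\mcI^{DE}(\del^D p)$ (the signs $n(p,q)$ being independent of the ambient set), and conclude that the failure of the chain map property is exactly $\del^{E_{E\setminus D}}$ restricted to $D$. Your added remarks on linearity, degreewise comparison, and the fact that $\del^{E_{E\setminus D}}$ need not be a boundary operator are harmless clarifications not spelled out in the paper.
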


\begin{proof}
We have to check when $\mcI^{D E}$ is a chain map, i.e., when we have
$\mcI^{D E}_{k-1} \circ \del_k^{D} = \del_k^{E} \circ \mcI_k^{D E}$ for all $k \in \Z$.
We compute for $p \in D \subseteq E$
 \begin{align*}
 \mcI^{D E}(\del^{D}(p)) & = \mcI^{D E} \left(\sum_{\stackrel{q \in D}{\mu(q)=\mu(p)-1}} n(p,q) \ q \right) =\sum_{\stackrel{q \in D}{\mu(q)=\mu(p)-1}} n(p,q) \ q 
 \end{align*}
 and compare it with
 \begin{align*}
 \del^{E}(\mcI^{D E}(p)) 
 & =  \del^{E_D} (\mcI^{D E}(p)) + \del^{E_{E \setminus D}} (\mcI^{D E}(p))
 = \sum_{\stackrel{q \in D}{\mu(q)=\mu(p)-1}} n(p,q) \ q + \sum_{\stackrel{q \in E \setminus D}{\mu(q)=\mu(p)-1}} n(p,q) \ q 
\end{align*}
which yields the claim.
\end{proof}

Now we extend the notion of chain compatibility to direct systems.

\begin{definition}
 $\tilde{\mcE} \subset (\overline{\mcE}, \subseteq)$ is said to be {\em chain compatible} if, for all $D, E \in \tilde{\mcE}$ with $D \subseteq E$, the pair $(D, E)$ is chain compatible.
\end{definition}

The set of chain compatible direct systems is not empty. A very simple example is

\begin{example}
Let ${\mcE'} \subset (\overline{\mcE}, \subseteq)$ be a subset satisfying $\del^{E}=0$ for all $E \in {\mcE'}$. Let $D, E \in {\mcE'} $ with $D \subseteq E$. Since $0= \del^E = \del^{E_{D}} + \del^{E_{E \setminus D}}$ we get $\del^{E_{E \setminus D}} =0$ so that ${\mcE'}$ is chain compatible.
Note that having $\del^E=0$ for all $E \in {\mcE'}$ is, for example, the case if $E=E_k$ for all $E \in {\mcE'}$ and a given $k \in \Z$.
\end{example}

\subsection{Passing from chain complexes to direct limits of homoclinic Floer homology}

The first aim of this subsection is to obtain direct systems of (local) homoclinic Floer homologies of which later the direct limit can be taken.

\begin{proposition}
\label{PropHomSys}
Let $D, E \in \overline{\mcE}$ with $D \subseteq E$ and $(D, E)$ chain compatible.
Then the chain map $\mcI^{DE}: \Z D \to \Z E$ gives rise to a map in homology $H(\mcI^{DE}): H(D) \to H(E)$.
Moreover, for all $D, E, F \in \overline{\mcE}$ with $D \subseteq E \subseteq F$ and $(D, E)$, $(E, F)$, $(D, F)$ chain compatible, we have
$$H(\mcI^{EF}) \circ H(\mcI^{DE}) = H(\mcI^{DF}).$$
\end{proposition}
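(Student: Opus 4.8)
The plan is to deduce everything from the elementary functoriality of the passage from chain complexes to homology, using that the hypothesis of chain compatibility is precisely what upgrades the set-theoretic inclusion $\mcI^{DE}$ to a chain map, and that inclusions compose strictly on the level of chain groups.

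First I would record the standard fact that a chain map descends to homology. Let $f=(f_k)_{k\in\Z}\colon (C_\bullet,\del)\to(C'_\bullet,\del')$ be a grading-preserving linear map with $f_{k-1}\circ\del_k=\del'_k\circ f_k$ for all $k$. If $p\in\ker\del_k$ then $\del'_k(f_k(p))=f_{k-1}(\del_k(p))=0$, so $f_k$ maps $\ker\del_k$ into $\ker\del'_k$; and if $p=\del_{k+1}(q)$ then $f_k(p)=f_k(\del_{k+1}(q))=\del'_{k+1}(f_{k+1}(q))\in\Img\del'_{k+1}$, so $f_k$ maps $\Img\del_{k+1}$ into $\Img\del'_{k+1}$. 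Hence $H_k(f)\colon H_k(C)\to H_k(C')$, $[p]\mapsto[f_k(p)]$, is well defined (independent of the choice of representative $p$) and $\Z$-linear. Applying this to $f=\mcI^{DE}$, which is a chain map $(\Z D,\del^{D})\to(\Z E,\del^{E})$ precisely because $(D,E)$ is chain compatible, produces the desired $H(\mcI^{DE})\colon H(D)\to H(E)$; likewise $H(\mcI^{EF})$ and $H(\mcI^{DF})$ exist because $(E,F)$ and $(D,F)$ are chain compatible.

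For the composition identity I would first observe that on the chain level the three inclusions already satisfy $\mcI^{EF}\circ\mcI^{DE}=\mcI^{DF}$: this is just the statement that the inclusion of generating sets $D\hookrightarrow E\hookrightarrow F$ composes to $D\hookrightarrow F$, a fact already recorded in the proof that $\Z\mcE$ is a direct system. Then, since the assignment $f\mapsto H_k(f)$ is functorial --- for composable chain maps $g\circ f$ one has $[(g\circ f)_k(p)]=[g_k(f_k(p))]$, hence $H_k(g\circ f)=H_k(g)\circ H_k(f)$ --- we obtain in each degree
\[
H_k(\mcI^{EF})\circ H_k(\mcI^{DE})=H_k(\mcI^{EF}\circ\mcI^{DE})=H_k(\mcI^{DF}),
\]
and summing over $k\in\Z$ gives $H(\mcI^{EF})\circ H(\mcI^{DE})=H(\mcI^{DF})$.

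There is essentially no obstacle here: the mathematical content of the proposition is entirely the observation that chain compatibility turns the purely set-theoretic inclusions into chain maps, after which the conclusion is formal. The only points that require a (routine) check are the well-definedness of $H(\mcI^{DE})$ on homology classes and the strict equality $\mcI^{EF}\circ\mcI^{DE}=\mcI^{DF}$ of chain maps (not merely an equality up to chain homotopy), both of which are immediate from the constructions in Section~\ref{sec:localFH}.
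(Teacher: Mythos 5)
Your proof is correct, but it takes a genuinely more elementary route than the paper. You use only the basic functoriality of homology: chain compatibility upgrades $\mcI^{DE}$ to a chain map, a chain map preserves kernels and images and hence descends to $H(\mcI^{DE})$, and since the inclusions compose strictly on the chain level ($\mcI^{EF}\circ\mcI^{DE}=\mcI^{DF}$, as already recorded for the direct system $\Z\mcE$), the identity $H(\mcI^{EF})\circ H(\mcI^{DE})=H(\mcI^{DF})$ follows from $H_k(g\circ f)=H_k(g)\circ H_k(f)$. The paper instead embeds each inclusion into a short exact sequence of chain complexes $0\to (C(D),\del^D)\hookrightarrow (C(E),\del^E)\twoheadrightarrow (C(E,D),\del^{(E,D)})\to 0$, obtains $H_k(\mcI^{DE})$ from the resulting long exact homology sequence, and proves the composition identity by viewing the triple $D\subseteq E\subseteq F$ as a morphism of short exact sequences and invoking the functoriality of the long-exact-sequence construction (Weibel, Theorem 1.3.4). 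Your argument buys brevity and self-containedness --- it needs neither the quotient complexes nor any citation --- while the paper's heavier machinery additionally produces the relative groups $H_k(E,D)$ and the long exact sequence relating $H(D)$, $H(E)$, $H(E,D)$, which is extra structure not required for the statement itself. Both of the routine checks you flag (well-definedness on classes and strict, not merely up-to-homotopy, composition of the inclusions) are indeed immediate, so there is no gap.
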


\begin{proof}
Let $D, E \in \overline{\mcE}$ with $D \subseteq E$ and $(D, E)$ chain compatible. Due to the chain compatibility of $(D, E)$, the map $\mcI^{DE}: \Z D \to \Z E$ is in fact a chain map, i.e., equipping the $\Z$-modules with differentials $(\Z D, \del^D)=:(C(D), \del^D)$ and $(\Z E, \del^E)=:(C(E), \del^E)$ the map $\mcI^{DE}$ satisfies $\del^E_k \circ \mcI^{DE}_k =   \mcI^{DE}_{k-1} \circ \del^D_k$ for all $k \in \Z$ and is therefore also a chain complex map $\mcI^{D, E}= C(\mcI^{D, E}): (C(D), \del^{D}) \to (C(E), \del^{E})$. Moreover, being an inclusion, $C(\mcI^{D, E}): (C(D), \del^D) \ \hookrightarrow \ (C( E), \del^E)$ extends to a short exact sequence of chain complexes
$$
0 \to (C(D), \del^D) \ \stackrel{\ \ C(\mcI^{D, E})}{\hookrightarrow} \ (C( E), \del^E) \ \twoheadrightarrow \ ( C(E, D), \del^{(E,D)}) \ \to 0
$$
This gives rise to a long exact sequence of homology groups
 $$
  \cdots \to H_{k+1}(E, D) \to H_k(D) \stackrel{H_k(\mcI^{DE})}{\to} H_k(E) \to H_k(E, D) \to H_{k-1}(D) \stackrel{H_{k-1}(\mcI^{DE})}{\to} \cdots
 $$
with maps $H_k(\mcI^{DE}): H_k(D) \to H_k(E)$ for all $k \in \Z$, i.e., we obtain a map
$$H(\mcI^{DE}): H(D) \to H(E).$$
Now consider $D, E, F \in \overline{\mcE}$ with $D \subseteq E \subseteq F$ and $(D, E)$, $(E, F)$, $(D, F)$ being chain compatible.
Recall that passing from the category of short exact sequences of chain complexes in an abelian category by taking the homology to the category of long exact homology sequences is a functor. This means that a morphism between chain complexes is mapped, when passing to the homology sequence, to a morphism between long exact sequences (cf.\ Weibel \cite[Theorem 1.3.4]{weibel}).
Thus the following commutative diagram of chain complexes
 $$
 \begin{array}{ccccccccccc}
  0 & \to & (C( D), \del^D) &  \stackrel{C(\mcI^{DE})}{\hookrightarrow} & (C(E), \del^E) & \twoheadrightarrow & (C(E, D), \del^{E,D}) & \to & 0 \\
  &&&&&&&& \\
  \parallel & & \parallel & & \downarrow {C( \mcI^{EF})} & & \downarrow  && \parallel \\
  &&&&&&&& \\
  0 & \to & (C( D), \del^D) &  \stackrel{C(\mcI^{DF})}{\hookrightarrow} & (C( F), \del^F) & \twoheadrightarrow & (C(F, D), \del^{F,D}) & \to & 0
 \end{array}
$$
can be seen as morphism between two short exact sequences of chain complexes. It gives rise to a morphism between the long exact sequences, namely the following commuting diagram of long exact sequences
$$
\begin{array}{ccccccccccc}
 \cdots &  \longrightarrow &  H_{k+1}(D, C) & \longrightarrow & H_k(C) & \stackrel{H_k(\mcI^{CD})} {\longrightarrow} &  H_k(D) &  \longrightarrow &  H_k(D, C) &  \longrightarrow &  \cdots \\
 &&&&&&&&&& \\
 && \downarrow && \downarrow \Id && \downarrow H_k(\mcI^{DE}) && \downarrow && \\
 &&&&&&&&&& \\
 \cdots & \longrightarrow &  H_{k+1}(C, E) & \longrightarrow & H_k(C) & \stackrel{H_k(\mcI^{CE})}{\longrightarrow} & H_k(E) & \longrightarrow & H_k(E, C) & \longrightarrow & \cdots \\
 &&&&&&&&&& \\
\end{array}
$$
This shows $H(\mcI^{EF}) \circ H(\mcI^{DE}) = H(\mcI^{DF})$.
Moreover, we also have $\Id_{H_k(D)} = H_k(\mcI^{DD})$.
\end{proof}

Since the local homoclinic Floer homology assigned to a $\del$-complete, finite set $E$ only can observe a very small part of the information contained in the full set of homoclinic points $\mcH$ itself, it is a natural idea to make the set $E$ `bigger' in order to gather more of the information stored in $\mcH$. So opting for taking a direct limit over a suitable directed index set is a logical choice. It also would be compatible with the attempt of `adding cutting points to make $\del$-incomplete sets $\del$-complete' discussed in Section \ref{sec:problemsDelComplete}.

\begin{theorem}
\label{thDirectLim}
Let $\tilde{\mcE} \subset (\overline{\mcE}, \subseteq)$ be chain compatible. Then, for all $k \in \Z$,
 \begin{align*}
 H^{\ti{\mcE}}_k& := \bigl( \{ H_k(D) \mid D   \in \tilde{\mcE}\} , \{ H(\mcI^{DE}_k) \mid D, E \in \tilde{\mcE}, \ D \subseteq E \} \bigr), \\
 H^{\ti{\mcE}}& := \bigl( \{ H(D) \mid D   \in \tilde{\mcE}\} , \{ H(\mcI^{DE}) \mid D, E \in \tilde{\mcE}, \ D \subseteq E \} \bigr)
\end{align*}
 with (partially ordered) index set $\tilde{\mcE}$ are direct systems of graded $\Z$-modules in the category of $\Z$-moduls. In particular, both are direct systems of abelian groups and therefore admit direct limits.
 If the chain compatible set $\tilde{\mcE}\subset (\overline{\mcE}, \subseteq)$ is not only partially ordered but in fact directed then both direct systems allow passing to the direct limit of short exact sequences and we can assign a welldefined homology via direct limit to $\ti{\mcE}$:
$$
H_k(\ti{\mcE}):= \varinjlim H_k^{\ti{\mcE}} \quad \forall\ k \in \Z \qquad \mbox{and} \qquad H(\ti{\mcE}):= \bigoplus_{k \in \Z} H_k(\ti{\mcE}).
$$
\end{theorem}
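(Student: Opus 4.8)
The plan is to deduce everything formally from \refPropHomSys\ together with the homological algebra of Section \ref{sec:backgroundHomAlg}; no new geometric input about the homoclinic tangle is needed here.

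First I would verify that $H^{\ti{\mcE}}_k$ and $H^{\ti{\mcE}}$ satisfy the two axioms of a direct system over the poset $(\ti{\mcE}, \subseteq)$. Since $\ti{\mcE}$ is chain compatible, every comparable pair $D \subseteq E$ in $\ti{\mcE}$ is chain compatible, so $\mcI^{DE}$ is a chain map and hence, by \refPropHomSys, induces grading preserving maps $H(\mcI^{DE})\colon H(D) \to H(E)$ and $H_k(\mcI^{DE})\colon H_k(D) \to H_k(E)$ for all $k \in \Z$ (grading preserving because $\mcI^{DE}$ is). The identity axiom holds because $\mcI^{DD} = \Id_{\Z D}$ induces the identity on homology in each degree. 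For the composition axiom, given $D \subseteq E \subseteq F$ in $\ti{\mcE}$ the three pairs $(D,E)$, $(E,F)$, $(D,F)$ are all chain compatible -- again because $\ti{\mcE}$ is chain compatible -- so the second part of \refPropHomSys\ gives $H(\mcI^{EF}) \circ H(\mcI^{DE}) = H(\mcI^{DF})$, and likewise in each degree $k$. This shows both families are direct systems of graded $\Z$-modules, a fortiori of abelian groups, so their direct limits exist by \refexistenceDirectLim.

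Next, assuming $(\ti{\mcE}, \subseteq)$ is directed, I would invoke \refshortDirectLim\ and \refdirLimQuotient, which say that over a directed index set the direct limit functor is exact and commutes with forming quotients. The chain complexes $(C(D), \del^D)$ for $D \in \ti{\mcE}$ together with the chain maps $\mcI^{DE}$ form a direct system of chain complexes over $\ti{\mcE}$; since chain maps carry cycles to cycles and boundaries to boundaries, for each $k$ the assignments $D \mapsto \ker\del^D_k$ and $D \mapsto \Img\del^D_{k+1}$ are direct subsystems of $D \mapsto C_k(D) = \Z D_k$. Applying \refshortDirectLim\ and \refdirLimQuotient\ to the resulting short exact sequences then identifies $\varinjlim H_k^{\ti{\mcE}}$ with the $k$-th homology of the direct limit chain complex $\varinjlim_{D \in \ti{\mcE}}(C(D), \del^D)$, and, applied to the short exact and long exact sequences appearing in the proof of \refPropHomSys, shows that passing to the direct limit preserves their exactness -- this is the promised compatibility with short exact sequences. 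Hence
$$
H_k(\ti{\mcE}) := \varinjlim H_k^{\ti{\mcE}} \qquad \mbox{and} \qquad H(\ti{\mcE}) := \bigoplus_{k \in \Z} H_k(\ti{\mcE})
$$
are well defined, and unique up to unique isomorphism by \refexistenceDirectLim.

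As for the main difficulty: once \refPropHomSys\ is available the argument is essentially bookkeeping, so there is no serious obstacle; the points requiring care are (i) checking that chain compatibility of the whole set $\ti{\mcE}$ supplies chain compatibility of every comparable triple, which is exactly what the functoriality statement of \refPropHomSys\ needs, and (ii) verifying that $\ker\del^D_k$ and $\Img\del^D_{k+1}$ transform correctly under the $\mcI^{DE}$, so that the exactness of the direct limit over the directed set $\ti{\mcE}$ may legitimately be used to commute ``$\varinjlim$'' past ``homology''.
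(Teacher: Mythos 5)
Your proposal is correct and follows essentially the same route as the paper's proof: chain compatibility plus \refPropHomSys\ to obtain the direct systems (with identity and composition axioms), \refexistenceDirectLim\ for the existence of the limits, and, in the directed case, the induced direct systems of kernels and images together with \refshortDirectLim\ and \refdirLimQuotient\ to pass short exact sequences to the limit and make $H_k(\ti{\mcE})= \varinjlim H_k^{\ti{\mcE}}$ welldefined. Your extra identification of $\varinjlim H_k^{\ti{\mcE}}$ with the homology of the limit chain complex is a harmless strengthening of the same argument.
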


\begin{proof}
Let $\ti{\mcE} \subset (\overline{\mcE}, \subseteq)$ be chain compatible and consider $D, E \in \ti{\mcE}$ with $D \subseteq E$.
Then, by definition, the map $\mcI^{DE}: (\Z D, \del^D) \to (\Z E, \del^E)$ is a chain map, i.e., $\del^E_k \circ \mcI^{DE}_k =   \mcI^{DE}_{k-1} \circ \del^D_k$ for all $k \in \Z$.
This implies in particular that, for all $k \in \Z$, its restriction to the images $\Img \del^D_k$ and kernels $\ker \del^D_k$ satisfies
\begin{equation*}
  \mcI_{k-1}^{DE}|_{\Img \del^D_k}  : \Img \del^D_k \to \Img \del^E_k
 \qquad \mbox{and} \qquad
 \mcI_k^{DE}|_{\ker \del^D_k}  : \ker \del^D_k \to \ker \del^E_k.
\end{equation*}
Therefore
\begin{align*}
 &\ker_k^{\ti{\mcE}}:= (\ker_k, \mcI_k)  := \bigl( \{ \ker \del_k^E \mid E \in \ti{\mcE} \}, \{\mcI^{DE}_k \mid D, E \in \ti{\mcE}, D \subseteq E \} \bigr),  \\
 &\Img_k^{\ti{\mcE}}:= (\Img_k, \mcI_k)  := \bigl( \{ \Img \del_k^E \mid E \in \ti{\mcE} \}, \{\mcI^{DE}_k \mid D, E \in \ti{\mcE}, D \subseteq E \} \bigr)
\end{align*}
are, for all $k \in \Z$, direct systems over the partially ordered, chain compatible index set $\ti{\mcE}$. Since $\ti{\mcE}$ is chain compatible, $H_k(E)= \ker \del_k^E \slash \Img \del_{k+1}^E $, the $k$th (local) homoclinic Floer group of $E \in \ti{\mcE}$ (see \refnewDefLocalFH), is welldefined. Moreover, according to \refPropHomSys,
$$
H_k^{\ti{\mcE}}:=  \bigl( \{ H_k(E)= \bigl(\ker \del_k^E \slash \Img \del_{k+1}^E \bigr) \mid E \in \ti{\mcE} \}, \{H_k(\mcI^{DE}) \mid D, E \in \ti{\mcE}, D \subseteq E \} \bigr)
$$
with
$$H_k(\mcI^{DE}) \ : \ H_k(D)= \ker \del_k^D \slash \Img \del_{k+1}^D  \ \longrightarrow \  H_k(E)=\ker \del_k^E \slash \Img \del_{k+1}^E$$
is a direct system.
Note that, for all $k \in \Z$ and all $E \in \ti{\mcE}$, we have short exact sequences
$$
0 \to \Img \del^E_{k+1} \hookrightarrow \ker \del^E_k \twoheadrightarrow  \ker \del^E_k \slash  \Img \del^E_{k+1} = H_k(E) \to 0
$$
that give rise to two short exact sequences of direct systems:
$$
0 \to \Img_{k+1}^{\ti{\mcE}} \hookrightarrow \ker_k^{\ti{\mcE}} \twoheadrightarrow  \ker_k^{\ti{\mcE}} \slash  \Img _{k+1}^{\ti{\mcE}} \to 0
\quad \mbox{and} \quad
0 \to \Img_{k+1}^{\ti{\mcE}} \hookrightarrow \ker_k^{\ti{\mcE}} \twoheadrightarrow  H_k^{\ti{\mcE}} \to 0
$$
These two short exact sequences coincide if $\ti{\mcE}$ is not only partially ordered but in fact directed since then we have according to \refshortDirectLim\ and \refdirLimQuotient
\begin{align*}
\varinjlim H_k^{\ti{\mcE}} & = \varinjlim H_k (E)= \varinjlim \ \bigl( \ker \del_k^E \slash \Img \del_{k+1}^E \bigr) \\
& = \left. \bigl( \varinjlim \ker \del_k^E \bigr) \ \right/ \bigl( \varinjlim \Img \del_{k+1}^E \bigr)  = \left. \varinjlim \ker_k \right/ \varinjlim \Img_k
\end{align*}
Thus, to a directed and chain compatible set $\ti{\mcE}\subset \overline{\mcE}$, we can assign a welldefined homology via taking the direct limit:
$$
H_k(\ti{\mcE}):= \varinjlim H_k^{\ti{\mcE}} \quad \forall\ k \in \Z \qquad \mbox{and} \qquad H(\ti{\mcE}):= \bigoplus_{k \in \Z} H_k(\ti{\mcE}).
$$
\end{proof}

\begin{remark}
In the huge set $\overline{\mcE}$, there are --- or probably rather must be --- many different directed, chain compatible subsets $\ti{\mcE} \subset \overline{\mcE}$ since the complexity of the underlying homoclinic tangle cannot be caught by `just one (or maybe several) direct limits' which are in essence `quite finitely generated objects' due to their nature as direct sum.
So there is still a lot of structure to be detected and studied!
\end{remark}


\subsection{Comments on inverse limits}

In Section \ref{sec:addCutPartners}, we saw that the attempt to `add cutting points to make a $\del$-incomplete set $\del$-complete' does not (yet) work, but, in Section \ref{sec:pruning}, that the pruning algorithm works well. Pruning all cutting points that do not have cutting partners means that the set in question becomes potentially smaller. Unfortunately this does not fit well together with direct limits since the partial order of the underlying index set maps given sets into larger sets.

\vsp

Thus one may wonder about the dual procedure, i.e., working with restrictions/projections instead of inclusions and then taking the inverse limit, i.e., one would replace the inclusions $\mcI^{DE}: \Z D \to \Z E$ associated with $D, E \in \mcE$ satisfying $D \subseteq E$ by restrictions $\mcR^{E' D'}: \Z E' \to \Z D' $ associated with $E', D' \in \mcE$ with $E' \supseteq D'$. More explicitly, define the Kronecker symbol on $\mcH$ by
$$
\langle \cdot, \cdot \rangle : \mcH \x \mcH \to \{0, 1\}, \qquad \langle p, p' \rangle :=
\left\{
\begin{aligned}
1, & \quad \mbox{ if } p = p', \\
0, & \quad \mbox{ if } p \neq p'
\end{aligned}
\right.
$$
and, for $E', D' \in \mcE$ with $E' \supseteq D'$, define the restriction $\mcR^{E'D'}: \Z E' \to \Z D'$ on the generators  by
$$
\mcR^{E'D'}(p):= p-\sum_{\stackrel{p' \in E' \setminus D'}{\mu(p' ) = \mu(p)}} \langle p, p' \rangle \ p' =
\left\{ 
\begin{aligned}
p, & \quad \mbox{ if } p \in D', \\
0, & \quad \mbox{ if } p \in E' \setminus D',
\end{aligned}
\right. 
$$
and extend it by linearity. A calculation shows that, for all $D', E', F' \in \mcE$ with $F' \supseteq E' \supseteq D'$, we have $\mcR^{F'E'} \circ \mcR^{E'D'} = \mcR^{F' D'}$.
Now recall the splitting of $\del^{E'} = \del^{E'_{D'}} + \del ^{E'_{E' \setminus D'}}$ from \refcriterionChainCompat. By definition, $\mcR^{E'D'}: \Z E' \to \Z D'$ is a chain map if
\begin{equation}
\label{RchainMap}
 \mcR^{E'D'}_{k-1} \circ \del_k^{E'} = \del^{D'}_k \circ  \mcR^{E'D'}_{k}.
\end{equation}
 So, when is this the case?

\begin{lemma}[Criterion]
Let $E', D' \in \overline{\mcE}$ with $E' \supseteq D'$. Then
$$
\mcR^{E'D'}: (\Z E', \del^{E'}) \ \to \ (\Z D', \del ^{D'}) \ \mbox{ is a chain map}
\quad \IFF\quad
\del^{E'_{D'}} \mbox{ vanishes on } E' \setminus D'.
$$
\end{lemma}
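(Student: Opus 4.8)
The plan is to run the same argument as in the proof of \refcriterionChainCompat, transported to the dual setting: verify the chain-map identity \eqref{RchainMap} on the generators $p \in E'$ of $\Z E'$, splitting into the two cases $p \in D'$ and $p \in E' \setminus D'$.

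First I would record two preliminary observations. Since $E', D' \in \overline{\mcE}$, both $\del^{E'}$ and $\del^{D'}$ are the (unpruned) boundary operators of genuine chain complexes, and since $\mcR^{E'D'}$ is grading preserving while $\del$ lowers the grading by one, it suffices to verify \eqref{RchainMap} on generators. Moreover, because the coefficients $n(p,q)$ depend only on $\mcMhat(p,q)$ and not on the ambient finite set, for every $p \in D'$ one has $\del^{D'}(p) = \sum_{q \in D',\, \mu(q) = \mu(p)-1} n(p,q)\, q = \del^{E'_{D'}}(p)$; that is, $\del^{D'}$ acting on $D'$ is exactly the `$D'$-part' $\del^{E'_{D'}}$ of $\del^{E'}$.

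Next, for $p \in D'$: here $\mcR^{E'D'}(p) = p$, so the right-hand side of \eqref{RchainMap} equals $\del^{D'}(p) = \del^{E'_{D'}}(p)$. For the left-hand side I would split $\del^{E'}(p) = \del^{E'_{D'}}(p) + \del^{E'_{E'\setminus D'}}(p)$; the first summand is supported on generators in $D'$, on which $\mcR^{E'D'}$ acts as the identity, and the second on generators in $E' \setminus D'$, which $\mcR^{E'D'}$ sends to $0$, so $\mcR^{E'D'}(\del^{E'}(p)) = \del^{E'_{D'}}(p)$. Hence \eqref{RchainMap} holds automatically for $p \in D'$, with no extra hypothesis. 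For $p \in E' \setminus D'$: here $\mcR^{E'D'}(p) = 0$, so the right-hand side of \eqref{RchainMap} is $\del^{D'}(0) = 0$, while the same bookkeeping gives $\mcR^{E'D'}(\del^{E'}(p)) = \del^{E'_{D'}}(p)$ for the left-hand side. Thus \eqref{RchainMap} holds on all generators in $E' \setminus D'$ if and only if $\del^{E'_{D'}}(p) = 0$ for every such $p$, i.e.\ if and only if $\del^{E'_{D'}}$ vanishes on $E' \setminus D'$. Combining the two cases yields the stated equivalence.

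I do not expect a genuine obstacle here: the statement is the formal mirror image of \refcriterionChainCompat, and the proof is a direct computation. The only point requiring care is the bookkeeping of which terms of $\del^{E'}p$ land in $\Z D'$ and which in $\Z(E'\setminus D')$, together with the fact that the signs $n(p,q)$ are intrinsic to the pair $(p,q)$ and are therefore unchanged when passing between $\del^{E'}$, $\del^{D'}$, and $\del^{E'_{D'}}$.
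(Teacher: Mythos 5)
Your proposal is correct and follows essentially the same route as the paper: evaluate the chain-map identity on generators, treat the cases $p \in D'$ and $p \in E' \setminus D'$ separately, observe that on $D'$ both sides reduce to $\del^{E'_{D'}}(p)$ so the identity is automatic, and that on $E' \setminus D'$ it reduces precisely to the vanishing of $\del^{E'_{D'}}$. The only cosmetic difference is that you spell out explicitly that $\del^{D'}$ agrees with $\del^{E'_{D'}}$ on $D'$ because the signs $n(p,q)$ are independent of the ambient set, a point the paper leaves implicit.
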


\begin{proof}
Let $E', D' \in \overline{\mcE}$ with $E' \supseteq D'$ and consider the disjoint union $E' = D' \cup (E' \setminus D')$. When evaluating Equation \eqref{RchainMap} on $p' \in D' $ seen as point in $ E'$ we find for all $k \in \Z$
$$
\del^{D'}_k \circ \mcR_k^{E' D'}|_{D'} = \mcR_{k-1}^{E' D'} \circ \del^{E'}|_{D'} ,
$$
i.e., here the chain map property is always true.
Now we do the same with a point $p' \in E' \setminus D' $ and find for all $k \in \Z$
$$
\del^{D'}_k \circ \mcR_k^{E' D'}|_{E' \setminus D'} =0
\quad \mbox{and} \quad
 \mcR_{k-1}^{E' D'} \circ \del^{E'}|_{E' \setminus D'} = \mcR_{k-1}^{E' D'} \circ \del^{E'_{D'}}|_{E' \setminus D'}.
$$
Thus, for $\mcR^{E' D'}$ to be a chain map, we need $\mcR_{k-1}^{E' D'} \circ \del^{E'_{D'}}|_{E' \setminus D'}=0$ which is equivalent with requiring $\del^{E'_{D'}}|_{E' \setminus D'}=0$, i.e., $\del^{E'_{D'}}$ must vanish on $E' \setminus D'$.
\end{proof}

So the framework of a inverse limit can be set up analogously to the direct limit. But since the partially ordered set underlying an inverse limit proceeds by restrictions, i.e., the sets `become smaller' along the way, this is counterintuitive to what we actually want: We want to enlarge the sets and accumulate more information, not make them smaller. Thus, for this geometric-dynamical reason we did not further pursue the approach via inverse limits.






{\small
  \noindent
  \\
  Sonja Hohloch\\
  University of Antwerp\\
  Department of Mathematics\\
  Middelheimlaan 1\\
  B-2020 Antwerpen, Belgium\\
  {\em E\--mail}: \texttt{sonja DOT hohloch AT uantwerpen DOT be}

\end{document}